\newtheorem{thm}{Theorem}[section]
\newtheorem{prop}[thm]{Proposition}
\newtheorem{lem}[thm]{Lemma}
\newtheorem{cor}[thm]{Corollary}
\newtheorem{conj}[thm]{Conjecture}
\theoremstyle{definition}
\newtheorem{definition}[thm]{Definition}
\newtheorem{obs}[thm]{Observation}
\newtheorem{rmk}[thm]{Remark}
\newtheorem{conv}[thm]{Convention}
\numberwithin{equation}{section}
\newcommand{\D}{\mathcal{D}}
\newcommand{\A}{\mathcal{A}}
\newcommand{\B}{\mathcal{B}}
\newcommand{\C}{\mathcal{C}}
\newcommand{\E}{\mathcal{E}}
\newcommand{\G}{\mathcal{G}}
\newcommand{\F}{\mathcal{F}}
\newcommand{\V}{\mathcal{V}}
\newcommand{\X}{\mathcal{X}}
\newcommand{\Y}{\mathcal{Y}}
\newcommand{\W}{\mathcal{W}}
\newcommand{\Hf}{\mathcal{H}}
\newcommand{\Q}{\mathcal{Q}}
\begin{document}

%%
%% The title of the paper goes here.  Edit to your title.
%%

\title{A lower bound on tunnel number degeneration}

%%
%% Now edit the following to give your name and address:
%% 

\author{Trent Schirmer}
\address{Department of Mathematics, Oklahoma State University, 
Stillwater, OK 74078}
\email{trent.schirmer@okstate.edu}
\urladdr{www.trentschirmer.com} % Delete if not wanted.

%%
%% If there is another author uncomment and edit the following.
%%

%\author{Second Author}
%\address{Department of Mathematics, University of South Carolina,
%Columbia, SC 29208}
%\email{second@math.sc.edu}
%\urladdr{www.math.sc.edu/$\sim$second}

\begin{abstract}
We prove a theorem which bounds Heegaard genus from below under special kinds of toroidal amalgamations of $3$-manifolds.  As a consequence, we conclude $t(K_1\# K_2)\geq \max\{t(K_1),t(K_2)\}$ for any pair of knots $K_1,K_2\subset S^3$, where $t(K)$ denotes the tunnel number of $K$.
\end{abstract}

\maketitle

The tunnel number $t(K)$ of a knot $K\subset S^3$ can be defined by the equation $t(K)+1=g(S^3-\eta(K))$, where $g(\cdot )$ denotes Heegaard genus and $\eta(K)$ is an open regular neighborhood of $K$.  In more intuitive terms, the tunnel number of a knot is the minimal number of ``tunnels'' that must be drilled through $S^3-\eta(K)$ in order to make the resulting manifold a handlebody.

The behavior of $t(K)$ under the operation of connected sum has been studied extensively.  It is not difficult to see that $t(K_1\# K_2)\leq t(K_1)+t(K_2)+1$, although it takes some work to find examples where equality is achieved in this bound \cite{kobri}, \cite{morrub}, \cite{mor2}.  It is also known that $t(K_1\# K_2)<t(K_1)+t(K_2)$ for some pairs of knots \cite{mor3}, \cite{nog}, \cite{schir}, and that the degeneration $t(K_1)+t(K_2)-t(K_1\# K_2)$ can be arbitrarily large \cite{kob}, \cite{schir}.

Perhaps most difficult is the task of finding lower bounds on $t(K_1\# K_2)$.  It is known that $t(K_1\# K_2)\geq 2$ for any pair of non-trivial knots in $S^3$ \cite{norwood}, and more generally that $t(K_1\# \cdots \# K_n)\geq n$ \cite{schulschar2}.  In the case that $K_1$ and $K_2$ are small, it is known that $t(K_1\# K_2)=t(K_1)+t(K_2)$ \cite{morimoto-schultens} (the hard part of this is to show that $t(K_1\# K_2)\geq t(K_1)+t(K_2)$), in fact this equation holds even under the assumption that $K_1$ and $K_2$ are meridionally small \cite{kobayashi-rieck}.  Another impressive lower bound is $\frac{t(K_1\# K_2)}{t(K_1)+t(K_2)}\geq \frac{2}{5}$ \cite{schulschar}, which holds for any pair of non-trivial knots in $S^3$ (in fact a more general analogue involving iterated connected sums is derived in \cite{schulschar}).

In this paper, we prove that $t(K_1\# K_2)\geq \max\{t(K_1),t(K_2)\}$ for any pair of knots $K_1,K_2\subset S^3$.  This bound was previously unknown, although there are many examples which show it to be best possible, including those of \cite{mor3} and \cite{nog}.  The proof is centered around the rather involved construction of so-called {\em doppelg\"{a}nger} surfaces.

In rough outline, the strategy of our proof is as follows.  Suppose without loss of generality that $\max\{t(K_1),t(K_2)\}=t(K_2)$ and let $K_1\# K_2$ be realized via the satellite construction with $K_1$ as the companion and $K_2$ as the pattern.  This means that $K_1\# K_2$ lies in $V=\overline{\eta(K_1)}$, and if $h:V\rightarrow S^3$ is the standard unknotted embedding of the solid torus $V$, then $h(K_1\# K_2)=K_2$.  If $\G$ is a thin generalized Heegaard surface of $S^3-\eta(K_1\# K_2)$, then $\G$ can be isotoped to intersect $\overline{S^3-V}$ in a particularly nice way.  Taking into account certain information contained in the intersection $\G\cap(\overline{S^3-V})$, we can then construct the previously mentioned {\em doppelg\"{a}nger surface} $\Q$ inside of a solid torus $W=\overline{S^3-h(V)}$ which, in certain important respects, imitates the placement of the surface $\G\cap (\overline{S^3-V})$ in $(\overline{S^3-V})$.  As a result, $\Q\cup h(\G\cap V)$ forms a generalized Heegaard surface of $\overline{S^3-h(V)}=S^3-\eta(K_2)$ which amalgamates to a surface of lower genus than the amalgamation of $\G$.  This yields the desired lower bound. 

In Section 1 we introduce the notion of a {\em generalized compression body}, which form the basic pieces of $\overline{S^3-V}-\G$ and $W-\Q$, and prove a series of essential cutting and pasting lemmas about them.  Section 2 then describes and works out the basic topology of so-called {\em spoke graphs} and {\em spoke surfaces}, which form the building blocks of the doppelg\"{a}nger surface $\Q$. Section 3 then constructs $\Q$ in detail and proves that it has the desired properties, culminating in the main technical result of the paper, Theorem 3.23.  In Section 4 the bound $t(K_1\# K_2)\geq \max\{t(K_1),t(K_2)\}$ is proved (Theorem 4.1), and some topics related to it are briefly discussed.

Throughout this paper, $N(Y,X)$ denotes a {\em closed} regular neighborhood of $Y$ in $X$, $E(Y,X)=\overline{X-N(Y,X)}$, and $Fr(Y,X)=N(Y,X)\cap E(Y,X)$, or equivalently, $Fr(Y,X)=\overline{\partial N(Y)-\partial X}$. We assume throughout that $N(Y,X)$ behaves well with respect to intersection, so that $N(Y_1,X)\cap N(Y_2,X)=N(Y_1\cap Y_2, X)$.  If $\X$ is a topological space, $|\X|$ denotes the number of components of $\X$.  An embedding of manifolds $f:X\rightarrow Y$ is said to be {\em proper} if $f$ is transverse to $\partial Y$ and $f(\partial X)\subset \partial Y$. A {\em proper isotopy} is a homotopy through proper embeddings (note that this does {\em not} imply that the boundary remains fixed).  As an informal aid to the reader, topological spaces which are allowed to have multiple connected components will usually be denoted in calligraphic font, e.g. $\A$, $\X$, $\Y$, whereas connected topological spaces will usually be denoted in standard font, e.g. $A$, $X$, $Y$.

\section{Generalized compression bodies}

\begin{definition}

Let $\F$ be a compact orientable surface, and let $\V=(\F\times I)\cup(2-handles)\cup (3-handles)$, where the $2$-handles are attached along essential, non-boundary parallel curves in $\F\times\{0\}$, and $3$-handles are attached along all spherical components of $\F\times I\cup (2-handles)$ which are disjoint from $\F\times \{1\}$.  Then $\V$ is called a {\em generalized compression body over $\F$}, or simply a {\em generalized compression body}.  Let $\partial_+\V=\F\times\{1\}$, $\partial_v\V=(\partial \F)\times I$, and $\partial_-\V=\overline{\partial \V-(\partial_+\V\cup \partial_v\V)}$.  If $\V$ is connected and $\partial_v\V=\emptyset$, $\V$ is a {\em compression body}.  If $\V$ is connected and $\partial_-\V=\emptyset$, $\V$ is a {\em handlebody}.

\end{definition}

%One subtlety to be aware of is that the same $3$-manifold $\V$ can be viewed as a generalized compression body over several distinct subsurfaces of $\partial\V$, so $\partial_+\V$ must be specified.  Note also that since all $2$-handles are attached along essential, non boundary-parallel curves of $\F\times\{0\}$, no component of $\partial_-\V$ can be a disk.

\begin{obs}

Suppose $\V$ is a generalized compression body, $A_1,A_2$ are disjoint components of $\partial_v\V$, and $h:A_1\rightarrow A_2$ is an orientation reversing homeomorphism which preserves $\partial_+\V$.  Then $\V/h$ is a generalized compression body over $(\partial_+\V)/h$.

\end{obs}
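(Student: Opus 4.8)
The plan is to reduce to the case in which $h$ respects the product structure on the vertical annuli, and then to observe that the gluing affects only the $\F\times I$ part of $\V$, turning it into $\F'\times I$ for the surface $\F'=(\partial_+\V)/h$ while leaving the $2$- and $3$-handles untouched.

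First I would fix notation. Since $A_1$ and $A_2$ are components of $\partial_v\V=(\partial\F)\times I$, we may write $A_i=C_i\times I$ where $C_1\neq C_2$ are boundary circles of $\F$. That $h$ preserves $\partial_+\V$ means $h(C_1\times\{1\})=C_2\times\{1\}$, and hence also $h(C_1\times\{0\})=C_2\times\{0\}$. Identifying $A_1$ and $A_2$ with $S^1\times I$ so as to match their $\{1\}$-ends, $h$ becomes a self-homeomorphism of $S^1\times I$ fixing each boundary circle setwise, and any such homeomorphism is isotopic either to the identity or to the reflection in the $S^1$-factor (the Dehn twist being isotopically trivial once the boundary is allowed to move). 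In either case $h$ is isotopic to a \emph{level-preserving} homeomorphism $h(x,t)=(h_0(x),t)$ for some $h_0\colon C_1\to C_2$; as an isotopy of $h$ leaves the homeomorphism type of $\V/h$ unchanged, I may assume henceforth that $h$ is level-preserving.

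The $2$-handles of $\V$ are attached along curves in the interior of $\F\times\{0\}$, and the $3$-handles along spheres disjoint from $\F\times\{1\}$, so all of their attaching regions are disjoint from $A_1\cup A_2\subset(\partial\F)\times I$. Thus $\V/h$ is obtained from $(\F\times I)/h$ by attaching the very same $2$- and $3$-handles, and since $h$ is level-preserving, $(\F\times I)/h=\F'\times I$, where $\F'=\F/(x\sim h_0(x),\ x\in C_1)$. The surface $\F'$ is compact, and it is orientable: a check with the usual boundary-orientation conventions shows that $h$ being orientation-reversing forces $h_0$ to reverse the boundary orientations induced on $C_1$ and $C_2$, which is precisely the condition for the quotient surface to be orientable. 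Under the evident identification $\F'\times\{1\}=(\partial_+\V)/h$, it remains only to check that the presentation
\[
\V/h=\F'\times I\ \cup\ (2\text{-handles})\ \cup\ (3\text{-handles})
\]
satisfies the definition of a generalized compression body over $\F'$: that the $2$-handle attaching curves are still essential and non-boundary-parallel in $\F'\times\{0\}$, and that the $3$-handles are still attached along all spherical boundary components of $\F'\times I\cup(2\text{-handles})$ that are disjoint from $\F'\times\{1\}$.

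I expect these last verifications to be the main, if routine, obstacle. Both come down to the fact that $A_1$ and $A_2$ lie in $\partial_v\V$, hence are disjoint from the attaching regions of all handles and from every spherical boundary component of $\F\times I\cup(2\text{-handles})$ (those spheres being closed surfaces contained in the compressed copy of $\F\times\{0\}$). Consequently the gluing neither destroys these spheres nor changes whether they meet the top surface, and an Euler-characteristic count---using that $A_1,A_2$ sit in non-spherical boundary components---shows no new spherical boundary components are created, giving the $3$-handle condition. For the curve condition, if an attaching curve $\gamma$ (taken disjoint from the image circle $C$ of $C_1=C_2$) bounded a disk, or cobounded an annulus with a component of $\partial\F'$, in $\F'$, then since $C$ is disjoint from $\gamma$ and from $\partial\F'$ the disk or annulus meets $C$ only in circles; an innermost-circle analysis, combined with the fact that cutting $\F'$ along $C$ recovers $\F$ with $C$ becoming $C_1\sqcup C_2$, reduces to the case where the disk or annulus is disjoint from $C$ or contains a subdisk bounded by $C$, and in every case one contradicts the essentiality or the non-boundary-parallelism of $\gamma$ in $\F$---here using crucially that $C_1\neq C_2$. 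Granting these checks, every clause of the definition holds, so $\V/h$ is a generalized compression body over $(\partial_+\V)/h$.
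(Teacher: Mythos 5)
The paper treats this statement as evident and provides no proof, so there is nothing to compare against; your reduction to a level-preserving gluing, the identification $(\F\times I)/h\cong\F'\times I$, and the observation that the $2$- and $3$-handles carry over untouched is exactly the intended picture, and those parts are fine.

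The gap lies in the final verifications, which you rightly flag as the main obstacle but then sketch too optimistically. In the ``subdisk bounded by $C$'' branch of your innermost-circle argument there is in fact no contradiction: cutting $\F'$ along $C$ sends such a subdisk to a disk in $\F$ bounded by $C_1$ or $C_2$, i.e.\ to a disk component of $\F$, and this can genuinely occur. For instance, let $\F=D_0\sqcup\Sigma_{1,2}$ with $\partial D_0=C_1$ and $\partial\Sigma_{1,2}=C_2\cup C_3$, and let $\gamma\subset\Sigma_{1,2}$ be a separating curve cutting off a once-punctured torus from a pair of pants containing $C_2\cup C_3$. Then $\gamma$ is essential and non-boundary-parallel in $\F$, but after gluing $C_1\times I$ to $C_2\times I$ the disk $D_0$ caps $C_2$, so $\gamma$ becomes boundary-parallel in $\F'\cong\Sigma_{1,1}$; one checks directly that $\partial_-(\V/h)$ then contains a closed torus component, something no generalized compression body over $\Sigma_{1,1}$ can have. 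Your $3$-handle claim has an independent defect: ``no new spheres are created because $A_1,A_2$ sit in non-spherical boundary components'' is false even when that hypothesis holds. Take $\F=\Sigma_{1,2}$ and attach $2$-handles along two disjoint, non-parallel, non-separating curves $\gamma_1,\gamma_2$ cutting $\F$ into two pairs of pants, one containing each $C_i$. Then $\partial_-\V$ is a pair of disks and $\partial\V$ is a single torus (not a sphere), yet gluing $C_1\times I$ to $C_2\times I$ joins those two disks into a sphere boundary component of $\V/h$ bearing no $3$-handle, so $\V/h$ is not a generalized compression body over $\F'=\Sigma_{2,0}$.

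Both failures trace to the same omission: the statement needs the standing hypothesis that neither $C_1$ nor $C_2$ bounds a disk in $\partial_+\V$ or in $\partial_-\V$ (equivalently, that no component of $\partial_-\V$ or $\partial_+\V$ adjacent to $A_1$ or $A_2$ is a disk). In the paper this always holds, because the vertical annuli to which the Observation is applied arise as frontiers of \emph{incompressible} annuli, whose boundary curves cannot bound disks in $\partial_\pm\V$. Under that hypothesis your ``subdisk'' case is vacuous, the sphere-creation scenario is excluded, and the rest of your argument closes; as written, however, the two claimed contradictions at the end of the proof do not exist, and the statement as literally phrased is false.
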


%\begin{proof}
%The description $\V\cong \partial_+\V)\times I\cup \{2-handles\}\cup \{3-handles\}$ induces parameterizations $A_1\cong S^1\times I\cong A_2$, and our assumption on $h$ ensures that it is isotopic to the identity with respect to these parameterizations.  Once this is done, $\V/h$ inherits the description $(\partial_+\V/h)\times I\cup\{2-handles\}\cup\{3-handles\}$.  Since no component of $\partial_-\V$ is a disk, no component of $(\partial_-\V)/h=$ is a sphere, and the lemma follows.
%\end{proof}

%Lemma 1.2 allows us to glue together the components of $\partial_v\V$ (and cap one off with $D_2\times I$ in the case that $|\partial_v\V|$ is odd) to obtain a compression body $\V'$.  This allows most of the elementary facts known about compression bodies to be carried over to generalized compression bodies, although we will only develop the part of the theory we need here.

\begin{obs}

If $\V$ is a generalized compression body and $\W$ is obtained by compressing $\V$ along a properly embedded disk $D$ such that $\partial D\subset \partial_+\V$, then $\W$ is again a generalized compression body.  Going the other way, if $\W$ is obtained from $\V$ by attaching an oriented $1$-handle along $\partial_+\V$, then $\W$ is a generalized compression body.

\end{obs}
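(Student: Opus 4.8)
The plan is to extract both assertions from the handle structure built into the definition, so I first isolate the mechanism I will use. By definition a generalized compression body $\V$ over $\F$ is assembled on $\partial_+\V\times I=\F\times I$ from $2$- and $3$-handles; turning this decomposition upside down, every component of $\V$ is built from a disjoint union of a product $\partial_-\V\times I$ (whose spherical components have all been capped by $3$-handles, so there are none) and finitely many balls, by attaching $1$-handles along $\partial_-\V\times\{1\}$ and the ball boundaries. I move between these two descriptions as convenient. The recurring computation is this: if $h$ is a $1$-handle attached to $\F\times\{1\}$, then $(\F\times I)\cup h$ is homeomorphic, via a homeomorphism that is the identity on $\F\times\{0\}$, to $(\F'\times I)\cup b$, where $\F'$ is $\F$ with a tube, $b$ is a $2$-handle attached along the belt circle $\gamma$ of $h$ viewed as a curve in $\F'\times\{0\}$, and surgering $\F'\times\{0\}$ along $\gamma$ recovers $\F$. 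Here $\gamma$ is essential and non-boundary-parallel in $\F'$ unless a component of $\F$ meeting $h$ is a disk or a sphere, and in that exceptional case that component of the ambient manifold is a ball whose role is simply to be absorbed by $h$.

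For the second assertion, write $\V=(\F\times I)\cup\beta\cup\tau$, with $\beta$ the $2$-handles attached along essential, non-boundary-parallel curves $c_1,\dots,c_m$ in $\F\times\{0\}$ and $\tau$ the $3$-handles. Applying the computation to the product factor, $(\F\times I)\cup h\cong(\F'\times I)\cup b$ with $\F'=\partial_+\W$; because the homeomorphism fixes $\F\times\{0\}$, the curves $c_i$ are carried to the same curves, and after isotoping them off the two cap disks of the $\gamma$-surgery they lie in $\F'\times\{0\}$, disjoint from $\gamma$, where they remain essential and non-boundary-parallel. Thus $\W\cong(\F'\times I)\cup b\cup\beta\cup\tau$, with all $2$-handles attached along $\F'\times\{0\}$ and the $3$-handles capping precisely the spherical components missed by $\partial_+\W$; this is the presentation of $\W$ as a generalized compression body over $\F'$. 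In the exceptional case the component of $\V$ met by $h$ is a ball $B$, and $B\cup h$, reattached to the rest of $\V$ along a disk of $\partial_+\V$, is homeomorphic to the rest of $\V$, so $\W$ is again a generalized compression body over $\partial_+\W$.

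For the first assertion, if $\partial D$ is inessential in $\partial_+\V$ then, since $\V$ is irreducible (each component being a handlebody or a generalized compression body whose negative boundary has no spherical components), $D$ is boundary parallel and $\W\cong\V\sqcup B^3$, which is a generalized compression body. If $\partial D$ is essential I induct on the number $n$ of $1$-handles in the upside-down decomposition of $\V$, noting $n\ge 1$ because a product-plus-balls has incompressible positive boundary. Let $b$ be the cocore of one of those $1$-handles $h$, so that $\V$ cut along $b$ is a generalized compression body $\V'$ with $n-1$ $1$-handles and $\partial_+\V$ is $\partial_+\V'$ retubed along $\partial b$. By an innermost-disk argument I remove the circles of $D\cap b$, and by an outermost-arc argument — surgering $D$ along outermost subdisks of $b$, and where necessary replacing $b$ by a band sum of a subdisk of $b$ with an outermost subdisk of $D$ (equivalently, re-choosing which $1$-handle to excise) — I remove the arcs of $D\cap b$ as well; I also slide $\partial D$ off the two feet of $h$, so that $D$ is disjoint from $h$ altogether. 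Then $D$ lies in $\V'$, so $\V'$ cut along $D$ is a generalized compression body by the inductive hypothesis, and $\W=\V$ cut along $D$ is obtained from it by reattaching $h$ as a $1$-handle along its positive boundary; by the second assertion $\W$ is then a generalized compression body, and tracking the surgered surfaces shows $\partial_+\W=\partial_+\V$ surgered along $\partial D$.

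The genuine content, and the step I expect to be most delicate, is the reduction of $D\cap b$ to the empty set inside the induction: clearing circles is routine, but clearing arcs requires surgering the belt disk and then checking that the result is still the cocore of some $1$-handle structure on $\V$, and it requires keeping careful track of which component of the cut-open manifold each foot of $h$ lands on — the very bookkeeping that, for the separating disk of a genus-two handlebody, is what makes the compression split into two solid tori rather than into a handlebody and a ball. The remaining points are routine: dispatching the recurring degenerate cases — disk or sphere components of the defining surface, and belt circles that happen to be inessential or boundary parallel — and checking that the presentations produced along the way literally meet the definition of a generalized compression body.
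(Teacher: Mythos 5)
The paper offers no argument for this statement: it is labeled an Observation (like Observation 1.2) and treated as a standard fact, so there is no written proof to compare against. Judged on its own merits, your handle-duality proof of the second assertion is essentially correct and is the standard picture; the small caveat is that your exceptional-case bookkeeping is slightly off — if both feet of $h$ lie on a single disk or sphere component of $\F$, the belt circle $\gamma$ is \emph{still} essential and non-boundary-parallel in $\F'$, and the degenerate cases ($\gamma$ inessential or $\partial$-parallel) occur only when $h$ joins two distinct components, one of which is a disk or sphere. That is harmless, but it is worth getting right since the definition is strict about which curves may carry $2$-handles.

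The genuine gap is in the first assertion, and it is exactly the step you flag as ``most delicate.'' As written, ``surgering $D$ along outermost subdisks of $b$'' is not a legal move: surgering $D$ along a subdisk of $b$ replaces $D$ by two different disks, and the manifold you are trying to control is $E(D,\V)$, not $E(D_1,\V)$ or $E(D_2,\V)$; the disk $D$ must be preserved up to isotopy. The only allowable operation is the one you mention second — replacing $b$ (equivalently, the cocore system) by the result of surgering it along an outermost subdisk of $D$ — and the real content of the observation is the verification that this surgered disk is again the cocore of \emph{some} $1$-handle in \emph{some} upside-down decomposition of $\V$ with $\partial_-$, the balls, and $\partial_v\V$ unchanged, so that the inductive hypothesis applies. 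Equivalently, one needs that every compressing disk of $\partial_+\V$ can be made disjoint from a complete meridian system of $\V$ by disk swaps on the system alone. You correctly identify this as the crux (and you correctly foresee that tracking which side of the cut-open manifold the two feet of $h$ land on is where, e.g., a separating disk in a genus-two handlebody produces two solid tori), but the argument is not actually carried out, and the first clause of the outermost-arc step would, if taken literally, lead to a proof of a statement about the wrong disk. Since the proposition is stated for a \emph{fixed} $D$, this is a step that would fail as written rather than merely an omitted routine computation.
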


\begin{figure}
    \centering
    \includegraphics[width=0.8\textwidth]{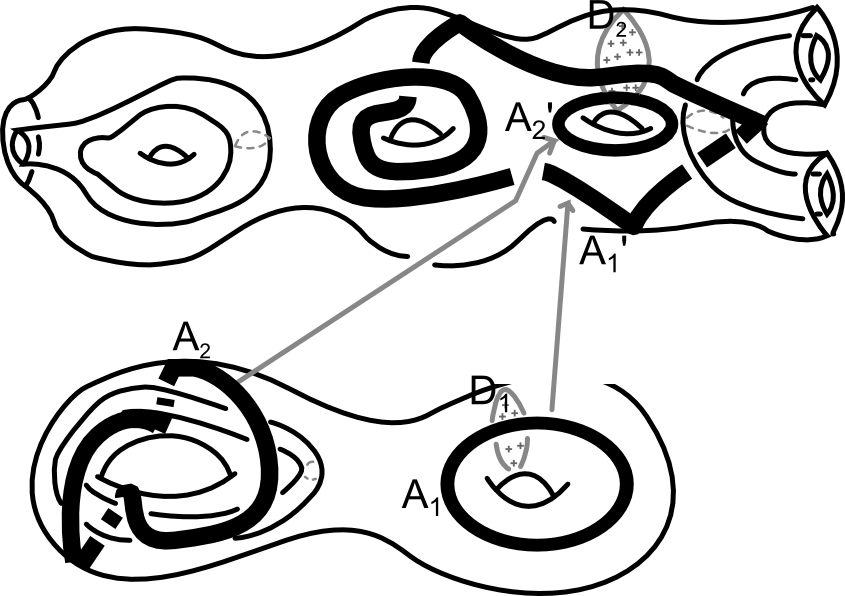}
    \caption{Primitive disk set for a paired union of annuli}
\end{figure}

\begin{definition}

Let $\V$ be a generalized compression body and let $\A=A_1\cup\cdots \cup A_n$ and $\A'=A_1'\cup \cdots \cup A_n'$ be disjoint unions of annuli embedded in $\partial_+\V$ satisfying $\A\cap \A'=\emptyset$.  Let $\D=D_1\cup \cdots \cup D_n$ be a disjoint union of compressing disks for $\V$ such that $\partial\D\subset \partial_+\V$.  If $D_i\cap (A_i\cup A_i')$ consists of a single spanning arc in one of $A_i$ or $A_i'$ for all $1\leq i\leq n$, and $D_i\cap (A_j\cup A_j')=\emptyset$ if $j>i$, then $\D$ is said to be a {\em primitive disk set} for $\A\cup \A'$, and the component of $A_i\cup A_i'$ which meets $D_i$ is said to be {\em dual} to $D_i$.  The above orderings of the components of $\A\cup \A'$ and $\D$ will be called the {\em primitive ordering} associated with $\D$.  See Figure 1.

\end{definition}

\begin{rmk}

The choice of primitive ordering is essential to Definition 1.4, and some fixed choice is always assumed to be present when we are dealing with a primitive disk set $\D$ for a paired union $\A\cup \A'$ of annuli.  For the most part, however, the primitive ordering will only be specified explicitly when necessary.

\end{rmk}

\begin{prop}

Let $\V$ be a generalized compression body and let $\A=A_1\cup\cdots \cup A_n$ and $\A'=A_1'\cup \cdots \cup A_n'$ be disjoint unions of annuli embedded in $\partial_+\V$ satisfying $\A\cap \A'=\emptyset$.  Let $h:\A\rightarrow \A'$ be an orientation reversing homeomorphism such that $h(A_i)=A_i'$ for all $1\leq i\leq n$, and suppose that $\A\cup \A'$ admits a primitive disk set.  Then $\V/h$ is a generalized compression body over $(\partial_+\V)/h$.

\end{prop}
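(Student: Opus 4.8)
The plan is to induct on $n$, the number of annulus pairs, peeling off the identifications one at a time in the order dictated by the primitive ordering associated with $\D$. The base case $n=0$ is vacuous, since then $h$ is the empty map and $\V/h=\V$. For the inductive step, I would like to first quotient by $h|_{A_n}:A_n\to A_n'$, check that the result is a generalized compression body, and then recognize that the remaining data $(A_1\cup\cdots\cup A_{n-1})\cup(A_1'\cup\cdots\cup A_{n-1}')$ together with $D_1\cup\cdots\cup D_{n-1}$ and $h|_{A_1\cup\cdots\cup A_{n-1}}$ satisfies the hypotheses of the proposition inside the new generalized compression body, so the inductive hypothesis finishes the job.

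The key point in the single-step reduction is to understand the effect of gluing $A_n$ to $A_n'$. Here is where the primitive disk $D_n$ does the work: since $D_n$ meets exactly one of $A_n,A_n'$ — say $A_n$, its dual — in a single spanning arc, and is disjoint from $A_j\cup A_j'$ for $j<n$ (by the primitive-ordering condition $D_i\cap(A_j\cup A_j')=\emptyset$ for $j>i$, applied with the roles reversed — i.e. $D_n$ is disjoint from all lower-indexed annuli, and the higher-indexed disks are irrelevant since there are none), the disk $D_n$ survives as a properly embedded disk in $\V/h$ with $\partial D_n\subset\partial_+(\V/h)$, and the curve $\partial D_n$ runs across the glued-up annulus exactly once. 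Cutting $\V/h$ along $D_n$ should recover $\V$ with the annulus pair $A_n\cup A_n'$ removed from $\partial_+$: the spanning arc of $D_n$ in $A_n$, after the identification, becomes an arc that, when cut, "unrolls" the handle created by the gluing. Dually, and more cleanly, one observes that attaching the $1$-handle dual to $D_n$ to $\V$ along the two ends of the core arc of $A_n$ produces exactly $\V/h$ (the $1$-handle being a thickened version of what the identification creates); by Observation 1.3, attaching an oriented $1$-handle along $\partial_+\V$ yields a generalized compression body, and its $\partial_+$ is $(\partial_+\V)$ with a handle attached, which one checks is $(\partial_+\V)/(h|_{A_n})$. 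The orientation-reversing hypothesis on $h$ is what guarantees the handle is attached in an orientable fashion, so the result really is a generalized compression body and not something non-orientable.

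After this single identification, the remaining annuli $A_1,\ldots,A_{n-1},A_1',\ldots,A_{n-1}'$ still sit in $\partial_+(\V/h)$ (they were disjoint from $A_n\cup A_n'$, hence unaffected), the disks $D_1,\ldots,D_{n-1}$ are still compressing disks with boundary in $\partial_+$ (disjoint from $A_n\cup A_n'$, hence from the handle region, so they persist), and the conditions defining a primitive disk set — $D_i$ meeting $A_i\cup A_i'$ in one spanning arc, $D_i$ disjoint from $A_j\cup A_j'$ for $j>i$ — are inherited verbatim for indices $1\le i\le n-1$. Thus $(\V/h)$ with this truncated data satisfies the hypotheses of the proposition for $n-1$ pairs, and $(\V/h)\big/\big(h|_{A_1\cup\cdots\cup A_{n-1}}\big)=\V/h$ is a generalized compression body over $\big(\partial_+(\V/h)\big)\big/h=(\partial_+\V)/h$ by induction. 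I expect the main obstacle to be the bookkeeping in the single-step claim — precisely verifying that cutting $\V/h$ along $D_n$ returns $\V$ (equivalently, that $\V/h$ is $\V$-plus-a-$1$-handle), keeping careful track of which component of $A_n\cup A_n'$ is dual and how the spanning arc interacts with the identification — rather than anything in the inductive packaging, which is routine once the reduction step is nailed down.
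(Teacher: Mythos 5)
Your overall scaffolding matches the paper exactly: induct on $n$, use the primitive ordering to peel off $A_n\cup A_n'$ first, observe that the truncated data $(A_1\cup\cdots\cup A_{n-1},\ A_1'\cup\cdots\cup A_{n-1}',\ D_1\cup\cdots\cup D_{n-1})$ survives into $\V/h|_{A_n}$, and finish by induction. That packaging is fine. The gap is in the single-step claim.

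You assert two closely related things, both of which fail: that cutting $\V/h|_{A_n}$ along $D_n$ recovers $\V$, and, ``dually and more cleanly,'' that $\V/h|_{A_n}$ is $\V$ with a $1$-handle attached. Neither is true, and the quickest way to see it is Euler characteristic. Identifying the annulus $A_n$ with $A_n'$ satisfies $\chi(\V/h|_{A_n})=\chi(\V)-\chi(A_n)=\chi(\V)$, whereas a $1$-handle attachment gives $\chi(\V)-1$. The operation that the identification actually performs is attaching a solid torus $A\times I$ to $\V$ along the two annuli $A\times\partial I$, not a $1$-handle. Relatedly, $D_n$ is not even properly embedded in $\V/h|_{A_n}$: the spanning arc $D_n\cap A_n$ lies in the interior of the quotient (the glued annulus gets a bicollar and moves off the boundary), so ``cut along $D_n$'' is not a legal move there without modification.

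The paper's proof repairs exactly this by splitting the identification into three $\chi$-balanced moves rather than one. First compress $\V$ along $D_n$ to get $E(D_n,\V)$, a generalized compression body by Observation 1.3; this uses primitivity, since $D_n$ is a compressing disk with $\partial D_n\subset\partial_+\V$. Second, reattach the ball $N(D_n)$ to $E(D_n,\V)$ not in its original position but shifted via $h|_{N(D_n\cap A_n,\,A_n)}$, which carries out the identification over the rectangular neighborhood of the spanning arc; this is a trivial ball attachment along a disk in $\partial_+E(D_n,\V)$ and leaves the GCB property intact. Third, the rest of the identification (over $A_n$ minus the rectangle) is now a genuine $1$-handle attachment to $\partial_+\V'$, handled again by Observation 1.3. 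The net $\chi$ change $+1+0-1=0$ matches the quotient. So the role of $D_n$ is not to make the quotient a $1$-handle attachment, but to provide the spanning arc along which the solid-torus gluing can be unrolled into a ball plus a $1$-handle. Once you make that correction, the rest of your argument (inheriting the truncated primitive disk set, the induction, and the remark about orientation-reversal) goes through.
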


\begin{proof}

We proceed by induction on $n$.  In the base case $n=0$ there is nothing to prove. If $n>0$, suppose without loss of generality that $A_n$ is dual to $D_n$ (the argument is the same if $A_n'$ is dual $D_n$).  By Observation 1.3, $E(D_n,\V)$ is again a generalized compression body, and the result of reattaching $N(D_n)$ to $E(D_n,\V)$ via the map $h|_{N(D_n\cap A_n,A_n)}$ again results in a generalized compression body $\V'$, since this amounts to trivially attaching a ball to $\partial_+ E(D_n,\V)$ along a disk on its boundary.  Since $\V/h|_{A_n}$ is obtained from $\V'$ via a $1$-handle attachment along $\partial_+\V'$, Observation 1.3 tells us that $\V/h|_{A_n}$ is a generalized compression body.  Since $D_1\cup \cdots\cup D_{n-1}$ was disjoint from $A_n\cup A_n'$ and $D_n$, it remains a primitive disk set for $A_1\cup\cdots \cup A_{n-1}$ and $A_1'\cup\cdots \cup A_{n-1}'$ in $\V/h|_{A_n}$, and the desired conclusion follows by induction. 

\end{proof}

\begin{prop}

Suppose $\V$ is a generalized compression body, let $\A=A_1\cup\cdots\cup A_n$ be a disjoint union of annuli embedded in $\partial_+\V$, and let $\D=D_1\cup\cdots \cup D_n$ be a disjoint union of disks properly embedded in $\V$ such that $\partial \D\subset\partial_+\V$.  If $D_i\cap A_i$ consists of a single spanning arc in $A_i$ for all $1\leq i\leq n$, and $D_i\cap A_j=\emptyset$ whenever $i<j$, then manifold $\W$ obtained by attaching 2-handles along $\A$ is again a generalized compression body.

\end{prop}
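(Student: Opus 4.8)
The plan is to induct on $n$, exactly paralleling the proof of Proposition 1.6, but replacing the regluing operation with a $2$-handle attachment. In the base case $n=0$ there is nothing to prove. For the inductive step with $n>0$, I would focus on the ``last'' annulus $A_n$, which by hypothesis is dual to $D_n$ and disjoint from all of $D_1,\dots,D_{n-1}$ (since $D_i\cap A_n=\emptyset$ whenever $i<n$).

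First I would compress $\V$ along $D_n$: by Observation 1.3, $E(D_n,\V)$ is again a generalized compression body, and $\partial_+E(D_n,\V)$ is obtained from $\partial_+\V$ by surgering along $\partial D_n$. The key local picture is that $A_n$ met $D_n$ in a single spanning arc, so after the compression the annulus $A_n$ becomes a disk $D_n'$ properly embedded in $\partial_+E(D_n,\V)$ --- equivalently, attaching the $2$-handle along $A_n$ and then compressing along $D_n$ has the same effect as attaching $N(D_n)$ back trivially as a ball glued along a disk in $\partial_+$, which changes nothing up to homeomorphism. I would make this precise by observing that attaching a $2$-handle along an annulus $A_n$ that is cut once by a compressing disk $D_n$ is, after sliding, the same as attaching a $2$-handle along the disk $D_n$ itself (cancelling it) together with a trivial $1$-handle (or, dually, no handle at all) --- so the resulting manifold is $E(D_n,\V)$ with a trivial ball attached, hence still a generalized compression body $\V'$. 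Then the remaining $2$-handle attachments along $A_1,\dots,A_{n-1}$ are performed on $\V'$, and since $D_1\cup\cdots\cup D_{n-1}$ survived the compression (being disjoint from $D_n$) and still meets the $A_i$'s in the prescribed way, the hypotheses of the proposition hold for $\V'$ with $n-1$ annuli, and the conclusion follows by induction.

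The main obstacle I expect is making the ``handle cancellation'' step genuinely rigorous rather than heuristic: one must verify that the $2$-handle attached along $A_n$ and the $1$-handle dual to the compressing disk $D_n$ form a cancelling pair, using that $A_n\cap D_n$ is a single spanning arc, and then check that after cancellation the leftover piece is precisely $E(D_n,\V)$ with a trivially attached $3$-ball along a boundary disk (which by the definition of generalized compression body contributes nothing). A clean way to organize this is to work in $\V$ directly: $N(D_n)\cup N(A_n)$ is a collar neighborhood of a disk with a band, which is a $3$-ball meeting $\partial_+\V$ in an annulus neighborhood of $\partial D_n$ together with a disk, and excising it and regluing realizes $\W$'s relevant piece as a compression of $\V$ along $D_n$. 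I would also need to confirm the bookkeeping that $\partial_+\W$ is the correct surface, namely $(\partial_+\V)$ surgered along the cores of all the annuli $A_i$, which follows inductively. The disjointness condition $D_i\cap A_j=\emptyset$ for $i<j$ is exactly what guarantees that peeling off one annulus at a time does not disturb the structure needed for the others, so no genericity or general-position argument beyond what is already in Observation 1.3 should be required.
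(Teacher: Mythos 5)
Your approach is correct and matches the paper's, which simply notes that Proposition 1.7 follows by essentially the same inductive argument as Proposition 1.6; your handle-cancellation formulation---the $1$-handle $N(D_n)$ dual to the compression along $D_n$ cancels the $2$-handle attached along $A_n$ because $\partial D_n$ meets the core of $A_n$ in a single transverse point, so the result is homeomorphic (respecting $\partial_\pm$, $\partial_v$) to $E(D_n,\V)$---is a faithful and careful way to carry out that analogy. One small slip: $N(D_n)\cup N(A_n)$ is a solid torus rather than a ball; the ball in question is $N(D_n)$ together with the attached $2$-handle itself, but this does not affect the cancellation argument.
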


\begin{proof}

The proposition is well known in the case the $\V$ is a compression body.  The proof in the general case here is essentially the same as that of Proposition 1.6.

\end{proof}

\begin{definition}

An annulus $A$ properly embedded in a generalized compression body $V$ is said to be {\em spanning} if one component of $\partial A$ lies on $\partial_+V$ and the other lies on $\partial_-V$.  $A$ is said to be {\em horizontal} if $\partial A\subset \partial_+V$.

\end{definition}

%\begin{lem} (Saito-Schar-Schultens)
%Suppose $\A$ is a disjoint union of incompressible spanning annuli embedded in a compression body $\V$.  Then there exists a disjoint union $\D$ of compressing disks for $\V$ such that $E(\D,\V)\cong \partial_+\V\times I$ and $\D\cap \A=\emptyset$.
%\end{lem}

\begin{prop}

If $F$ is a compact surface and $\A$ is a disjoint union of incompressible spanning annuli embedded in $F\times I$, then $F\times I$ can be re-parameterized so that $\A=\C\times I$ for some disjoint union of essential simple closed curves $\C\subset F$.

\end{prop}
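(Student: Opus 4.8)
The plan is to reduce to a single annulus and then lean on the classical fact that an incompressible surface in an $I$-bundle can be straightened. First I would induct on $n=|\A|$, the case $n=0$ being vacuous. For $n\geq 1$, assume we can handle a single incompressible spanning annulus; straighten $A_1$, so that after re-parameterizing $F\times I$ (preserving $F\times\{0\}$ and $F\times\{1\}$) we have $A_1=c_1\times I$ for a simple closed curve $c_1\subset F$, necessarily essential since $A_1$ is incompressible. The remaining annuli are disjoint from $A_1$, hence lie in $E(c_1\times I,F\times I)$, which is a disjoint union of products $F'_j\times I$ where the $F'_j$ are the components of $F$ cut along $c_1$. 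Each $F'_j$ is incompressible in $F$, so $\pi_1(F'_j)\to\pi_1(F)$ is injective; consequently the sub-collection of annuli landing in a given $F'_j\times I$ is again a family of incompressible spanning annuli there, and by induction each such piece can be re-parameterized so its annuli become vertical. Reassembling the pieces gives a re-parameterization of $F\times I$ in which every $A_i$ is vertical, and the $\pi_1$-injectivity of $F'_j\hookrightarrow F$ guarantees the resulting curves stay essential in $F$.

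For the single annulus $A$, I would first pin down the boundary. Since $A$ is incompressible and $F\times I$ is irreducible (the excluded case $F=S^2$ being vacuous), the circles $c^0=A\cap(F\times\{0\})$ and $c^1=A\cap(F\times\{1\})$ are essential in $F$; the annulus $A$ is a free homotopy between them, so projecting to $F$ shows $c^0$ and $c^1$ are freely homotopic, hence isotopic, essential simple closed curves. After a fiber-preserving isotopy of $F\times I$ I may therefore assume $\partial A=\partial(c\times I)$ for a single essential curve $c$, and it remains to isotope $A$ onto $A_0:=c\times I$. If $A$ is $\partial$-incompressible, then being incompressible and $\partial$-incompressible in the $I$-bundle $F\times I$ it is isotopic to a vertical or horizontal surface; a spanning annulus cannot be horizontal, so it is vertical, i.e.\ isotopic to $c\times I$, as desired. (Equivalently one can argue directly: arrange $\mathrm{int}\,A\cap\mathrm{int}\,A_0=\emptyset$ by removing intersection circles with an innermost-disk/irreducibility argument and boundary-parallel intersection arcs with an outermost-arc argument, then cut along $A_0$ and recognize the image of $A$ as a boundary-parallel annulus.)

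The one point that needs extra care is that a spanning incompressible annulus in $F\times I$ need not be $\partial$-incompressible: when $\partial F\neq\emptyset$ it may be parallel into $\partial_v(F\times I)=\partial F\times I$, and such a $\partial$-parallel annulus is $\partial$-compressible. In that situation I would take a $\partial$-compression, obtaining a properly embedded disk $A'$; using irreducibility of $F\times I$ one checks $A'$ is boundary-parallel, and unwinding the parallelism shows $A$ itself is parallel into $\partial_v(F\times I)$, hence isotopic to a vertical annulus over a curve parallel to a component of $\partial F$ (in particular essential). Either way $A$ ends up vertical. The hard part is exactly this single-annulus straightening — in effect the ``vertical-or-horizontal'' theorem for incompressible surfaces in $I$-bundles — together with the bookkeeping to accommodate the $\partial$-parallel case and the verification that essentiality of the core curves is not lost when cutting in the inductive step.
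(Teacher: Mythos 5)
Your proof is correct, but it takes a genuinely different route from the paper's. You reduce to a single annulus by cutting along the first straightened annulus and inducting on $|\A|$, and you settle the single-annulus case by first matching $\partial A$ with $\partial(c\times I)$ (via the free-homotopy-implies-isotopy fact for essential simple closed curves) and then appealing to the vertical-or-horizontal theorem for incompressible, $\partial$-incompressible surfaces in $I$-bundles, with a separate argument for the $\partial$-parallel case. The paper instead straightens all the annuli simultaneously: it sets $\C=\A\cap(F\times\{0\})$, takes $\A'=\C\times I$, arranges $\partial\A'$ parallel and disjoint from $\partial\A$, and removes $\A\cap\A'$ directly --- innermost-disk moves for inessential intersection circles, then product-region arguments (incompressible horizontal annuli with parallel boundary cobound solid tori with annuli on $F\times\{1\}$), removed uppermost first, for the essential ones --- until $\A'$ can be pushed onto $\A$ and the proper isotopy extended ambiently. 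Your approach is more modular, outsourcing the hard part to a classical theorem and keeping the multi-annulus bookkeeping trivial; the paper's is more self-contained and avoids induction, at the cost of leaving several of its intersection-removal steps at the level of a sketch. One point you should make explicit in your inductive step: the re-parameterizations of the cut-open pieces $F'_j\times I$ must be adjusted near $\partial_v(F'_j\times I)$ --- which the remaining annuli avoid, so this costs nothing --- so that they agree along the copies of $c_1\times I$ and glue to a single re-parameterization of $F\times I$.
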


\begin{proof}

This is a well known fact which often appears in the literature, so the following proof is merely a sketch.  If $\C=\A\cap (F\times\{0\})$, then $\C\times I$ is another union of spanning annuli $\A'$ such that $\A'\cap (F\times \{1\})$ is isotopic to $\A\cap (F\times\{1\})$ in $F\times \{1\}$ (this follows from the $\pi_1$-injectivity of $\A$ and $\A'$).  This allows $\A'$ to be properly isotoped so that $\partial \A'$ and $\partial \A$ are parallel and disjoint in $\F\times \{0,1\}$.  Since $\A$ and $\A'$ are both incompressible, and $F\times I$ is irreducible, any simple closed curves in $\A'\cap \A$ which are trivial in either of $\A'$ or $\A$ can be eliminated via further isotopy of $\A'$ using standard inner-most disk arguments.  Again, since each component of $\A\cup \A'$ is $\pi_1$-injective, any remaining components of $\A\cap \A'$ must come from pairs of annuli $A,A'$ with isotopic boundaries on $\F\times\{0,1\}$.  Thus these components of $\A\cap \A'$ can also be removed, uppermost ones first, using the fact that any incompressible horizontal annulus in $F\times I$ with parallel boundary components will co-bound a solid torus with an annulus on $F\times\{1\}$.  Once $\A'$ has been made disjoint from $\A$, the components of $\A\cup \A'$ will co-bound solid tori with annuli in $F\times\{0,1\}$, so that $\A'$ can finally be isotoped onto $\A$.  Extending this proper isotopy of $\A'$ to an ambient isotopy of $\F\times I$ yields the desired reparameterization.

\end{proof}

\begin{prop}

Let $\V$ be a compression body and let $\A=\A_s\cup \A_h$ be a disjoint union of incompressible annuli properly embedded in $V$, so that every component of $\A_s$ is spanning and every component of $\A_h$ is horizontal.  Then $E(\A,\V)$ is a generalized compression body $\V'$ such that $\partial_v(\V')=Fr(\A_s)$ and $Fr(\A_h)\subset \partial_+\V'$.  Moreover, for an appropriate ordering of the components of $\A_h=A_1\cup\cdots\cup A_n$, if we set $Fr(A_i)=A_i'\cup A_i''$, $\A'=A_1'\cup\cdots \cup A_n'$, and $\A''=A_1''\cup\cdots \cup A_n''$, then the collection $\A'\cup \A''$ admits a primitive disk set in $\V'$.

\end{prop}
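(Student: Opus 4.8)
The plan is to analyze $E(\A,\V)$ piece by piece, first cutting along the spanning annuli and then along the horizontal ones. Begin by recalling that $\V$ is a compression body, so $\partial_v\V=\emptyset$ and $\V=(\partial_-\V\times I)\cup(1\text{-handles})$; equivalently $\V=E(\Gamma,\partial_+\V\times I)$ for a suitable graph, but the cleanest normal form here is $\V = (\partial_+\V\times I)\cup(2\text{-handles})\cup(3\text{-handles})$. Since each component of $\A_s$ is an incompressible spanning annulus, I would first isotope $\A_s$ into the collar structure. More precisely, each spanning annulus can be isotoped (rel nothing) so that it meets a collar $\partial_-\V\times I$ of $\partial_-\V$ in a product $c\times I$ and meets the rest of $\V$ in a horizontal annulus; since the complement of $\A_s$ in this adjusted picture is built from a (possibly disconnected) compression body with the $2$- and $3$-handles distributed among its components, cutting along $\A_s$ yields a disjoint union of compression bodies. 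The frontier $Fr(\A_s)$ becomes vertical boundary: each spanning annulus, having one end on $\partial_-\V$ and one on $\partial_+\V$, contributes exactly a $(\partial A)\times I$-type piece to the boundary of the cut-open manifold, so $\partial_v(E(\A_s,\V))=Fr(\A_s)$, while $\partial_+$ is the part of $\partial_+\V$ cut along $\A_s\cap\partial_+\V$ and $\partial_-$ is the part of $\partial_-\V$ cut along $\A_s\cap\partial_-\V$. Call this generalized compression body $\V_0$.

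Next I would cut $\V_0$ along the horizontal annuli $\A_h$. A horizontal incompressible annulus in a generalized compression body $\V_0$ has both boundary components on $\partial_+\V_0$, and I claim it is either $\partial$-parallel into $\partial_+\V_0$ or, after isotopy, can be made disjoint from the $2$- and $3$-handles so that it lies in the collar $\partial_+\V_0\times I$ as $c\times\{1/2\}$ for an essential curve $c$ — this is the horizontal analogue of Proposition 1.9, and I would prove it by an innermost-disk / outermost-arc argument against the cocores of the $2$-handles, using incompressibility of $\A_h$ and irreducibility of $\V_0$. (The genuinely $\partial$-parallel case only makes the conclusion easier, so I would dispense with it first, or absorb it by allowing the parallelism region to be reattached.) Once $\A_h$ sits as $\C_h\times\{1/2\}$ in the product region, cutting along it simply doubles the collar: $E(\A_h,\V_0)$ is again a disjoint union of generalized compression bodies, and each component $A_i$ of $\A_h$ contributes its frontier $Fr(A_i)=A_i'\cup A_i''$ as two new horizontal annuli on $\partial_+$, because both ends of $A_i$ were on $\partial_+\V_0$. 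This gives $Fr(\A_h)\subset\partial_+\V'$ where $\V'=E(\A_h,\V_0)=E(\A,\V)$, and the vertical boundary is unchanged, so $\partial_v\V'=Fr(\A_s)$ as claimed.

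It remains to produce the primitive disk set for $\A'\cup\A''$. Here I would use the product structure explicitly: after the reparameterization of Proposition 1.9 applied on $\V_0$, the horizontal annulus $A_i$ is parallel to an annulus in $\partial_+\V_0$, and cutting along it produces a product region $P_i\cong(\text{annulus})\times I$ sitting between $A_i'$ and $A_i''$ in $\V'$, with $\partial_+P_i$ containing a spanning disk of that product. The meridian disk $D_i$ of this product region is a compressing disk of $\V'$ with $\partial D_i\subset\partial_+\V'$, and it meets $A_i'$ (say) in a single spanning arc while missing $A_i''$ — so $A_i'$ is dual to $D_i$. The ordering is dictated by the nesting of the parallelism regions: if $A_i$ and $A_j$ are parallel horizontal annuli cobounding nested product regions, order them from outermost to innermost, so that $D_i$ (coming from the region of $A_i$) is disjoint from all $A_j$ with $j>i$. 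One must check $D_i\cap A_j$ can simultaneously be arranged empty for all $j>i$ — this is where the choice of a global compatible ordering of mutually parallel and non-parallel families of horizontal annuli is needed, and it is the step I expect to be the main obstacle, since a priori the disks produced from different product regions may intersect other annuli and must be pushed off in a consistent, order-respecting way. I would handle this by a careful induction: take $A_1$ to be an outermost horizontal annulus (one whose parallelism region contains no other $A_j$ in its interior), extract $D_1$ there disjoint from all others, cut, and repeat. The disjointness-for-$j>i$ condition then falls out of the induction, matching exactly the hypothesis format of Proposition 1.6, so that Proposition 1.6 (or rather the fact that such a set with that ordering is by definition a primitive disk set) completes the proof.
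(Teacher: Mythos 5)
Your central claim, that a horizontal incompressible annulus in a (generalized) compression body is either $\partial$-parallel or can be isotoped into the collar $\partial_+\V_0\times I$ as $c\times\{1/2\}$, is false, and this sinks both the second and third paragraphs. An incompressible horizontal annulus in $F\times I$ is automatically $\partial$-parallel, so your two alternatives coincide, and you are in effect claiming every horizontal incompressible annulus in a compression body is $\partial$-parallel. But essential (incompressible and non-$\partial$-parallel) horizontal annuli do exist even in handlebodies: take a genus-two handlebody $H = T_1\cup_A T_2$ written as two solid tori glued along an annulus $A$, with the core of $A$ wrapping $p_i\ge 2$ times around the core of $T_i$. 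Then $A$ is incompressible, has both boundary circles on $\partial_+H=\partial H$, and is not $\partial$-parallel — no innermost-disk argument against the cocores of $2$-handles will push it into a collar. Consequently your product region $P_i$ between $A_i'$ and $A_i''$ need not exist, and the "meridian disk of $P_i$" that was to supply the primitive disk set is unavailable.

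The paper's proof avoids this by using two strictly weaker (and true) facts, applied in the opposite order. It first cuts along the horizontal annuli $\A_h$, invoking Lemma 2 of Schultens (\cite{schul}) to conclude that $E(\A_h,\V)$ is still a union of compression bodies even though the $A_i$ may be essential; only then does it deal with $\A_s$, using the spanning-annulus normal form of Proposition 1.9 inside a product piece cut out by meridian disks. For the primitive disk set it uses Lemma 9 of Bonahon--Otal, which says horizontal incompressible annuli in a compression body are $\partial$-\emph{compressible} — a much weaker property than $\partial$-parallel — and then runs an induction: the $\partial$-compressing disk $D_1$ for an annulus $A_1$ meets $Fr(A_1)$ in a single spanning arc and can be chosen disjoint from the rest, which is exactly the primitive-disk condition and sets up the inductive step. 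If you want to salvage your argument, you should replace "$\partial$-parallel/in the collar" with "$\partial$-compressible" throughout, invoke the Schultens and Bonahon--Otal lemmas (or prove analogues), and reverse the cutting order so that the horizontal annuli are cut inside an honest compression body where these lemmas apply; once you do that your inductive "outermost-first" bookkeeping in the last paragraph lines up with what the paper actually does.
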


\begin{proof}

Lemma 2 of \cite{schul} tells us that $\W=E(\A_h,\V)$ is a union of compression bodies.  The annuli $\A_s$ remain spanning in $\W$, thus are disjoint from some union of disks $\E$ properly embedded in $\W$ such that $E(\E,\W)\cong \partial_-\W\times I$ (see, e.g., Lemma 3.15 of \cite{saitoschulschar}).  By Proposition 1.9, we may assume that $E(\E,\W)$ has been parameterized so that $\A_s$ has the form $\mathcal{C}\times I$ in $\partial_-\W\times I$, where $\mathcal{C}\subset \partial_-\W$ is a disjoint union of simple closed curves.  Thus $\V'=E(\A_s,\W)=E(\A,\V)$ is a generalized compression body satisfying $\partial_v\V'=Fr(\A_s)$, as claimed.

We prove the second part by induction on $n$.  There is nothing to prove if $n=0$.  If $n>0$, then Lemma 9 of \cite{Bonahon-Otal} implies that $\A_h$ is boundary-compressible in $\V$ via some disk $D_1$.  We may choose $D_1$ so that it is disjoint from $\A_s$, and we assign the label $A_1$ to the component of $\A_h$ which has been boundary-compressed by $D_1$. By the first part of this lemma, $E(\A_s\cup A_1,\V)$ is a generalized compression body and so by induction $Fr(\A_h-A_1)$ admits a primitive disk set $\D'$ in $\V'=E(\A,\V)$ with respect to an appropriate choice of numbering for $\A_h-A_1$, with $A_2$ as the lowest indexed annulus.  We may also choose (appropriately indexed) $\D'$ so that $\D'\cap D_1=\emptyset$, and since $D_1\cap (\A_h-A_1)=\emptyset$, it follows that $\D=\D'\cup (D_1\cap \V')$ is a primitive disk set for $Fr(\A_h)$ in $\V'$, as required.

\end{proof}

\begin{definition}

A graph $\X$ embedded in a manifold $M$ is said to be {\em properly embedded} if $\X$ is transverse to $M$ and $\X\cap\partial M$ is a union of elements from the set of univalent vertices of $\X$.  $\X$ is said to be {\em unknotted} if it can be isotoped into $\partial M$ via an isotopy $\Phi:\X\times I\rightarrow M$ such that the function $\Phi|_{\{t\}\times X}$ is a proper embedding for all $0\leq t<1$.

\end{definition}

\begin{obs}

If $X$ is an unknotted tree properly embedded in the $3$-ball $B$, and $F$ is the surface which results from removing an open collar of $\partial \overline{\partial B-N(X)}$ from $\overline{\partial B-N(X)}$, then there is homeomorphism $h:F\times I\cong E(X,B)$ such that $h(F\times\{1\})=F$ and $h(F\times\{0\})=Fr(X,B)$. 

\end{obs}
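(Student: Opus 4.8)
The plan is to use the defining isotopy of an unknotted tree to slide $X$ into a standard position lying just inside $\partial B$, and then to recognize $E(X,B)$ directly as a product region $F\times I$.

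First I would unpack the definition of an unknotted graph: there is an isotopy $\Phi\colon X\times I\to B$ with $\Phi_0$ the inclusion, each $\Phi_t$ ($0\le t<1$) a proper embedding, and $\Phi_1(X)\subset\partial B$. Applying the ambient isotopy extension theorem on each subinterval $[0,s]$, $s<1$, yields homeomorphisms $\Psi_s\colon B\to B$ with $\Psi_s|_X=\Phi_s$ and $\Psi_0=\mathrm{id}$; these carry $N(X,B)$ onto $N(\Phi_s(X),B)$, so $E(X,B)\cong E(\Phi_s(X),B)$ and $\overline{\partial B-N(X,B)}$ is carried onto $\overline{\partial B-N(\Phi_s(X),B)}$. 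Hence it suffices to prove the statement for $X_s:=\Phi_s(X)$ for a single value of $s$ close to $1$. For such $s$ the tree $X_s$ lies in a thin collar $\partial B\times[0,\varepsilon]$ of $\partial B=\partial B\times\{0\}$, with the $a:=|X\cap\partial B|$ univalent vertices of $X$ on $\partial B$ still lying on $\partial B\times\{0\}$ and everything else of $X_s$ in $\partial B\times(0,\varepsilon)$. Since a tree carries no knotting in a $2$-sphere, a further ambient isotopy of $B$ supported in this collar carries $X_s$ onto a standard tree $X_0$: a copy of $\Phi_1(X)$ pushed to a constant small height $\eta<\varepsilon$, together with $a$ straight vertical ``legs'' running from height $\eta$ down to $a$ disjoint points of $\partial B\times\{0\}$. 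Thus $N(X_0,B)$ is a $3$-ball meeting $\partial B$ in exactly $a$ disjoint disks and meeting $\partial(\partial B\times[0,\varepsilon])$ nowhere else.

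Next I would compute $E(X_0,B)$. Write $B=(\partial B\times[0,1])\cup B'$ with $B'$ a ball capping $\partial B\times\{1\}$, and arrange $N(X_0,B)\subset\partial B\times[0,1/2)$. Then $E(X_0,B)=\overline{(\partial B\times[0,1/2])-N(X_0,B)}\cup\widehat B$, glued along the entire sphere $\partial B\times\{1/2\}$, where $\widehat B=(\partial B\times[1/2,1])\cup B'$ is a $3$-ball. Capping off that sphere, $E(X_0,B)$ becomes the complement, in a $3$-ball, of a standardly embedded ``ball with $a$ feet.'' By an easy induction on $a$ (equivalently, by reparameterizing a collar of the ambient $3$-ball so that its feet become $N(X_0,B)$), this complement is homeomorphic to $\Sigma_a\times I$, where $\Sigma_a$ is $S^2$ with $a$ open disks deleted, and the product structure can be arranged so that $\Sigma_a\times\{0\}=Fr(X_0,B)$ and $\Sigma_a\times\{1\}$ equals $\overline{\partial B-N(X_0,B)}$ with an open collar of its boundary removed, i.e.\ equals $F$ for $X_0$. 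Transporting this homeomorphism back along the collar isotopy and $\Psi_s^{-1}$, which carry $Fr(X_0,B)$ and $\overline{\partial B-N(X_0,B)}$ onto $Fr(X,B)$ and $\overline{\partial B-N(X,B)}$, produces a homeomorphism $h\colon F\times I\to E(X,B)$ with $h(F\times\{1\})=F$ and $h(F\times\{0\})=Fr(X,B)$.

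The heart of the matter, and the only place where unknottedness is genuinely used, is the normalization of $X_s$ to the standard tree $X_0$ — that is, ruling out any ``knotting just below $\partial B$.'' I would prove this carefully using uniqueness of regular neighborhoods together with a normal-form argument for properly embedded trees in a collar $S^2\times I$; granting that, the ball-complement computation at the end is routine. Alternatively, one can induct on the number of edges of $X$, stripping off a leaf-edge at each step, the corresponding point being that each stripped leaf-edge is boundary-parallel in the product region constructed so far.
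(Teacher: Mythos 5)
The paper does not prove this statement (it appears as an Observation with no argument attached), so there is no in-text proof to compare against; what follows is a review of your sketch on its own merits. The overall plan --- use the defining isotopy plus isotopy extension to move $X$ into a thin collar of $\partial B$, normalize it there, and then identify the complement of the normalized tree directly as a product --- is a sensible route, and the final ``ball with $a$ feet'' computation is routine once the normalization is secured.

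The gap is exactly where you say it is, but your justification for closing it is not yet an argument. ``A tree carries no knotting in a $2$-sphere'' is a statement about $\Phi_1(X)\subset\partial B$; the object you actually need to normalize is $X_s$, which lives in the $3$-dimensional collar $\partial B\times[0,\varepsilon]$, and a properly embedded tree in a collar can certainly be knotted --- thinness of the collar by itself gives nothing. What saves you is that, by uniform continuity of $\Phi$ on the compact set $X\times I$, for $s$ sufficiently close to $1$ the entire trace $\bigcup_{t\in[s,1]}\Phi_t(X)$ lies inside a single regular neighborhood $U\cong B^3$ of the embedded planar tree $\Phi_1(X)$, with $U\cap\partial B$ a disk; the ambient extension of $\Phi|_{[s,s']}$ ($s'<1$) can then be taken supported in $U$, and from that one deduces that $X_s$ sits in $U$ in an unknotted way and can be straightened over $\Phi_1(X)$. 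That is the lemma you gesture at with ``uniqueness of regular neighborhoods together with a normal-form argument,'' and it should be stated and proved rather than asserted, since it is the only place the hypothesis is used. Two smaller points: the paper's definition of a properly embedded graph allows univalent vertices of $X$ in the interior of $B$, so your standard model $X_0$ needs free stubs in addition to the $a$ vertical legs; and the edge-by-edge induction you mention at the end, stripping a boundary-parallel leaf at each stage, is in my view the cleaner way to organize the whole proof, since it sidesteps the delicate ``close to $\partial B$ implies standard'' step entirely.
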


\begin{definition}
Let $X$ be a graph embedded in a handlebody $V$ such that $E(X,V)\cong\partial V\times I$.  Then $X$ is called a {\em spine} of $V$.
\end{definition}

\begin{prop}

Suppose $X$ is a graph embedded in the interior of a handlebody $V$, and that there is a disjoint union of compressing disks $\D$ properly embedded in $V$ so that the following are true:

\begin{enumerate}

\item $E(\D,V)$ is a union of balls,
\item $X\cap B$ is a properly embedded, unknotted tree for each component $B$ of $E(\D,V)$,
\item For each component $D$ of $\D$, $D\cap X$ is a single point on an edge of $X$.

\end{enumerate}

Then $X$ is a spine of $V$.

\end{prop}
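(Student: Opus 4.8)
The plan is to build the homeomorphism $E(X,V)\cong \partial V\times I$ by assembling local pieces over the ball decomposition and then gluing them together compatibly along the disks $\D$. First I would set $\V_0 = E(\D,V)$, a disjoint union of balls $B_1,\dots,B_k$ by hypothesis (1). Inside each $B_j$ the graph $X\cap B_j$ is a properly embedded unknotted tree by (2), so Observation 1.13 applies: letting $F_j$ be the surface obtained from $\overline{\partial B_j - N(X\cap B_j)}$ by deleting an open collar of its boundary, there is a homeomorphism $h_j\colon F_j\times I \cong E(X\cap B_j, B_j)$ with $h_j(F_j\times\{1\}) = F_j$ and $h_j(F_j\times\{0\}) = Fr(X\cap B_j, B_j)$. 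So $E(X,V)\cap B_j$ is a trivial $I$-bundle over $F_j$, with the ``top'' $F_j$ lying in $\partial B_j$ and the ``bottom'' equal to the frontier of the tree.

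Next I would track what happens along a disk $D$ of $\D$. By (3), $D$ meets $X$ in a single interior point of an edge, so $D\cap E(X,V)$ is an annulus $D' = E(D\cap X, D)$ (a disk with one open subdisk removed). The key point is that this annulus is a \emph{spanning} annulus in each of the two $I$-bundle pieces meeting along it: one boundary circle of $D'$ lies on $\partial B_j$ (hence in the ``top'' $F_j\times\{1\}$ after a small collar adjustment) and the other lies on $Fr(X)$ (the ``bottom''). Thus when we reglue the balls along $\D$ to recover $V$, we are gluing trivial $I$-bundles $F_j\times I$ along \emph{vertical} annuli — annuli of the form (arc in $F_j$)$\times I$ — at least after using Proposition 1.9 to straighten each spanning annulus $D'$ inside the product structure $h_j(F_j\times I)$ so that it has the form $\alpha_j \times I$ for a properly embedded arc $\alpha_j\subset F_j$. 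Since the gluing maps respect the $I$-coordinate (they carry top to top and bottom to bottom), the union of all the $F_j\times I$ along these vertical annuli is again a trivial $I$-bundle $F\times I$, where $F$ is the surface obtained by gluing the $F_j$ along the arcs $\alpha_j$. Finally I would identify this glued-up ``top'' surface $F$ with $\partial V$: the bottom $F\times\{0\}$ has been assembled from the frontiers $Fr(X\cap B_j,B_j)$ glued along subarcs, which is exactly $Fr(X,V)$, while the top $F\times\{1\}$ is assembled from pieces of $\partial B_j$, and after reversing the drilling of $\D$ this recovers all of $\partial V$; hence $E(X,V)\cong \partial V\times I$ and $X$ is a spine by Definition 1.14.

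The main obstacle is the gluing/straightening step: one must check that the product structures $h_j$ on the individual pieces can be chosen \emph{compatibly} along the annuli $D'$, so that they patch to a global product structure rather than merely agreeing set-theoretically. Concretely, after applying Proposition 1.9 to arrange each $D'$ as $\alpha_j\times I$ on the $B_j$-side, one must reconcile this with the product structure coming from the adjacent ball across $D$; this is a matter of isotoping the identifications rel the bottom $Fr(X)$, using that a homeomorphism of $F_j$ fixing the $\alpha_j$ setwise and fixing $\partial F_j$ appropriately extends over the $I$-bundle, together with the collar adjustments needed to make ``top of $B_j$'' and ``$\partial B_j$'' line up after the collar deletion in Observation 1.13. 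Handling the collars carefully — so that the deleted collars along the $\partial D'$ match up across the gluing — is the fussy technical core; once that is in place the rest is bookkeeping with Observation 1.13 and Proposition 1.9.
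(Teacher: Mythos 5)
Your plan — cut $V$ along $\D$ into balls, apply the observation about unknotted trees in balls to get a product structure $F_j\times I$ on each $E(X\cap B_j, B_j)$, then paste — is exactly the paper's approach, and you have correctly located the genuine content in the final pasting step. The paper sidesteps the ``fussy technical core'' you worry about by a sharper choice of $F_j$: it takes $F_j=\overline{\partial B_j-\E}$, where $\E$ is the set of scars left by $\D$, i.e.\ exactly the piece of $\partial V$ that remains visible on $\partial B_j$. With that choice the annular scars $\E-\mathring{N}(X)$ are forced to coincide with the vertical boundary $\partial F_j\times I$, so the tops $F_j\times\{1\}$ tile $\partial V$, the bottoms $F_j\times\{0\}$ tile $Fr(X,V)$, and regluing along $\D$ matches vertical annuli to vertical annuli with no straightening required.

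Two points in your straightening step don't quite work as stated and are worth correcting. Proposition 1.9 is about incompressible spanning annuli \emph{properly embedded} in $F\times I$; after cutting along $\D$ the scar annuli are boundary subsurfaces of the pieces $E(X\cap B_j,B_j)$, not properly embedded annuli in their interiors, so the proposition does not directly apply. And the gluing loci are annuli of the form $\gamma\times I$ where $\gamma$ is a \emph{boundary circle} of $F_j$, not $\alpha\times I$ for a properly embedded arc $\alpha$ (that would be a disk, whereas $D\cap E(X,V)$ is an annulus since $D\cap X$ is a single interior point). Once you replace your collar with the scar collar so that $F_j=\overline{\partial B_j-\E}$, these issues evaporate and the remaining matching of product structures on the glued circle$\times I$ faces is the standard isotopy of a product structure on $S^1\times I$ rel its ends.
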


\begin{proof}

For each component $B$ of $E(\D,V)$, let $\E\subset \partial B$ be the set of ``scars'' left behind by cutting along $\D$, i.e. $\E=N(\D,V)\cap B$, and let $X'=X\cap B$.  Then hypotheses (2) and (3) give us a homeomorphism $h:F\times I\rightarrow E(X',B)$ as per Observation 1.12, where we may take $F=\overline{\partial B-\E}$.  These homeomorphisms can then be pasted together to form a homeomorphism between $E(X,V)$ and $\partial V\times I$.

\end{proof}

\begin{prop}

Suppose $V$ is a handlebody with spine $X$, and that $Y$ is a subgraph of $X$ without simply connected components.  Then $E(Y,V)$ is a compression body.

\end{prop}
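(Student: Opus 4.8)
My plan is to realize $E(Y,V)$ as a collar on $Fr(Y,V)$ with finitely many one-handles attached along its positive boundary, and then invoke Observation 1.3. Write $Z=\overline{X-Y}$ for the subgraph of $X$ spanned by the edges that do not lie in $Y$, so that $X=Y\cup Z$ and $Y\cap Z$ is a finite set of vertices. Since $Y$ is one-dimensional and $V$ is connected, $E(Y,V)$ is connected, and I will aim to make it a compression body with $\partial_+E(Y,V)=\partial V$ and $\partial_-E(Y,V)=Fr(Y,V)$.

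For each edge $e$ of $Z$ let $D_e$ be a meridian disk of the one-handle $N(e,V)$ of the handlebody $V$, chosen near the midpoint of $e$, and set $\D=\bigcup_e D_e$. These disks are disjoint, and each meets $X$ transversely in a single point lying in the interior of its edge of $Z$; hence $\D\cap Y=\varnothing$, so after a small push $\D$ is a disjoint union of properly embedded disks in $E(Y,V)$ with $\partial\D\subset\partial V$. The key point is to identify $E(\D,E(Y,V))$. Cutting $V$ along $\D$ bisects every edge of $Z$, so $E(\D,V)\cong N(X',V)$, where $X'$ is obtained from $X$ by bisecting each edge of $Z$. Its components are: for each component $C$ of $Y$, a graph $C'$ obtained from $C$ by attaching finitely many arcs, each along a single vertex of $C$, so that $C$ remains a spine of $N(C',V)$; and, for each vertex $v$ of $X$ not in $Y$, a tree $\sigma_v$, with $N(\sigma_v,V)$ a ball disjoint from $N(Y,V)$. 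Since $N(Y,V)=\bigsqcup_C N(C,V)$ sits inside $E(\D,V)=\bigl(\bigsqcup_C N(C',V)\bigr)\sqcup\bigl(\bigsqcup_{v\notin Y}N(\sigma_v,V)\bigr)$ as a union of spine-neighborhoods, one gets $\overline{N(C',V)-N(C,V)}\cong Fr(C,V)\times I$, so
\[
E(\D,E(Y,V))=\overline{E(\D,V)-N(Y,V)}\ \cong\ \bigl(Fr(Y,V)\times I\bigr)\ \sqcup\ \bigl(\textstyle\bigsqcup_{v\notin Y}B^{3}\bigr).
\]

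Here the hypothesis enters: since no component of $Y$ is simply connected, each $Fr(C,V)$ is a closed surface of positive genus, so $Fr(Y,V)$ contains no $2$-sphere and $Fr(Y,V)\times I$ is a generalized compression body over $Fr(Y,V)$; each $B^{3}$ is a genus-$0$ handlebody, hence also a generalized compression body; so $E(\D,E(Y,V))$ is a (possibly disconnected) generalized compression body whose negative boundary $Fr(Y,V)$ was left untouched by the cut and whose positive boundary carries all the scars of $\D$. Reattaching $\D$ is precisely the attachment of the one-handles $N(\D)$ along that positive boundary, so by Observation 1.3, applied one handle at a time, $E(Y,V)$ is a generalized compression body; it is connected, and $\partial_vE(Y,V)=\varnothing$ because its positive boundary $\partial V$ is closed, hence $E(Y,V)$ is a compression body.

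The step I expect to demand genuine care, as opposed to bookkeeping, is the claim that after bisecting the edges of $Z$ each component $C$ of $Y$ is still a spine of the handlebody $N(C',V)$ in which it sits — equivalently, that $\overline{N(C',V)-N(C,V)}$ is a collar on $Fr(C,V)$. This is where the local geometry of the cut must actually be checked (attaching arcs to $C$ along single vertices changes $N(C,V)$ only by boundary-parallel one-handles; compare Observation 1.12 and Proposition 1.14), and it is also where the positive-genus conclusion for $Fr(Y,V)$, and hence the necessity of the hypothesis on $Y$, becomes visible.
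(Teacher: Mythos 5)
Your argument is correct, and it reaches the conclusion via the handle decomposition dual to the paper's. The paper's proof is a single line: since $X$ is a spine, $E(X,V)\cong\partial V\times I$, and $E(Y,V)$ is recovered from $E(X,V)$ by gluing back the pieces of $N(X,V)-N(Y,V)$, namely $2$-handles for the edges of $\overline{X-Y}$ and $3$-handles for the vertices of $X$ not in $Y$, attached along $\partial V\times\{0\}$; that is precisely the structure required by Definition 1.1, with the hypothesis on $Y$ entering silently to guarantee that those $3$-handles account for \emph{every} spherical component of the intermediate negative boundary. You instead cut $E(Y,V)$ along co-core disks $\D$ of the $1$-handles carried by the edges of $Z=\overline{X-Y}$, identify $E(\D,E(Y,V))$ as $Fr(Y,V)\times I$ together with some balls, and rebuild by $1$-handle reattachments through Observation 1.3. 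The two proofs exploit the same handle structure of $V$ relative to the spine $X$, but they run in opposite directions: the paper starts at $\partial_+=\partial V$ and pushes inward across $2$- and $3$-handles, while you start at $\partial_-=Fr(Y,V)$ and push outward across $1$-handles. Your version is longer and turns on the claim, which you rightly flag and which is indeed true, that each component $C$ of $Y$ remains a spine of the component $N(C',V)$ of $E(\D,V)$ containing it; in exchange it makes explicit both where the hypothesis on $Y$ is used (to keep sphere components out of $Fr(Y,V)$) and the ``$\partial_-\times I$ plus $1$-handles'' picture of the resulting compression body, neither of which the paper's one-liner spells out.
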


\begin{proof}
$E(Y,V)$ is obtained from $E(X,V)\cong \partial V\times I$ by attaching $2$-handles and $3$-handles along $\partial V\times\{0\}$.
\end{proof}

\begin{definition}

A {\em Heegaard splitting} $(V,W,G)$ of a manifold $M$ is a decomposition $M=V\cup W$ where each of $V,W$ is a compression body and $G=\partial_+ V=\partial_+ W=V\cap W$.  $G$ is called a {\em Heegaard surface}.  The {\em Heegaard genus} $g(M)$ of a manifold is the minimal genus of a Heegaard surface for $M$.

\end{definition}

\begin{definition}
A {\em generalized Heegaard splitting} $((V_1,W_1,G_1),\cdots , (V_n,W_n,G_n))$ of a manifold $M$ is a decomposition $M=M_1\cup\cdots \cup M_n$ such that $(V_i,W_i,G_i)$ forms a Heegaard splitting for the submanifold $M_i$, $M_i\cap M_{i+1}=\partial_-W_i=\partial_-V_{i+1}=F_i$ for $1\leq i<n$, and $M_i\cap M_j=\emptyset$ whenever else $i\neq j$ (we include the case $n=1$ corresponding to standard Heegaard splittings).  The surface $\G=G_1\cup\cdots\cup G_n\cup F_1\cup\cdots\cup F_{n-1}$ is called a {\em generalized Heegaard surface}.  Given a generalized Heegaard surface $\G$, we let $\G_+=G_1\cup\cdots\cup G_n$ denote the {\em thick surfaces} of $\G$, and $\G_-=\G-\G_+$ denote the {\em thin surfaces} of $\G$.

\end{definition}

\begin{definition}
A Heegaard splitting $(V,W,G)$ is said to be:

\begin{itemize}
\item {\em Stabilized} if there exist compressing disks $D\subset V$, $D'\subset W$ such that $D\cap D'$ is a single point.
\item {\em Reducible} if there exist compressing disks $D\subset V$, $D'\subset W$ such that $\partial D =\partial D'$.
\item {\em Weakly reducible} if it is not stabilized or reducible, but there are compressing disks $D\subset V$, $D'\subset W$ such that $D\cap D' =\emptyset$
\item {\em Strongly irreducible} if it is not stabilized and, for all compressing disks $D\subset V$, $D'\subset W$, $D\cap D' \neq \emptyset$
\end{itemize}
\end{definition}

\begin{rmk}
There is process of {\em untelescoping} a weakly reducible Heegaard splitting $(V,W,G)$ whereby it is changed into a generalized Heegaard splitting $((V_1,W_1,G_1),\cdots , (V_n,W_n,G_n))$ satisfying $g(G)=\Sigma g(G_i) -\Sigma g(F_i)$.  Conversely, given a generalized Heegaard splitting $((V_1,W_1,G_1),\ldots ,(V_n,W_n,G_n))$ for $M$, one can always use the process of {\em amalgamation} to change it into a standard Heegaard splitting $(V,W,G)$ of $M$ satisfying the same equation.  The interested reader is referred to \cite{saitoschulschar} for the details of these processes and a proof of the following lemma.
\end{rmk}

\begin{prop} \cite{scharthomp}
If $(V,W,G)$ is a weakly reducible Heegaard splitting of $M$, then $(V,W,G)$ can be untelescoped to a generalized Heegaard splitting $((V_1,W_1,G_1),$ $\ldots ,(V_n,W_n,G_n))$ such that $(V_i,W_i,G_i)$ is a strongly irreducible splitting of $M_i$ for each $1\leq i \leq n$, and the thin surfaces $F_i$ are incompressible in $M$ for each $1\leq i <n$.  In this case the generalized splitting is said to be {\em fully untelescoped}.  A standard Heegaard splitting that is strongly irreducible will also be considered {\em fully untelescoped}.

\end{prop}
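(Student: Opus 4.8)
The plan is to reproduce the argument of Scharlemann and Thompson, so I will only sketch it. First I would fix a complexity: to each connected surface $S$ assign $c(S)=\max\{0,\,1-\chi(S)\}$, and to a generalized Heegaard splitting the finite multiset of the numbers $c(G_i)$ taken over its thick surfaces, two such multisets being compared by sorting each into non-increasing order and comparing lexicographically — an ordering with no infinite descending chains. Then I would pass to the family of all generalized Heegaard splittings of $M$ reachable from $(V,W,G)$, viewed as a one-stage generalized splitting, by finite sequences of \emph{untelescoping moves}: destabilizations; reductions along spheres (each bounding a ball since $M$ is irreducible, any resulting ball components being discarded); and weak reductions. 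The first thing to check is that every such move strictly lowers the complexity — compressing a thick surface along a non-empty system of disks strictly raises its Euler characteristic, so the move replaces one entry of the multiset by the strictly smaller complexities of the compressed surfaces, while destabilization and reduction drop a genus outright — so that a complexity-minimal member $((V_1,W_1,G_1),\ldots,(V_n,W_n,G_n))$ of the family exists. I would then show that this minimizer is fully untelescoped.

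That each stage $(V_i,W_i,G_i)$ is strongly irreducible follows quickly from minimality: a stabilized or reducible stage could be destabilized or reduced along the resulting (ball-bounding) sphere, and a weakly reducible stage comes, by Definition~1.18, with disjoint compressing disks $\Delta\subset V_i$ and $\Delta'\subset W_i$ (extended to maximal disjoint systems), along which one compresses $M_i$ to split it into a two-stage generalized splitting whose thin surface is the simultaneous compression of $G_i$ along $\Delta\cup\Delta'$ and whose two new thick surfaces are the compressions of $G_i$ along $\Delta$ and along $\Delta'$, each of strictly smaller complexity. Any of these moves would contradict minimality, so every stage is strongly irreducible.

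The main obstacle is the incompressibility of the thin surfaces $F_i$ in $M$; this is the real content of the theorem. Here I would suppose some $F_j$ had a compressing disk $E$ in $M$, and after an isotopy arrange that $E$ lies in a single stage adjacent to $F_j$, say $M_j=V_j\cup W_j$, as a compressing disk for $F_j=\partial_-W_j$ whose interior may nevertheless meet $G_j$. Amalgamating the stages $M_j$ and $M_{j+1}$ back into one Heegaard splitting $(V',W',G')$ of $M_j\cup M_{j+1}$, with $g(G')=g(G_j)+g(G_{j+1})-g(F_j)$, one uses $E$ together with the compression-body structure to exhibit a weak reduction — or a stabilization — of $(V',W',G')$; untelescoping $(V',W',G')$ by means of this data then yields a generalized splitting of $M_j\cup M_{j+1}$, and the crucial point is that a careful Euler-characteristic accounting through the amalgamation and the ensuing weak reduction shows the resulting global multiset to be strictly below that of the minimizer — the contradiction. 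It is exactly this estimate, ensuring that unamalgamating and rethinning across a compressible thin surface genuinely lowers complexity, that takes real care; the full argument appears in \cite{scharthomp} and is also written out in \cite{saitoschulschar}. (The concluding convention, declaring an already strongly irreducible standard splitting fully untelescoped, requires nothing further.)
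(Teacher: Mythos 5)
The paper does not supply a proof of Proposition 1.20; it is stated as an imported result, attributed to \cite{scharthomp}, with \cite{saitoschulschar} given earlier (Remark 1.19) as the reference for the machinery of untelescoping and amalgamation. Your sketch faithfully reproduces the Scharlemann--Thompson thin-position argument that underlies that citation — the complexity multiset, the well-ordering, strict descent under destabilization/reduction/weak reduction, and the complexity argument for incompressibility of thin levels — so it is the same route the paper is implicitly relying on, with the one small caveat that your parenthetical ``each bounding a ball since $M$ is irreducible'' adds a hypothesis the proposition does not state (harmless here, since knot exteriors in $S^3$ are irreducible, and essential spheres are vacuously incompressible anyway).
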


\begin{prop}\cite{schulschar2}
If $\G$ is the union of the thick and thin surfaces of a fully untelescoped Heegaard splitting of $M$, and $T$ is an incompressible surface properly embedded in $M$, then $\G$ can be isotoped so that it meets $T$ only in simple closed curves which are non-trivial in both $\G$ and $T$.

\end{prop}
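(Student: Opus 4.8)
The plan is to show that the surface $\mathcal{G}$ can be isotoped to intersect the incompressible surface $T$ so that every component of $\mathcal{G}\cap T$ is a simple closed curve that is essential in both surfaces. First I would put $\mathcal{G}$ and $T$ into general position, so that $\mathcal{G}\cap T$ is already a disjoint union of simple closed curves. The goal is then to remove, one at a time, any curve $c$ of $\mathcal{G}\cap T$ that bounds a disk in $\mathcal{G}$ or in $T$, without introducing new intersections and without changing the topological type of the decomposition carried by $\mathcal{G}$ (i.e.\ keeping it a generalized Heegaard surface of $M$).

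The key step is to choose an \emph{innermost} such inessential curve and reduce. Suppose some component $c$ of $\mathcal{G}\cap T$ bounds a disk $D\subset T$; choosing $c$ innermost in $T$, we may take $D$ to meet $\mathcal{G}$ only in $c$. Now $c$ lies on one of the thick surfaces $G_i$ or thin surfaces $F_j$ of $\mathcal{G}$. If $c$ is inessential in that component of $\mathcal{G}$, then $c$ bounds a disk $D'$ there as well, $D\cup D'$ is a sphere, and irreducibility (after the untelescoping, the pieces are irreducible away from the relevant spheres, and $M=S^3-\eta(K)$ is irreducible in our applications) lets us isotope $\mathcal{G}$ across the ball bounded by $D\cup D'$ to eliminate $c$ — and, being innermost, no new intersection curves are created. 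If instead $c$ is essential in its component $\Sigma$ of $\mathcal{G}$: when $\Sigma=F_j$ is a thin surface this cannot happen, since the $F_j$ are incompressible in $M$ and $c$ bounds the compressing disk $D$. When $\Sigma=G_i$ is a thick surface, compress $G_i$ along $D$; this is a legitimate move because $(V_i,W_i,G_i)$ is strongly irreducible, so $D$ (lying on one side after a small pushoff, since $D\setminus c$ is disjoint from $\mathcal{G}$) is either a genuine compression producing a generalized Heegaard surface of smaller complexity for $M_i$ — contradicting minimality if we had chosen $\mathcal{G}$ of minimal complexity, or, more robustly, the compression is along an inessential disk and we are back in the previous case. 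The symmetric argument handles a curve innermost in $\mathcal{G}$ bounding a disk $D'\subset \mathcal{G}$ with $D'$ disjoint from $T$: then $D'$ is a compressing disk for $T$, impossible since $T$ is incompressible, unless $\partial D'$ is also inessential in $T$, returning us to the sphere-swallowing case.

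Iterating this removes all inessential intersection curves in finitely many steps, since each reduction strictly decreases $|\mathcal{G}\cap T|$ and the innermost choices guarantee no new curves appear. The resulting $\mathcal{G}$ meets $T$ only in curves essential in both surfaces, and throughout we have only isotoped $\mathcal{G}$, so it remains (the union of the thick and thin surfaces of) a fully untelescoped Heegaard splitting of $M$. The main obstacle is the bookkeeping in the thick-surface case: one must argue carefully that compressing $G_i$ along an innermost disk coming from $T$ either is forbidden by strong irreducibility or is an inessential compression, and in the former case either invoke minimality of the splitting or show the curve was essential only vacuously — this is exactly the point where the hypothesis ``fully untelescoped'' (strong irreducibility of the $G_i$ together with incompressibility of the $F_j$) is doing the real work. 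Since this proposition is attributed to \cite{schulschar2}, I expect the paper to simply cite it rather than reproving it in full.
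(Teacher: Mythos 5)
The paper itself offers no proof here: as you anticipated at the end, Proposition~1.21 is simply cited to Scharlemann--Schultens \cite{schulschar2} and used as a black box. So the comparison is between your sketch and the argument one would actually carry out.

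Your general framework (general position, minimize $|\G\cap T|$, handle innermost inessential curves, split on whether the curve lies on a thin or a thick component) is the right skeleton, and the ``inessential in both'' and ``thin-surface'' cases are handled correctly. The genuine gap is in the thick-surface case, which is exactly where the work lies. When $c$ is essential in $G_i$ and bounds an innermost disk $D\subset T$, you propose to compress $G_i$ along $D$ and ``contradict minimality.'' This does not work for two reasons. First, the statement makes no minimality assumption on $\G$: it applies to \emph{any} fully untelescoped splitting, so you cannot appeal to minimality at all. Second, even if you add a minimality hypothesis, compressing a strongly irreducible $G_i$ along a single disk $D\subset V_i$ does not produce a generalized Heegaard surface of $M_i$ of smaller complexity --- thinning requires disjoint compressions on \emph{both} sides, which is precisely what strong irreducibility forbids, and a one-sided compression alone does not yield a thinner valid splitting. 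Your fallback (``the compression is along an inessential disk'') directly contradicts the case hypothesis that $c$ is essential in $G_i$, so it cannot rescue the argument.

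What is actually needed in the thick-surface case is a more careful use of strong irreducibility. A standard route: if after minimizing $|G_i\cap T|$ there are innermost disks of $T\setminus G_i$ with essential boundary on \emph{both} sides of $G_i$, they give disjoint compressing disks in $V_i$ and $W_i$, directly contradicting strong irreducibility. If all such innermost disks lie on one side, say $V_i$, then one shows $T\cap W_i$ is incompressible in the compression body $W_i$ (any compression would, via incompressibility of $T$, produce an innermost disk on the $W_i$ side), and then uses the structure of incompressible surfaces in compression bodies to either reduce $|G_i\cap T|$ or contradict incompressibility of $T$. This dichotomy --- not a complexity count --- is where strong irreducibility earns its keep, and it is the missing ingredient in your sketch.
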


\section{Spoke surfaces in the solid torus}

\begin{conv}
Throughout this section, we set $W=S^1\times D^2$ and parameterize it using polar coordinates $(\phi,r,\theta)$, $0\leq \phi,\theta\leq 2\pi$, $0\leq r\leq 1$.
\end{conv}

\begin{definition}
Let $X\subset W$ be an embedded graph with one central vertex $x_0$ at $(\phi_0,0,0)$, a finite number of outer vertices $\{x_1,\cdots , x_n\}\subset \{\phi_0\}\times \partial D^2$, one radial edge connecting each outer vertex $x_i$ to the central vertex $x_0$, and one longitunidal edge $l_i=S^1\times \{x_i\}$ connecting $x_i$ to itself, $1\leq i\leq n$.  Then $X$ is said to be a {\em connected spoke graph} in $W$.  A finite, disjoint union of connected spoke graphs $\X$ is simply called a {\em spoke graph}, $Fr(\X)$ is called a {\em spoke surface}, $E(\X,W)$ is a {\em spoke chamber}.
\end{definition}

\begin{definition}
Suppose $X$ is a connected spoke graph with central vertex at $(\phi_0,0,0)$.  Let $\D$ be a disjoint union of disks embedded in $\mathring{W}$ such that for each component $D$ of $\D$,

\begin{itemize}

\item $D\cap X$ is a connected subarc of a radial edge of $X$,

\item $D\cap (\{\phi_0\}\times D^2)=D\cap X$.

\end{itemize}

Then $X\cup \partial\D$ is called a {\em stabilized} spoke graph with stabilizing disk set $\D$, $Fr(X\cup\partial \D)$ is a stabilized spoke surface, and $E(X\cup \partial D,W)$ is a stabilized spoke chamber.  Moreover, $\overline{\partial \D-X}$ is called the set of {\em stabilizing arcs} of $X$.

\end{definition}

\begin{definition}

Let $X$ be a connected stabilized spoke graph with stabilizing disk set $\D$ and central vertex at $(\phi_0,0,0)$.  Let $\A=E(X\cap \partial W,\partial W)$, and let $\E=(\{\phi_0\}\times D^2)\cap V$, where $V=E(X,W)$.  Then the {\em standard disk set} of $X$ is the union of disks $\D_X=\E\cup(\D\cap V)\cup Fr(\E\cup \A,V)$.

\end{definition}

\begin{obs}

The standard disk set of $X$ cuts $E(X,W)$ into a union of balls.  Thus $E(X,W)$ is a handlebody.

\end{obs}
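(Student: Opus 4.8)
The plan is to show directly that the standard disk set $\D_X$ cuts $V=E(X,W)$ into a disjoint union of $3$-balls; once that is done, ``$V$ is a handlebody'' is immediate from the definition of handlebody. I would carry this out by cutting along the three families of disks that make up $\D_X=\E\cup(\D\cap V)\cup Fr(\E\cup\A,V)$ in a specific order.

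First, cut $V$ along $\E=\Delta\cap V$, where $\Delta=\{\phi_0\}\times D^2$ is the meridian disk containing the central vertex and the radial edges. Because $X\cap\Delta$ is exactly the ``star'' consisting of $x_0$, the radial edges, and the outer vertices (the stabilizing disks meet $\Delta$ only inside $X$, by the defining condition $D\cap(\{\phi_0\}\times D^2)=D\cap X$), the pieces of $\E$ are $n$ meridional sectors, each a disk, and cutting $V$ along $\E$ is the same as cutting $W$ along $\Delta$ and then deleting $N(X,W)$. Writing $B=W|\Delta$ (a $3$-ball) and $G=X|\Delta$, I would then reparameterize $B\cong D^2\times I$, straightening each cut-open longitudinal arc $l_i|\Delta$ to $\{x_i\}\times I$ as in the proof of Proposition 1.9, so that the unstabilized part $X_0|\Delta$ lies entirely in $\partial B$: two copies of the radial-edge star on the disk faces $D^2\times\{0,1\}$, joined by the $n$ vertical arcs coming from the longitudinal edges (which lay on $\partial W$). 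Only the $m$ stabilizing arcs survive into $\mathring B$, each an arc with both endpoints on a copy of a radial edge.

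Second, cut along the $m$ disks $\D\cap V$. Each survives the first cut as a properly embedded disk in $E(G,B)$, and by the defining conditions on a stabilizing disk, each $D\cap V$ is the cocore of the $1$-handle of $N(G,B)$ that the corresponding stabilizing arc carves out of $B$; hence cutting $E(G,B)$ along all of $\D\cap V$ yields $E(G_0,B)$ with $G_0=X_0|\Delta\subset\partial B$. Since $G_0$ lies in the boundary sphere of the ball $B$, a regular neighborhood $N(G_0,B)$ is a collar-type neighborhood $N(G_0,\partial B)\times I$, so $E(G_0,B)$ is homeomorphic to $B$, a single $3$-ball. Finally, the remaining disks $Fr(\E\cup\A,V)$ survive all the preceding cuts as disjoint properly embedded disks in this ball, and a properly embedded disk in a $3$-ball always separates it into two $3$-balls, so cutting along $Fr(\E\cup\A,V)$ produces a disjoint union of balls. (An Euler-characteristic bookkeeping check confirms the count: $\D_X$ has $2n+m$ disks and cuts the genus-$(n+m)$ handlebody $V$ into $n+1$ balls.)

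The main obstacle is the first stage: one must make precise that cutting $V$ along $\E$ genuinely ``un-rolls'' it into a graph complement in a $3$-ball in which $X_0|\Delta$ sits in the boundary, and then track both other families of disks carefully through this identification. In particular, verifying in the second stage that each $D\cap V$ is exactly the cocore of the handle produced by its stabilizing arc — rather than some more complicated disk — requires using the two defining conditions in Definition 2.3, and the resulting ``un-stabilization'' is the only step that is not purely formal. The remainder is routine cut-and-paste bookkeeping of the kind already used in Section 1.
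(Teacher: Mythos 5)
The paper states this as an Observation with no accompanying proof, so there is nothing in the source to compare against; your write-up supplies the missing argument, and its main thrust is correct. Cutting along $\E$ un-rolls $V$ into $E(G,B)$ where $B=W|\Delta$ is a ball and $G=X|\Delta$ sits on $\partial B$ except for the $m$ stabilizing arcs $\sigma_j$; cutting along $\D\cap V$ then reduces this to $E(G_0,B)$, a single ball since $G_0\subset\partial B$; and the $n$ disks of $Fr(\E\cup\A,V)$ only subdivide that ball further, so the result is a union of $n+1$ balls, matching your Euler-characteristic count. One piece of loose terminology in the second stage: $D_j\cap V$ is not literally the cocore of the $1$-handle $N(\sigma_j,B)\subset N(G,B)$ (that cocore lies inside $N(G,B)$, whereas $D_j\cap V$ is properly embedded in its complement). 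The cleanest way to justify the step you correctly identify as the crux is to note that $E(G,B)\,|\,(\D\cap V)=B-\mathring{N}(G\cup\D,B)$ and that $G\cup\D$ collapses to $G_0$ (each disk $D_j$ collapses across itself onto its radial subarc $\rho_j\subset G_0$, carrying $\sigma_j$ with it), so regular neighborhoods and hence exteriors agree: $E(G\cup\D,B)\cong E(G_0,B)$. The defining conditions of Definition 2.3, namely that $D_j\cap X$ is a connected radial subarc and $D_j\cap\Delta=D_j\cap X$, are exactly what make this collapse work and keep the $D_j\cap V$ disjoint from $\E$ and from $Fr(\E\cup\A,V)$. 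With that rephrasing the argument is complete.
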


\begin{figure}
    \centering
    \includegraphics[width=0.8\textwidth]{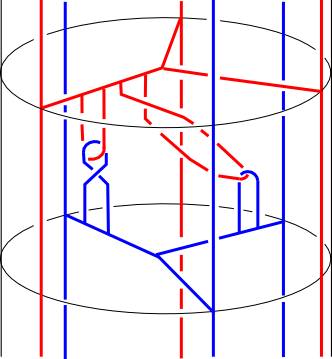}
    \caption{Dual stabilized spoke graphs}
\end{figure}

\begin{definition}

Let $X$ be a connected, stabilized spoke graph with stabilizing disk set $\D$, and whose central vertex has $\phi$-coordinate $\phi_1$.  Let $X'$ be another connected, stabilized spoke graph disjoint from $X$ with stabilizing disk set $\D'$, whose central vertex has $\phi$-coordinate $\phi_2\neq \phi_1$, and suppose the following properties are also satisfied:

\begin{itemize}

\item The set of longitudinal edges of $X'$ is precisely the set of core curves of the annuli $E(X\cap \partial W,\partial W)$.
\item $\D\cap (\{\phi_2\}\times D^2)=\emptyset=\D'\cap (\{\phi_1\}\times D^2)$.
\item Every component of $D$ of $\D$ meets precisely one component $D'$ of $\D'$ in a single arc which has one endpoint on $\partial D-X$, and the other on $\partial D'-X'$, and conversely each component of $\D'$ meets precisely one component of $\D$ in this way.
\item Let $h$ be the projection $S^1\times D^2\rightarrow S^1$. Then $h|_{\mathring{\D}}$ is circle-valued Morse function without singularities, and for every stabilizing arc $\alpha$ of $X$, $h|_\alpha$ is Morse with only one critical point occurring at $\alpha\cap \D'$.  Likewise for the stabilizing disks and arcs of $X'$.

\end{itemize}  

Then $X$ and $X'$ are said to be {\em dual} to one another.  See Figure 2.

\end{definition}

\begin{rmk}

One way to obtain a dual graph $X'$ is to rotate a copy of $X$ slightly in the $\phi$ and $\theta$ directions, and then isotope the stabilizing arcs of $X'$ slightly so that they clasp those of $X$ in one to one fashion.  However, we use the Morse condition on the stabilizing arcs because it allows us complete flexibility in the choice of radial edges of $X'$ at which to base its stabilizing arcs, while still avoiding knottedness.

\end{rmk}

\begin{lem}

Let $X$ and $X'$ be a dual pair of connected spoke surfaces in $W$, and let $B$ be the component of $E(X,W)-\mathring{N}(\D_X)$ which contains the central vertex of $X'$, where $\D_X$ is the standard disk set of $X$ from Definition 2.4.  Then $B$ is a ball, and $X'\cap B$ is an unknotted tree properly embedded in $B$.

\end{lem}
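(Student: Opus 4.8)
The first assertion requires no work: by Observation 2.5 the standard disk set $\D_X$ cuts the handlebody $E(X,W)$ into balls, so the component $B$ is a ball. Everything else is contained in the claim about $X'\cap B$, and the plan is to describe both $B$ and $X'\cap B$ explicitly in terms of the angular coordinate $\phi$, and then to comb $X'\cap B$ onto $\partial B$.

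To set this up, write $V=E(X,W)$ and recall that $\E=(\{\phi_1\}\times D^2)\cap V$ is a union of ``sectors'' of the meridian disk at $\phi=\phi_1$ (the part of that disk left after deleting a neighborhood of the radial edges of $X$), that the annuli $\A=E(X\cap\partial W,\partial W)$ are the ``gaps'' between the longitudinal edges of $X$, and that $\D$ lies in an angular window avoiding the slice $\phi=\phi_2$. Cutting $V$ along $\D_X=\E\cup(\D\cap V)\cup Fr(\E\cup\A,V)$ thus amounts to removing a neighborhood of the meridian slice at $\phi_1$, a collar of the gap annuli $\A$, and the disks $\D$; the component of the result containing $(\phi_2,0,0)$ is $B$. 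Now track $X'$: its longitudinal edges are the cores of $\A$ and its outer vertices lie on those cores, so both are swallowed by the removed collar $N(\A,V)$; its central vertex and radial edges lie entirely in the slice $\{\phi_2\}\times D^2$; and each of its stabilizing arcs meets $\D$ in exactly the single point that is, by Definition 2.7, its unique $\phi$-critical point, and is $\phi$-monotone away from that point. Hence $X'\cap B$ is the union of the central vertex of $X'$, its $n$ radial edges truncated where they enter $N(\A,V)$, and the $\phi$-monotone sub-arcs into which $\D_X$ divides the stabilizing arcs of $X'$, each of the latter hanging off a radial edge of $X'$ at one end and ending at a univalent vertex on a scar of $\D_X$ at the other. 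This graph is connected, since everything attaches to the central vertex along the radial edges, and acyclic, since every circuit of $X'$ (a longitudinal edge, or the boundary of one of its stabilizing disks) is cut or deleted outright by $\D_X$; and it meets $\partial B$ only in its univalent vertices. So $X'\cap B$ is a tree properly embedded in $B$.

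It remains to prove that $X'\cap B$ is unknotted, which is the crux. The ``spider'' formed by the central vertex and the truncated radial edges of $X'$ lies in the properly embedded slice disk $\Delta=(\{\phi_2\}\times D^2)\cap B$; since $\partial\Delta$ bounds a disk $\Delta'$ on the sphere $\partial B$ which co-bounds a ball with $\Delta$, sweeping $\Delta$ across that ball to $\Delta'$ carries the spider, through proper embeddings, into $\partial B$, so the spider on its own is unknotted. To deal with the remaining sub-arcs I would first comb each one back toward the radial edge it is attached to, across the corresponding piece of the stabilizing disk $D'$ of $X'$: by the ``Morse without singularities'' condition of Definition 2.7 this piece is a $\phi$-monotone embedded disk, so the combing is monotone in $\phi$ and creates no clasps, and since the stabilizing disks of $X'$ are pairwise disjoint (each meeting just one stabilizing disk of $X$, in a single arc) these combings can be carried out simultaneously; afterwards $X'\cap B$ is isotopic to a tree lying in a thin regular neighborhood of $\Delta$, and we are reduced to the spider. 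The step I expect to be the main obstacle is precisely this last combing: checking that the reductions really are disjoint and really are monotone in $\phi$ — so that no two of the surviving sub-arcs, and no sub-arc together with the spider, become linked — is where all of the incidence and Morse hypotheses packaged into the definition of a dual pair (rather than just the informal ``slightly rotated copy of $X$'' of Remark 2.8) have to be used with care.
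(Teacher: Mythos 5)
Your proposal is correct and rests on the same key input as the paper's proof, namely the Morse-without-singularities condition of Definition 2.6 on the stabilizing arcs and disks. The paper takes a slightly different but equivalent route to the final unknotting: rather than combing the surviving sub-arcs back across the stabilizing disks of $X'$ onto the slice disk $\Delta$, it first slides their endpoints off the scars $N(\D)\cap\partial B$, pushes those scars in so that $\partial B - \partial W$ becomes level with respect to the angular projection $h$, and then isotopes the arcs horizontally (within level sets) until they are vertical, after which unknottedness is visible by inspection. Both routes hinge on exactly the same Morse hypothesis, and the disjointness-and-monotonicity worry you flag at the end is precisely what that hypothesis, together with the pairwise disjointness of the stabilizing disks of $X'$, is designed to discharge — so this is not a genuine gap, just the place where Definition 2.6 earns its keep.
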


\begin{proof}

It is clear that $B$ is a ball, we show that $X'\cap B$ is unknotted.  We retain the notation of Definition 2.6 throughout.  If $\C$ is the set of stabilizing arcs of $X'$, then the Morse condition on $\C$ ensures that $h|_{\C\cap B}$ is Morse without singularities, and since $h|_{\mathring{\D}}$ is also Morse without singularities, we can slide the endpoints of $\C\cap B$ off of $N(\D)\cap \partial B$ without introducing any further singularities.  The components of $N(\D,W)\cap \partial B$ can then be ``pushed in'' so that $\partial B-\partial W$ is level with respect to $h$, and the arcs of $\C\cap B$ can then be properly isotoped horizontally with respect to $h$ until they are vertical.  After these isotopies, it is clear that $X'\cap B$ is unknotted. 

\end{proof}

\begin{lem}

If $X$ and $X'$ are connected, dual, stabilized spoke graphs, then $E(X\cup X',W)$ is homeomorphic to $Fr(X)\times I$ via a map sending $Fr(X')$ to $Fr(X)\times\{0\}$ and $Fr(X)$ to $Fr(X)\times \{1\}$.

\end{lem}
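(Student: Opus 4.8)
The plan is to build the product structure on $E(X\cup X',W)$ by first using the standard disk set $\D_X$ of $X$ to cut $E(X,W)$ into balls, then tracking how $X'$ and the cutting data sit inside each ball, and finally reassembling. By Observation 2.5, $\D_X$ cuts $E(X,W)$ into a union of balls, so $X$ is a spine of the handlebody $E(X,W)$ in the sense of Definition 1.13 — indeed Proposition 1.14 applies with the disk set $\D_X$. The duality hypotheses of Definition 2.6 are tailored so that $X'$ intersects this cut-up handlebody in exactly the way hypotheses (2) and (3) of Proposition 1.14 require: Lemma 2.8 already establishes that in the ball $B$ containing the central vertex of $X'$, the intersection $X'\cap B$ is an unknotted properly embedded tree, and the Morse condition on the stabilizing arcs together with the clasping condition on the disks $\D,\D'$ lets one run the identical argument in every other component of $E(X,W)-\mathring N(\D_X)$. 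The condition that each component of $\D'$ meets exactly one component of $\D$ in a single arc is precisely what guarantees that $D\cap X'$ is a single point on an edge of $X'$ for each relevant cutting disk $D$, so hypothesis (3) holds.

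Concretely, I would argue as follows. First, observe that $X'$ is disjoint from $X$ and lies in the interior of $E(X,W)$, which is a handlebody by Observation 2.5. Second, exhibit a disjoint union of compressing disks for $E(X,W)$ — namely (an isotoped copy of) the standard disk set $\D_X$, pushed slightly off $X$ — meeting the three hypotheses of Proposition 1.14: that $E(\D_X, E(X,W))$ is a union of balls is Observation 2.5; that $X'\cap B$ is an unknotted tree in each ball $B$ is Lemma 2.8 in one case and its verbatim repetition (using the symmetry of Definition 2.6 in $X$ and $X'$, and the Morse/clasping conditions) in the others; and that each cutting disk meets $X'$ in a single point on an edge follows from the one-to-one clasping of $\D$ and $\D'$. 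Proposition 1.14 then yields that $X'$ is a spine of $E(X,W)$, i.e. $E(X'\cup X, W) = E(X', E(X,W)) \cong \partial(E(X,W))\times I$. Since $Fr(X) = \partial(E(X,W)) \cap W$ up to the collar removed in passing to $Fr$, this product is $Fr(X)\times I$.

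Finally I would pin down the boundary behavior of the homeomorphism. The product structure produced by Proposition 1.14 sends $\partial(E(X,W))\times\{1\}$ to $\partial(E(X,W))$ itself — which restricts to $Fr(X)$ — and sends $\partial(E(X,W))\times\{0\}$ to $Fr(X', E(X,W)) = Fr(X',W)$, because in the proof of Proposition 1.14 the $\{0\}$-level of each local product $E(X'\cap B, B)\cong F\times I$ is exactly $Fr(X'\cap B, B)$, and these paste to $Fr(X')$. So the homeomorphism carries $Fr(X')$ to $Fr(X)\times\{0\}$ and $Fr(X)$ to $Fr(X)\times\{1\}$, as required.

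I expect the main obstacle to be the careful bookkeeping needed to verify hypothesis (2) of Proposition 1.14 in \emph{every} component of $E(X,W)-\mathring N(\D_X)$, not just the one treated in Lemma 2.8 — one must check that the Morse condition on all stabilizing arcs of $X'$, combined with the fact that $h|_{\mathring\D}$ has no singularities and that $\D'\cap(\{\phi_1\}\times D^2)=\emptyset$, really does let one straighten $X'\cap B$ to a vertical (hence unknotted) tree in each ball. A secondary subtlety is making the two ``spines'' picture symmetric: $\D_X$ is defined relative to $X$, so to invoke Lemma 2.8-type reasoning for the pieces of $X'$ one leans on the duality being a genuine symmetry of the hypotheses in Definition 2.6, which should be remarked on explicitly.
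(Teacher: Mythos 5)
There is a genuine gap, and it is not the one you flagged at the end. You observe that ``$X'$ is disjoint from $X$ and lies in the interior of $E(X,W)$,'' but this is false: the longitudinal edges of $X'$ are, by Definition 2.6, core curves of the annuli $E(X\cap\partial W,\partial W)\subset\partial W$, and those annuli are part of $\partial E(X,W)$. So $X'$ meets $\partial E(X,W)$ and Proposition 1.14 (whose spine graph sits in the \emph{interior} of the handlebody) does not apply as you invoke it. The symptom is visible in the conclusion you reach: $\partial E(X,W)\times I$ has two closed boundary components, whereas $E(X\cup X',W)$ has a single connected boundary, namely $Fr(X)\cup Fr(X')$ glued along the spanning annuli of $\partial W-N(X\cup X')$; these are not homeomorphic, so no amount of ``removing a collar'' passes from one to the other. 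The phrase ``$Fr(X)=\partial(E(X,W))\cap W$'' is also not meaningful, since $\partial E(X,W)\subset W$.

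The paper's proof sidesteps exactly this by doubling: it doubles $W$ to $S^1\times S^2$ so that $X_d'$ does lie in the interior of the handlebody $E(X_d,S^1\times S^2)$, applies Proposition 1.14 there (using the doubled standard disk set and Lemma 2.8, as you anticipated), obtains $E(X_d\cup X_d',S^1\times S^2)\cong\partial N(X_d)\times I$, and then uses Proposition 1.9 to make the spanning annuli $(\partial W)\cap E(X_d\cup X_d',S^1\times S^2)$ vertical in that product, after which cutting along $\partial W$ recovers $E(X\cup X',W)\cong Fr(X)\times I$ with the stated boundary behavior. If you want to salvage your outline, you must add this doubling step (or an equivalent device that produces a product structure respecting the annuli on $\partial W$); as written the argument does not go through.
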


\begin{proof}

Take the double of $W$ to obtain $S^1\times S^2$, let $X_d$ be the double of of $X$, and let $X_d'$ be the double of $X'$.  Then $E(X_d,S^1\times S^2)$ is a handlebody, since the double $\E$ of the standard disk set $\D_X$ cuts $E(X_d,S^1\times S^2)$ into balls.  Moreover, $E(X_d,S^1\times S^2)$, $\E$, and $X_d'$ satisfy the hypotheses of Proposition 1.14 (Lemma 2.8 handles the only subtle aspect of this).  Thus $X_d'$ is a spine of $E(X_d,S^1\times S^2)$, and we obtain a parameterization $E(X_d\cup X_d',S^1\times S^2)\cong \partial N(X_d)\times I$.  By Proposition 1.9, the spanning annuli $(\partial W)\cap E(X_d\cup X_d',S^1\times S^2)$ can be assumed to be vertical with respect to this parameterization, and the result follows.

\end{proof}

\begin{rmk}

Besides being a steppingstone to Proposition 2.18 below, the significance of Lemma 2.9 is that it allows us to isotope a connected, stabilized spoke surface $S\subset W$ back and forth between small neighborhoods of dual spoke graphs lying on opposite sides of $S$ in $W$.  This kind of isotopy will play an essential role in the final doppelg\"{a}nger construction.

\end{rmk}

\begin{definition}

Let $\X$ be a disjoint union of stabilized spoke graphs embedded in $W$.  Then two components $X_1$ and $X_2$ of $\X$ are said to be:

\begin{itemize}

\item {\em $\phi$-adjacent} if there is a subarc $\beta\subset S^1$ such that the $\beta\times\{0\}\subset W$ meets $\X$ only in the central vertices of $X_1$ and $X_2$, on its endpoints.  In this case we call $\beta\times\{0\}$ the {\em spanning arc of $\phi$-adjacency}.

\item {\em $\theta$-adjacent} if the closure $A$ of some component of $\overline{\partial W-N(\X )}$ meets $N(X_1,W)$ in one boundary component and $N(X_2,W)$ in the other.  In this case $A$ is said to be a {\em spanning annulus of $\theta$-adjacency}.

\end{itemize}

\end{definition}

\begin{definition}

Let $\X$ be a disjoint union of stabilized spoke graphs whose components are ordered $X_1,\ldots X_n$ so that $X_i$ is $\phi$-adjacent to $X_{i+1}$, and let $\alpha_i$ be the spanning arc of $\phi$-adjacency for $X_i$, $X_{i+1}$, $1\leq i<n$. Then $\alpha=\alpha_1\cup\cdots\cup \alpha_n$ is said to be the {\em binding arc} of $\X$ with respect to the given ordering of its components. 

Suppose further that $X_i$ is $\theta$-adjacent to $X_{i+1}$, and let $A_i$ be a spanning annulus of $\theta$-adjacency for $X_i$, $X_{i+1}$, $1\leq i<n$.  Then $\A=A_1\cup\cdots \cup A_{n-1}$ is said to form an {\em adjacency chain} for $\X$ with respect to the given ordering of its components.

\end{definition}

\begin{definition}

Let $\X$ be a disjoint union of stabilized spoke graphs (possibly with detached longitudes), and let $X$ be a connected stabilized spoke graph (possibly with detached longitudes) obtained from $\X$ by rotating each component of $\X$ in the $\phi$-direction so that all of their central vertices coincide at a single vertex $x_0$.  Then $\X$ is said to be a {\em decomposition} of $X$.  If $d(x_0,v)<\epsilon$, for every central vertex $v$ occurring in a component of $\X$, where $d:W\times W\rightarrow \mathbb{R}$ is the flat metric, then $\X$ is said to be an {\em $\epsilon$-small} decomposition of $X$.

\end{definition}

\begin{rmk}

If $\X$ is an $\epsilon$-small decomposition of $X$, and $\epsilon<\pi/2$, then there are exactly two orderings $X_1,\ldots , X_n$ of the components of $\X$ with the property that $X_i$ is $\phi$-adjacent to $X_{i+1}$ for all $1\leq i<n$, and such that the binding arc $\alpha$ has length less than $2\epsilon$ (and these two orderings are just the reverse of one another).  If, moreover, $X_i$ is $\theta$-adjacent to $X_{i+1}$ for all $1\leq i<n$ for one (and therefore both) of these orderings, we shall say that $\X$ is a {\em good} $\epsilon$-small decomposition of $X$.

\end{rmk}

\begin{obs}

Let $\X$ be an $\epsilon$-small decomposition of $X$, let $\alpha$ be the binding arc of $\X$.  Then $Fr(\X\cup \alpha)$ is isotopic to $Fr(X)$, and $E(\X,W)$ is obtained from $E(X,W)\cong E(\X\cup \alpha,W)$ via $2$-handle attachments to $Fr(\X\cup \alpha, W)$ along meridians of $\alpha$.

\end{obs}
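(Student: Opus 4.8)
The plan is to treat the two assertions in turn; essentially all the content is in the first, with the second a formal handle-duality.

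For the isotopy $Fr(\X\cup\alpha)\simeq Fr(X)$, I would realize the ``rotation'' in the definition of a decomposition as an ambient isotopy of $W$. Write $\alpha=\alpha_1\cup\cdots\cup\alpha_{n-1}$, where $\alpha_i\subset S^1\times\{0\}$ is the spanning arc of $\phi$-adjacency joining the central vertices $v_i,v_{i+1}$ of the components $X_1,\ldots,X_n$ of $\X$, so that $\alpha$ is an embedded arc in the core circle through $v_1,\ldots,v_n$ (here the $\epsilon$-smallness of $\X$, together with Remark 2.16, is what makes this arc short and the ordering essentially unambiguous). Let $B=N(\alpha\cup\bigcup_j\{v_j\},W)$, a ball. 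I would then perform a combined slide-and-rotate isotopy of $W$: slide the $v_j$ along $\alpha$ into a common small ball $N(x_0,W)$, while rotating each $X_j$ in the $\phi$-direction so that its radial edges come to lie in $\{\phi_0\}\times D^2$; the longitudinal edges of $\X$, being $\phi$-invariant circles, already sit in the positions they occupy in $X$ and stay fixed throughout, and no strand ever crosses another, so this is a genuine ambient isotopy. The vertices need not literally coincide: it is enough that afterwards $B$ together with the attached edges (and stabilizing disks) forms a configuration whose regular neighborhood is $N(X,W)$, and this is precisely the definitional content of $\X$ being a decomposition of $X$. By uniqueness of regular neighborhoods up to ambient isotopy we then get $Fr(\X\cup\alpha)\simeq Fr(X)$, hence, taking complements in $W$, $E(\X\cup\alpha,W)\cong E(X,W)$.

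For the second assertion, I would write $N(\X\cup\alpha,W)=N(\X,W)\cup H$, where $H=\overline{N(\alpha,W)-N(\X,W)}$ is a disjoint union of $n-1$ one-handles $H_1,\ldots,H_{n-1}$, the handle $H_i$ being a neighborhood of the part of $\alpha_i$ between $N(X_i,W)$ and $N(X_{i+1},W)$, attached to $N(\X,W)$ along a pair of disks. Taking complements in $W$ reverses the inclusion: $E(\X,W)=E(\X\cup\alpha,W)\cup H$, and now each $H_i\cong D^2\times I$ meets $E(\X\cup\alpha,W)$ exactly in the lateral annulus $H_i\cap Fr(\X\cup\alpha,W)$, so $H_i$ is glued on as a $2$-handle; its attaching circle is the core of that annulus, i.e. the belt circle of the former $1$-handle $H_i$, which is a meridian of $\alpha_i\subset\alpha$. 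Combined with the first assertion this is the statement.

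The step I expect to be the main obstacle is the first one. The slide-and-rotate isotopy is intuitively obvious, but some care is needed to check that it can be carried out without ever making $\X\cup\alpha$ into a knotted or otherwise inequivalent stabilized spoke graph, and in particular that the stabilizing disks are transported to legitimate stabilizing disks for $X$; this is where the hypotheses (the decomposition structure and $\epsilon$-smallness, with $\epsilon$ small) do their work, but verifying it rigorously is a bookkeeping exercise rather than a conceptual one.
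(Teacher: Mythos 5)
The paper gives no proof for this Observation, so there is nothing to compare against; your argument is a correct and natural supply of the missing detail. Both steps --- the ambient slide-and-rotate isotopy plus uniqueness of regular neighborhoods to get $Fr(\X\cup\alpha)\simeq Fr(X)$, and the handle-duality reading of each $H_i=\overline{N(\alpha_i,W)-N(\X,W)}$ as a $2$-handle attached to $E(\X\cup\alpha,W)$ along its belt circle, i.e.\ a meridian of $\alpha_i$ --- are exactly what the statement requires, and the caveat you flag about transporting stabilizing disks is the right one but is indeed handled by the $\epsilon$-smallness hypothesis.
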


\begin{definition}

Let $X_1$ and $X_2$ be a pair of components in $\X$ which are $\phi$- and $\theta$-adjacent, with spanning arc of $\phi$-adjacency $\alpha$ and spanning annulus of $\theta$-adjacency $A$.  Let $D$ be disk embedded in $W$ such that $\partial D=\alpha\cup e_1\cup\beta\cup e_2$, where $e_i$ is the radial edge of $X_i$ nearest to $A$, $i=1,2$, $\beta\subset \partial W$ is an arc joining $e_1$ to $e_2$ which spans $A$, and $\mathring{D}\cap \X=\emptyset$.  Then $D$ is said to be a {\em spanning disk} of $X_1$ and $X_2$.  See Figure 3.

\end{definition}

\begin{figure}
    \centering
    \includegraphics[width=0.8\textwidth]{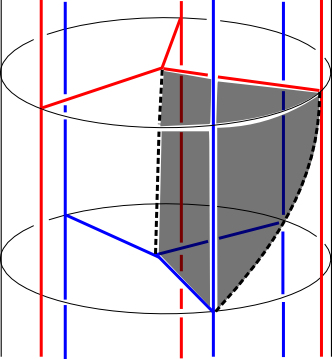}
    \caption{A spanning disk}
\end{figure}

\begin{definition}

Let $h_{\epsilon}:W\rightarrow W$ be the dilation $(\phi,r,\theta)\mapsto (\phi,(1-\epsilon)r,\theta)$.  Let $X$ be a stabilized spoke graph, let $\mathcal{L}$ be a union of longitudinal edges of $X$, let $\E$ be the union of those radial edges of $X$ which meet $\mathcal{L}$, and let $\mathcal{S}$ be the union of stabilizing arcs attached to $\E$.  Then the graph $X'$ obtained by removing $\E\cup \mathcal{L}\cup \mathcal{S}$ from $X$ and attaching $h(\E\cup \mathcal{L}\cup \mathcal{S})$ is said to be a {\em spoke graph obtained by $\epsilon$-small detachments of the longitudes $h(\mathcal{L})$}.  

\end{definition}

\begin{prop}

Suppose $X$ and $X'$ are dual stabilized spoke graphs embedded in $W$, that $\epsilon<\pi/2$, and that $d(X,X')>\epsilon$ (as usual $d$ is the flat metric).  Suppose $\X$ is an $\epsilon/8$-small, good decomposition of $X$, and that $\A=A_1\cup\cdots \cup A_k$ is an adjacency chain of annuli for $\X$.  Let $Y$ be a spoke subgraph of $X'$ which does not meet $\A$, and suppose $\Y$ is obtained from an $\epsilon/8$-small longitudinal detachments $Y$ followed by an $\epsilon/8$-small decomposition (which need not be ``good'').  Let $\A'$ be the subset of $E(\X\cup \Y,\partial W)$ consisting of those annuli which meet $Fr(\X)$ on one boundary component, and $Fr(\Y)$ on the other.  Then $E(\X\cup\Y,W)$ is a generalized compression body $V$ such that $\partial_vV=\A'$ and $\partial_-V=Fr(\Y)$.

\end{prop}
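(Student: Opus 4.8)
The plan is to realize $V=E(\X\cup\Y,W)$ as the result of four controlled operations applied to the product region furnished by Lemma 2.9, and to track the partition $\partial_+,\partial_-,\partial_v$ throughout. As a preliminary, since $d(X,X')>\epsilon$ with $\epsilon<\pi/2$ and every graph occurring is obtained from $X$ or $X'$ by $\epsilon/8$-small longitudinal detachments and decompositions, all of the relevant regular neighbourhoods are disjoint, so the operations below are supported in pairwise disjoint parts of $W$ and may be treated one at a time. For the first operation, let $Y^{*}$ be the spoke graph obtained from $Y$ by the prescribed longitudinal detachments, so that $\Y$ is an $\epsilon/8$-small decomposition of $Y^{*}$; let $\alpha$ be the binding arc of $\X$ as a decomposition of $X$, and $\beta$ that of $\Y$ as a decomposition of $Y^{*}$. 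By Observation 2.17 (applied twice, the two decompositions being supported in disjoint regions) $\X\cup\alpha$ is isotopic to $X$, $\Y\cup\beta$ to $Y^{*}$, and $E(\X\cup\Y,W)$ is obtained from $E(X\cup Y^{*},W)$ by attaching $2$-handles along meridians of $\alpha$ and $\beta$, all of which lie on $Fr(X\cup Y^{*})$. By Observation 1.3 and Proposition 1.7 these attachments preserve the property of being a generalized compression body and fix $\partial_-$; being disjoint from $\partial W$ they also fix the candidate vertical annuli. So it suffices to show that $E(X\cup Y^{*},W)$ is a generalized compression body with $\partial_-=Fr(Y^{*})$ and with vertical boundary the set of annuli of $E(X\cup Y^{*},\partial W)$ that run from $Fr(X)$ to $Fr(Y^{*})$.

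\emph{Step 2: the dual pair and the subgraph $Y$.} Since $X$ and $X'$ are dual, Lemma 2.9 gives $E(X\cup X',W)\cong Fr(X)\times I$, carrying $Fr(X')$ to $Fr(X)\times\{0\}$; by Proposition 1.9 the annuli $\partial W\cap E(X\cup X',W)$ may be made vertical, so this is a trivial compression body with $\partial_+=Fr(X)$, $\partial_-=Fr(X')$. Doubling along $\partial W$ and arguing exactly as in the proof of Lemma 2.9, $X'_d$ is a spine of the handlebody $E(X_d,S^1\times S^2)$. Because $Y$ is a spoke subgraph of $X'$ with a longitude on each component, $Y_d$ is a subgraph of that spine without simply connected components, so Proposition 1.15 makes $E(X_d\cup Y_d,S^1\times S^2)=D\big(E(X\cup Y,W)\big)$ a compression body. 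The doubling annuli $E(X\cup Y,\partial W)$ are incompressible, being essential in $\partial W$, and each is spanning or horizontal in this compression body since---by the vertical structure above and the local picture of filling in the spokes of $X'\smallsetminus Y$---one end lies on an $X$-longitude collar and the other on an $X$- or $Y$-longitude collar. Proposition 1.10 then shows that cutting the double along these annuli, which returns two copies of $E(X\cup Y,W)$, yields a generalized compression body; hence so is $E(X\cup Y,W)$. Filling in a spoke of $X'\smallsetminus Y$ buries its frontier into $\partial_+$ and nothing else into $\partial_-$, so $\partial_-E(X\cup Y,W)=Fr(Y)$ and the vertical annuli are exactly the annuli of $E(X\cup Y,\partial W)$ joining $Fr(X)$ to $Fr(Y)$; here the hypothesis $Y\cap\A=\emptyset$ ensures that the annuli of the adjacency chain stay intact and become horizontal parts of $\partial_+$ rather than vertical annuli.

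\emph{Step 3: detach the longitudes, and conclude.} Detaching a longitude $l$ of $Y$ replaces $E(X\cup Y,W)$, near $l$, by $E(X\cup Y,W)\cup N(l,W)\smallsetminus N(\hat l,W)$, where $\hat l$ is the pushed-in copy. The solid torus $N(l,W)$ is attached along an annulus of $Fr(Y)=\partial_-$, and deleting the tube $N(\hat l,W)$ about its core circle turns it into a copy of $T^2\times I$ one end of which is the new torus $\partial N(\hat l)\subset Fr(Y^{*})$ while the other returns to $\partial W$. Thus the operation keeps the manifold a generalized compression body, enlarges $\partial_-$ to $Fr(Y^{*})$, and removes $E(l,\partial W)$ from the vertical boundary (the two annuli flanking $l$ fuse, through the recovered strip of $\partial W$, into one horizontal annulus of $\partial_+$). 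After doing this for every detached longitude, the vertical boundary is precisely the collection of annuli around the undetached longitudes of $Y$, i.e.\ those components of $E(X\cup Y^{*},\partial W)$ running from $Fr(X)$ to $Fr(Y^{*})$. Reinstating the $2$-handles of Step 1, these annuli become $\A'$ and the negative boundary becomes $Fr(\Y)$, which proves the proposition.

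\emph{Main obstacle.} The crux is not any one homeomorphism but the bookkeeping of the boundary partition through Steps 2 and 3: proving that filling in a spoke of $X'\smallsetminus Y$ consumes \emph{exactly} its frontier (so that $\partial_-$ is all of $Fr(Y)$ and not less), that detaching a longitude removes the two flanking annuli---and only those---from $\partial_v$, and that what remains of $\partial_v$ is literally $\A'$. It is here that the hypotheses do their work: $d(X,X')>\epsilon$ together with $\epsilon<\pi/2$ for disjointness, goodness of the decomposition $\X$ to produce the adjacency chain $\A$ at all, and $Y\cap\A=\emptyset$ to keep $\A$ inside $\partial_+$. A subsidiary point requiring care is transferring the spine/subgraph statement of Propositions 1.14--1.15 across the doubling operation and back, and excluding (or directly handling) simply connected components of $Y_d$.
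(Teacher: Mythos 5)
Your overall strategy—double $W$, realize $X'_d$ as a spine, cut along $\partial W$, and handle the decompositions via handle attachments—does track the paper's proof, and Step~2 in particular reproduces the key doubling-plus-spine argument correctly. However, there are two places where the argument genuinely leaks, both concentrated in Step~1 (and its reinstatement at the end).

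First, you justify the 2-handle attachments coming from the decompositions $X\to\X$ and $Y^*\to\Y$ by invoking ``Observation 1.3 and Proposition 1.7.'' Observation 1.3 is about attaching a 1-handle along $\partial_+\V$ and does not apply: as Observation 2.15 makes explicit, passing from $E(X,W)$ to $E(\X,W)$ is a \emph{2-handle} attachment along the meridian annuli $Fr(\alpha_i)$. For these you do need Proposition 1.7, but Proposition 1.7 comes with hypotheses—a system of disks $\D$ meeting the attaching annuli primitively—that you never produce. The paper produces them as the spanning disks $D_i$ of Definition 2.16, chosen disjoint from $\Y$, and this is precisely where the hypothesis $Y\cap\A=\emptyset$ does its work; it is not enough to remark (as you do in the closing paragraph) that the hypothesis ``keeps $\A$ inside $\partial_+$.'' Second, and more seriously, the 2-handles for $Y^*\to\Y$ are attached along $Fr(\Y\cup\beta)\subset\partial_-$, whereas Proposition 1.7 explicitly requires its annuli to lie in $\partial_+\V$. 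So neither cited result covers this attachment. The paper avoids this by performing the $\Y$-decomposition \emph{before} cutting along $\partial W$—i.e.\ inside the double, where the ambient object is an honest compression body and the attachment is along $\partial_-$ of a compression body, a standard operation that preserves the compression-body structure. In your ordering you need the analogous statement for \emph{generalized} compression bodies with the attaching region kept away from $\partial_v$; that statement is true but is nowhere in the paper, and you do not prove it.

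Your Step~3, handling longitudinal detachment by directly gluing a $T^2\times I$ onto $\partial_-$ and tracking the boundary partition, is a genuinely different route from the paper, which absorbs detachment into the doubling picture: the detached graph $Y'$ is still isotopic in $S^1\times S^2$ to a subgraph of the spine $X'_d$, so Proposition 1.15 applies at once, and no separate detachment step is needed. Your local description is plausible, but as written the claim that the $T^2\times I$ attachment along an annulus of $\partial_-$ preserves the generalized compression body structure is asserted rather than proved; and because the doubles $Y_d$ and $Y^*_d$ are \emph{not} isotopic graphs in $S^1\times S^2$ (the detached longitude doubles to two circles, the attached one to a single circle on $\partial W$), this step cannot be waved away by appealing to ``the same doubling argument.'' If you want to keep your ordering, you would need to either give the $T^2\times I$-attachment argument carefully, or—more in line with the paper—start from $Y'$ rather than $Y$ in Step~2, which requires no extra work beyond observing that $Y'_d$ is isotopic to a subgraph of $X'_d$.
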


\begin{proof}

We have chosen the various stabilizations and detachments small enough to ensure that they can all be carried simultaneously without creating new intersections. As in the proof of Lemma 2.9, we double $W$ to obtain $S^1\times S^2$, let $X_d$ denote the double of $X$, and let $X_d'$ be the double of $X'$.  Then if $Y'$ is the graph obtained by detaching some of the longitudes of $Y$, it remains isotopic to a subgraph of $X_d'$, which is a spine of $E(X_d,S^1\times S^2)$.  Thus $E(X_d\cup Y',S^1\times S^2)$ is a compression body with negative boundary $\partial N(Y')$.  

If $\Y$ is any $\epsilon/8$-small decomposition of $Y'$, then $E(X_d\cup \Y, S^1\times S^2)$ is also compression body because Observation 2.15 tells us that it is obtained from $E(X_d\cup Y',S^1\times S^2)$ via $2$-handle attachments along $\partial N(Y',W)=\partial_-E(X_d\cup Y',S^1\times S^2)$.  The annuli $\A'$ are the spanning annuli in the collection $(\partial W)\cap E(X\cup \Y,S^1\times S^2)$ of incompressible annuli properly embedded in $E(X_d\cup \Y, S^1\times S^2)$, and thus form the vertical boundary of the generalized compression body $V'=E(X\cup \Y, W)$ cut off by $\partial W\cap E(X\cup\Y,S^1\times S^2)$.  

Order the components of $\X=X_1\cup\cdots \cup X_{k+1}$ so that $A_i$ meets $N(X_i,W)$ and $N(X_{i+1},W)$ for all $1\leq i\leq k$.  The hypothesis that $Y$ does not meet $\A=A_1\cup\cdots\cup A_k$ implies the existence of a spanning disk $D_i$ for each pair $X_i$, $X_{i+1}$ which is disjoint from $\Y$.  If $\alpha_i$ is the spanning arc of $\phi$-adjacency for $X_i$, $X_{i+1}$, then the meridian disk $D_i'$ of $\alpha_i$ in $N(\X\cup\alpha_1\cup\cdots \cup \alpha_k\cup \Y,W)$ meets the disk $D_i\cap E(\X\cup\alpha_1\cup\cdots \cup \alpha_k\cup \Y,W)$ in a single point.  Thus by Proposition 1.7, $E(\X\cup \Y,W)$, which is obtained by attaching $N(D_i')$ to $E(\X\cup\alpha_1\cup\cdots \cup \alpha_k\cup \Y,W)$ for each $1\leq i\leq k$, is also a generalized compression body.

\end{proof}

\section{The Doppelg\"{a}nger}

\begin{conv}
Throughout this section, $M$ is a compact orientable $3$-manifold, $\G$ is a generalized Heegaard surface of $M$, $T$ is a separating essential torus properly embedded in $M$, and $E(\G,M)=M_1\cup M_2$.
\end{conv}

%\begin{definition}
%Let $\G_1=\G\cap M_1$.  Then the {\em signed connectivity graph} associated with $(M_1,\G_1)$, denoted $\Gamma$, is the graph which has one vertex for each component of $E(\G_1,M_1)$, and for each component $G$ of $\G_1$, an edge whose endpoints lie on the vertices of $\Gamma$ corresponding to the components of $E(\G_1,M_1)$ that meet $N(G,M_1)$. Furthermore, we label each edge with a ``$+$'' if the associated component of $\G_1$ is a subsurface of a thick component of $\G$, and label the edge with a ``$-$'' otherwise.
%\end{definition}

%\begin{definition}

%Suppose the $\G\cap T$ consists only of simple closed curves which are essential in $T$, and let $\mu$ be an oriented simple closed curve embedded in $T$ which meets each component of $\G\cap T$ transversely in a single point. Then a {\em meridional edge path} on $\Gamma$ is the closed edge path in $\Gamma$ which traverses the vertices of $\Gamma$ in the same order that $\mu$ traverses the corresponding components of $E(\G,M_1)$. 

%\end{definition}

\begin{definition}

$\G$ and $T$ are said to be {\em well-configured} with respect to $M_1$ if the following conditions hold:

\begin{enumerate}

\item $\G\cap T$ consists only of simple closed curves which are essential in $T$ and $\G$,
\item Each component of $\G\cap M_1$ separates $M_1$,
\item For each component $V$ of $E(\G,M)$, $T\cap V$ consists only of annuli which are spanning or horizontal.

\end{enumerate}

\end{definition}

%\begin{obs}
%If $\G$ and $T$ are well-configured with respect to $M_1$, then condition (1) of Definition 3.3 ensures the existence of a meridional edge path on $\Gamma$ (which is unique up to cyclic permutation and order reversal), condition (2) implies that $\Gamma$ is a tree, and condition (3) tells us that a meridional edge path cannot traverse two ``$-$'' edges consecutively.
%\end{obs}

\begin{conv}

For the remainder of the section, we assume that $\G$ and $T$ are well-configured with respect to $M_1$.

\end{conv}

\begin{obs}
Let $\Hf=E(\G,M)$, which is a disjoint union of compression bodies, let $\A=Fr(T\cap \Hf,\Hf)$, $\A^1=\A\cap M_1$, and $\A^2=\A\cap M_2$.  Then conditions (1) and (3) of Definition 3.3, together with Proposition 1.10, imply that $\V=E(T\cap \Hf,\Hf)$ is a generalized compression body satisfying $\partial_v\V=\A_s$, where $\A_s$ is the union of spanning annuli in $\A$.  Moreover, if $\A_h=\A-\A_s$ is the subset of horizontal annuli in $\A$, and $\A_h^i=\A^i\cap A_h$, $i=1,2$, then there is a primitive disk set $\D$ for $\V$ with respect to some ordering of $\A_h^1\cup\A_h^2$
\end{obs}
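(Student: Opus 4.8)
The plan is to reduce the statement to a component-by-component application of Proposition 1.10. First I would note that $\G\cap T\neq\emptyset$: otherwise $T$ would lie in the interior of a single component $V$ of $\Hf$, and $T\cap V=T$ would fail to be a union of annuli, contradicting condition (3) of Definition 3.3. So by condition (1), $\G\cap T$ is a nonempty collection of pairwise disjoint essential simple closed curves in the torus $T$, hence a collection of mutually parallel curves, and $T\cap\Hf=\overline{T-N(\G,M)}$ is a disjoint union of annuli, each with core essential in $T$. Each such annulus $A$ is incompressible in $\Hf$, since a compressing disk $D\subset\Hf$ for $A$ would have $\partial D$ essential in $A$ and therefore essential in $T$, so $D$ would be a compressing disk for $T$ in $M$ — impossible, as $T$ is essential. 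Together with condition (3), this says that for each component $U$ of $\Hf$ the surface $T\cap U$ is a disjoint union of incompressible annuli, each spanning or horizontal in the compression body $U$: exactly the hypotheses of Proposition 1.10.

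Applying Proposition 1.10 to each component $U$ of $\Hf$ (with $\A_s^U,\A_h^U$ the spanning and horizontal parts of $T\cap U$), I get that $\V_U=E(T\cap U,U)$ is a generalized compression body with $\partial_v\V_U=Fr(\A_s^U)$ and $Fr(\A_h^U)\subset\partial_+\V_U$, and that for a suitable ordering of the components of $\A_h^U$ the frontier copies $Fr(\A_h^U)$ — paired so that the two copies of each horizontal annulus constitute one pair — admit a primitive disk set $\D^U$ in $\V_U$. Taking the disjoint union over the components of $\Hf$, and using that a disjoint union of generalized compression bodies is again one (Definition 1.1), gives that $\V=E(T\cap\Hf,\Hf)$ is a generalized compression body. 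To match notation: since $T$ separates $M$, each annulus of $T\cap\Hf$ has one frontier copy to each side of $T$, so $\A=Fr(T\cap\Hf)$ splits into $\A^1=\A\cap M_1$ and $\A^2=\A\cap M_2$; the frontier copies that become vertical boundary are exactly the copies of spanning annuli, so $\partial_v\V=\bigsqcup_U Fr(\A_s^U)=\A_s$, while the copies of horizontal annuli satisfy $\A_h=\bigsqcup_U Fr(\A_h^U)\subset\partial_+\V$, with each horizontal annulus of $T\cap\Hf$ contributing one annulus to $\A_h^1$ and one to $\A_h^2$.

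Finally, for the primitive disk set, I would concatenate the orderings produced by Proposition 1.10 over the components of $\Hf$ (the order among components being immaterial) and set $\D=\bigsqcup_U\D^U$. The conditions of Definition 1.4 hold inside each $\V_U$ by Proposition 1.10, and they hold trivially between distinct components since $\V_U\cap\V_{U'}=\emptyset$ for $U\neq U'$; hence $\D$ is a primitive disk set for the paired union $\A_h^1\cup\A_h^2$ in $\V$ with respect to this ordering. The only place any real work is needed is the incompressibility of the annuli $T\cap\Hf$; everything else is an invocation of Proposition 1.10 together with bookkeeping, so that is where I expect the (modest) main obstacle to be.
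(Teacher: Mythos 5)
The paper states this as an unproved Observation, pointing only to conditions (1) and (3) of the well-configured definition together with Proposition 1.10; your proof is precisely the argument being gestured at, carried out component-by-component on $\Hf$ and then assembled by disjoint union. The only step that isn't pure bookkeeping is the incompressibility of the annuli $T\cap\Hf$, and you correctly identify it and give the right argument (a compressing disk would give a compression of the essential torus $T$). Two tiny remarks, neither a real gap: first, if you want to be fully rigorous about incompressibility you should note that a compressing disk $D$ for one annulus component might a priori meet other components of $T\cap\Hf$ in its interior, but an innermost-disk argument in the irreducible compression body removes this; second, your opening claim that $\G\cap T\neq\emptyset$ under condition (3) is fine on the strict reading, but isn't actually needed for the Observation --- if $T\cap\Hf$ were empty all the assertions would hold vacuously --- so it can be dropped. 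Otherwise the proposal is correct and matches the paper's intended route.
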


\begin{conv}
The notation of Observation 3.4 is fixed for the remainder of the section.  Moreover, we fix a choice of a primitive disk set $\D=D_1\cup\cdots\cup D_n$, which imposes the primitive orderings $\A_h^i=A_1^i\cup\cdots \cup A_n^i$, $i=1,2$.  Here it is understood that, for all $1\leq j\leq n$, $\A_j^1\cup \A_j^2$ is the frontier of a single component of $T\cap \Hf$.
\end{conv}

\begin{definition}

Let $V$ be a component of $E(T\cap \Hf,\Hf)$.  Let $\D^V=\D\cap V$, let $\A^V=\A\cap V$, $\A_s^V=\A_s\cap V=\partial_vV$, $\A_h^V=\A_h\cap V$, and let $\A_p^V$ consist of those components of $\A_h^V$ which are dual to some component of $\D^V$.

\end{definition}

\begin{lem}

For every component $V$ of $\V$, $\partial_+V-\A_p^V$ is connected.

\end{lem}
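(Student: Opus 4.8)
The plan is to use the boundaries of the disks in $\D^V$ as ``detecting curves'' showing that the core circles of the annuli in $\A_p^V$ cannot together separate $\partial_+V$. Note first that since $V$ is a \emph{connected} generalized compression body it is built from $F\times I$ for a single connected surface $F$ by attaching $2$- and $3$-handles (along curves and along spheres, neither of which can join distinct components), so $\partial_+V=F\times\{1\}$ is connected to begin with; the only way $\partial_+V-\A_p^V$ could fail to be connected is if the annuli of $\A_p^V$ disconnect it.

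I would set up notation as follows. Using the fixed primitive ordering of $\D=D_1\cup\cdots\cup D_n$ from Convention 3.5, list the disks of $\D^V=\D\cap V$ as $D_{j_1},\dots,D_{j_m}$ with $j_1<\cdots<j_m$; for each $k$ let $A_{j_k}\in\A_p^V$ be the component dual to $D_{j_k}$ and $c_k\subset\partial_+V$ its core circle. (That the annuli of $\A_p^V$ really lie on $\partial_+V$ is part of the output of Observation 3.4, via Proposition 1.10.) Two properties of a primitive disk set survive the restriction to $V$ and are all that is needed: (i) $\partial D_{j_k}$ meets $c_k$ transversely in a single point, because $D_{j_k}\cap A_{j_k}$ is a single spanning arc of $A_{j_k}$; and (ii) for $k<l$ the curve $\partial D_{j_k}$ is disjoint from $c_l$, because $j_k<j_l$ forces $D_{j_k}\cap(A_{j_l}^1\cup A_{j_l}^2)=\emptyset$ while $A_{j_l}$ is one of $A_{j_l}^1,A_{j_l}^2$.

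The main argument is then a parity count. Suppose for contradiction that $\partial_+V-\A_p^V$ is disconnected. Removing a closed annular neighborhood of a two-sided simple closed curve changes the number of components of a surface in exactly the same way as cutting along that curve, so equivalently $\partial_+V$ cut along $c_1\cup\cdots\cup c_m$ is disconnected. Hence there is a proper, nonempty, compact codimension-zero subsurface $S_0\subsetneq\partial_+V$ with $Fr(S_0,\partial_+V)=\bigcup_{k\in D}c_k$ for some $D\subseteq\{1,\dots,m\}$; since $\partial_+V$ is connected, $D\neq\emptyset$. Let $k^\ast=\min D$. The loop $\partial D_{j_{k^\ast}}$ is transverse to the two-sided $1$-manifold $Fr(S_0,\partial_+V)$, so it crosses it an even number of times; but by (ii) it misses $c_k$ for every $k\in D$ with $k>k^\ast$, and by (i) it crosses $c_{k^\ast}$ exactly once, so it crosses $Fr(S_0,\partial_+V)$ exactly once. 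This contradiction proves that $\partial_+V-\A_p^V$ is connected.

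I expect the delicate point to be the bookkeeping in the second paragraph: one must check that the global primitive ordering on $\D$ really does restrict so that (i) and (ii) hold \emph{simultaneously} for all relevant disk--core pairs inside $V$, and one must be sure that each $c_k$ lies on $\partial_+V$ rather than on a $\partial_v$ or $\partial_-$ piece. Once those points are settled, the remainder is the standard observation that a separating $1$-submanifold is crossed an even number of times by every closed loop.
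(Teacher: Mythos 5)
Your argument is correct and is essentially the same as the paper's: both rest on the primitive ordering of $\D$ and the parity observation that $\partial D_{j_k}$ crosses $c_k$ once and misses $c_l$ for $l>k$. The paper phrases it as an induction showing each $A_i$ touches only one component of $\overline{\partial_+V-\A_p^V}$, while you phrase it as a single minimal-index parity contradiction; these are just two packagings of the same idea.
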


\begin{proof}

In the case $|\A_p^V|=0$ there is nothing to prove, so assume $|\A_p^V|>0$.  Order the components of $\D^V=D_1\cup\cdots \cup D_k$ so that $i<j$ if $D_i$ has lower index than $D_j$ with respect to the primitive ordering of $\D$.  Similarly, order $\A_p^V=A_1\cup\cdots \cup A_k$ so that $i<j$ if $A_i$ has lower index than $A_j$ with respect to the primitive ordering of $\A_h$ (so $D_i$ is dual $A_i$ for all $1\leq i\leq k$).  The fact that $\D$ is primitive implies that $\partial D_1$ meets $\A_p^V$ only in a single spanning arc of $A_1$, so that $A_1$ meets a single component of $\overline{\partial_+V-\A_p^V}$.  Likewise, $\partial D_2$ is disjoint from $\A_p^V-(A_1\cup A_2)$ and meets $A_2$ only in a single spanning arc.  Since $\partial D_2$ does not change the component of $\partial_+V-\A_p^V$ on which it lies when it passes through $A_1$, it follows that $A_2$ also meets the same component of $\overline{\partial_+V-A_p^V}$ on each side.  Continuing in this way for the remaining components of $\A_p^V$, we see that every component of $\A_p^V$ meets a single component of $\overline{\partial_+V-\A_p^V}$.  Since $\partial_+V$ is connected, this implies that $\partial_+V-\A_p^V$ is also connected.

\end{proof}

\begin{definition}

Let $V$ be a component of $\V$, and index the annuli $\A_h^V=A_1\cup\cdots\cup A_m$ so that $i<j$ implies that $A_i$ has lower index than $A_j$ with respect to the primitive ordering on $\A_h$.  A set $\mathbf{A}=\{A_{i_1},\ldots , A_{i_k}\}$ of components of $\A_h^V-\A_p^V$ is said to be {\em connective} if $(\partial_+V-\A_h^V)\cup A_{i_1}\cup\cdots\cup A_{i_k}$ is connected.  Moreover, $\mathbf{A}$ is {\em minimal} if (1) $|\mathbf{A}|$ is minimal among all connective sets of $V$ and, (2) for every other connective set of annuli $\mathbf{A}'=\{A_{j_1},\ldots ,A_{j_k}\}$ satisfying $|\mathbf{A}|=|\mathbf{A}'|$, $i_l\leq j_l$ for all $1\leq l\leq k$.  Note that conditions (1) and (2) define a unique minimal connective set with respect to any given ordering.

\end{definition}

\begin{conv}

For the remainder of the section, let $W\cong S^1\times D^2$ be a solid torus, parameterized as in Convention 2.1.  Furthermore, let $T_i=Fr(T)\cap M_i$, $i=1,2$ and let $h:T_1\rightarrow \partial W$ be a homeomorphism such that each component of $h(\G\cap T_1)$ is a longitude of the form $S^1\times\{x\}$.  Let $\pi:T_2\rightarrow T_1$ be the projection which collapses $T_2$ onto $T_1$ along the $I$-fibers of $N(T,M)$ (we assume that $\pi(T_2\cap \G)=T_1\cap \G$).  We let $M'=W\cup_{h\circ \pi}M_2$ for the remainder of the section.

\end{conv}

\begin{obs}

If a component $V$ of $\V\cap M_1$ meets $T_1$ at all, then $\partial_+V$ must meet $T_1$, since the annuli of $\A^V=T_1\cap V$ are all either horizontal or spanning.  However, it is possible that $\A^V$ consists entirely of spanning annuli, so that $\partial_-V$ does not meet $T_1$, and this is a case which requires special treatment at certain points in our construction.

\end{obs}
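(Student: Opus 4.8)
The plan is simply to unpack Observation 3.4; the statement carries no geometric content beyond the structure of $\V = E(T\cap\Hf,\Hf)$ already recorded there. By Observation 3.4 — which is Proposition 1.10 applied componentwise to the compression bodies making up $\Hf = E(\G,M)$ — the frontier $\A = Fr(T\cap\Hf,\Hf)$ is the union of $\A_s = \partial_v\V$ (the frontier of the spanning components of $T\cap\Hf$) with $\A_h$ (the frontier of the horizontal components), and $\A_h\subseteq\partial_+\V$. Intersecting with the copy $T_1\subset M_1$ and restricting to a single component $V$ of $\V\cap M_1$ gives $\A^V = T_1\cap V = \A_s^V\cup\A_h^V$, where $\A_s^V = \partial_vV$ and $\A_h^V\subseteq\partial_+V$. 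So every annulus of $\A^V$ is either a component of $\partial_vV$ — a ``spanning'' annulus of $V$, carrying one boundary circle on $\partial_+V$ and one on $\partial_-V$ — or an annulus lying inside $\partial_+V$; either way it carries a boundary circle on $\partial_+V$. Hence, if $\A^V\neq\emptyset$, then $\partial_+V\cap T_1\neq\emptyset$, which is the first assertion.

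For the second assertion I would note that the inclusion $\A^V\subseteq\partial_vV\cup\partial_+V$ shows that no annulus of $T_1$ is a subsurface of $\partial_-V$: the intersection $\partial_-V\cap T_1$ is contained in the union of circles $\partial_-V\cap\partial_vV$. In particular, in the extremal case $\A_h^V = \emptyset$ — when $\A^V = \A_s^V$ consists entirely of spanning annuli — $\partial_-V$ does not meet $T_1$ in any surface. That this case genuinely occurs is already permitted by clause (3) of Definition 3.3, which allows $T$ to meet a component of $\Hf$ only in spanning annuli, and it is easily realized; the remark that it needs special treatment points forward to the fact that the doppelg\"{a}nger construction will handle a component $V$ of $\V\cap M_1$ according to whether or not a piece of $\partial_-V$ lies alongside $T_1$.

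I do not expect a real obstacle here: the statement is a bookkeeping consequence of the earlier setup. The only thing worth checking carefully is that no component of $T\cap\Hf$ has both of its boundary circles on $\partial_-\Hf$ — so that nothing of $T_1$ is carried into the interior of $\partial_-\V$ — and this is exactly the ``spanning or horizontal'' dichotomy of Definition 3.3(3) together with the assertion ``$Fr(\A_h)\subset\partial_+\V$'' of Proposition 1.10.
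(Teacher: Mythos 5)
Your unpacking of the first assertion is correct and is in substance the paper's own inline justification: by Observation 3.4 and Proposition 1.10, $\A^V=\A_s^V\cup\A_h^V$ with $\A_s^V=\partial_vV$ and $\A_h^V\subseteq\partial_+V$, so every annulus of $\A^V$ carries a boundary circle on $\partial_+V$, and $\A^V\neq\emptyset$ forces $\partial_+V\cap T_1\neq\emptyset$.

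Your handling of the second assertion, though, quietly weakens ``$\partial_-V$ does not meet $T_1$'' to ``$\partial_-V$ does not meet $T_1$ in any surface,'' and then observes the latter holds for \emph{every} $V$, not only in the all-spanning case — so the argument never actually explains the conditional, and the hedge signals you noticed this. The weakened reading is also inconsistent with how ``meets $T_1$'' is used in Definitions 3.13 and 3.14, where a component $F$ of $\overline{\partial V-\A^V}$ (in particular of $\partial_-V$) meets $T_1$ precisely when $\partial F$ shares a circle with $\A^V$. Under that reading each spanning annulus of $\A^V=\partial_vV$ touches a component of $\partial_-V$ in a boundary circle, so if $\A^V$ consists entirely of spanning annuli then $\partial_-V$ certainly \emph{does} meet $T_1$ — the opposite of what the observation asserts. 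The configuration that actually triggers the special case $\B_-^V=\emptyset$ in part (5) of Definition 3.14 is $\A_s^V=\emptyset$, i.e.\ $\A^V$ consisting entirely of \emph{horizontal} annuli, which leaves $\partial_-V$ a closed surface disjoint from $T_1$. In short, ``spanning'' in the statement appears to be a slip for ``horizontal,'' and a careful reading should flag that rather than reinterpret ``meets'' to make the literal statement vacuously true.
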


%\begin{rmk}

%The main purpose of this section is to show that $g(M)\geq g(M')$.  To this end we will construct a surface $\Q\subset W$ to serve as a substitute for $\G\cap M_1$, so that $\Q\cup (\G\cap M_2)$ is again a generalized Heegaard surface in $M'$, one which amalgamates to a Heegaard surface of genus no higher than that of the amalgamation of $\G$ in $M$.

%\end{rmk}

\begin{definition}

For each component $V$ of $\V\cap M_1$ which meets $T_1$, let $\B^V$ denote $\overline{\partial W-h(\A^V)}$ which is a union of annuli.  

\end{definition}

\begin{lem}

For each component $V$ of $\V\cap M_1$, and each component $B$ of $\B^V$, both components of $h^{-1}(\partial B)$ lie on the same component of $\overline{\partial V-\A^V}$.  

\end{lem}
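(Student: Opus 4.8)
The plan is to run a connectivity argument on $\partial V$, organized around the cyclic structure of the annuli that $h(\G\cap T_1)$ cuts $\partial W$ into, in the spirit of the proof of Lemma 3.7. Since $h(\G\cap T_1)$ consists of longitudes of $W$, these curves cut $\partial W$ into a cyclic family of vertical annuli $Q_1,\dots,Q_N$; each $\hat Q_j:=h^{-1}(Q_j)$ is one of the annuli into which $\G\cap T_1$ cuts $T_1$, hence lies on the boundary of a unique component of $\V\cap M_1$, which I call its \emph{owner}. So $h(\A^V)$ is the union of the $Q_j$ owned by $V$, and a component $B$ of $\B^V$ is a maximal run $Q_a\cup\dots\cup Q_b$ of consecutive annuli whose owners are all $\neq V$, with $Q_{a-1}$ and $Q_{b+1}$ owned by $V$. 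Write $A_1=\hat Q_{a-1}$ and $A_2=\hat Q_{b+1}$ (components of $\A^V$), let $\ell_1,\ell_2$ be the two longitudes bounding $B$, and put $c_i=h^{-1}(\ell_i)$, so $h^{-1}(\partial B)=c_1\cup c_2$ with $c_i\subset\partial A_i$. In $\partial V$ the curve $c_i$ separates $A_i$ from a component $R_i$ of $\overline{\partial V-\A^V}$ (indeed, the boundary of $\overline{\partial V-\A^V}$ is exactly $h^{-1}(\partial\B^V)$), and the claim is $R_1=R_2$.

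The first thing I would pin down is that $c_1,c_2\subset\partial_+V$, so that $R_1,R_2$ are components of $\overline{\partial_+V-\A_h^V}$. This is immediate when $A_i$ is horizontal. When $A_i\in\A_s^V=\partial_vV$ is spanning, $c_i$ is one of its two boundary curves and I must exclude the case $c_i\subset\partial_-V$: in that case $c_i$ would lie on a component $F$ of $\partial_-V$, a thin surface of $\G$, which carries a single piece of $E(\G,M)$ on each side; but the annulus $\hat Q_a$ (or $\hat Q_b$) lying across $c_i$ from $A_i$ is owned by a component of $\V\cap M_1$ distinct from $V$, and tracing how the thick and thin surfaces of $\G$ interleave the pieces of $E(\G,M)$ near $F$ produces a contradiction.

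The heart of the proof is then to show $R_1=R_2$ by exhibiting an arc $\gamma$ in $\overline{\partial_+V-\A_h^V}$ from $c_1$ to $c_2$. Because $\partial_+V$ is connected one can begin with an arc in $\partial_+V$ joining $c_1$ to $c_2$ that runs alongside the run $B$, so that by maximality of $B$ it meets $\A^V$ only near its ends at $A_1$ and $A_2$; then, exactly as in the proof of Lemma 3.7, I would use the dual disks $\D^V$ and the primitive ordering on $\A_h^V$ to re-route $\gamma$ across the annuli of $\A_p^V$ without ever changing the component of $\overline{\partial_+V-\A_h^V}$ in which it lies. Maximality of $B$ is what keeps $\gamma$ clear of the non-dual annuli of $\A_h^V-\A_p^V$, across which Lemma 3.7 gives no control.

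The step I expect to be the main obstacle is this last one: Lemma 3.7 only gives connectedness of $\partial_+V-\A_p^V$, and a non-dual horizontal annulus really can disconnect $\partial_+V$, so the content of the lemma is precisely that no such annulus can come between the two ends of a single run $B$. Pinning this down should reduce to comparing the cyclic order of the $\hat Q_j$ around $T_1$ with the primitive ordering on $\A_h^V$ and verifying that the collar of $\overline{\partial_+V-\A_h^V}$ alongside $B$ is connected. I also anticipate that the first step will need some care, since it is the one place where the separating-torus hypothesis on $T$, and the way $\G$'s thick and thin surfaces interleave the pieces of $E(\G,M)$, genuinely come into play.
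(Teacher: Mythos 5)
The paper's argument is short and global: it takes a spanning arc $\alpha$ of $h^{-1}(B)$ together with an arc $\beta\subset V$ with $\partial\beta=\partial\alpha$, observes that the closed curve $\alpha\cup\beta$ meets $F_1$ transversally in exactly one point (after pushing into the interior, the curve passes from the far side of $F_1$ through to the $V$-side precisely once, near the endpoint $p_1$), and notes that this contradicts condition (2) of Definition 3.2 (each component of $\G\cap M_1$, and hence each component of $\overline{\partial V-\A^V}$, separates $M_1$). Your proposal attacks the problem by an entirely different route, a connectivity argument on $\partial V$ modelled on Lemma 3.7, and I think it has two genuine gaps.

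First, your opening step --- showing $c_1,c_2\subset\partial_+V$ --- asserts more than the lemma claims, and I do not believe it is true in general. If $A_i\in\A_s^V$ is a spanning annulus, its $\partial_-V$ boundary curve can perfectly well be a component of $h^{-1}(\partial B)$: nothing in Definition 3.2 or Convention 3.3 prevents the part of $T_1$ just across a $\partial_-V$-curve of a spanning annulus from belonging to $\B^V$. The lemma is consistent with this --- it then asserts both curves lie on the \emph{same} component of $\partial_-V$ --- and the paper's separating argument handles that case with no change. Your sketch of a contradiction (``tracing how the thick and thin surfaces of $\G$ interleave the pieces of $E(\G,M)$ near $F$'') is too vague to evaluate, and I see no reason such an interleaving is impossible.

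Second, even granting the first step, the connectivity argument you identify as ``the heart of the proof'' and also as ``the main obstacle'' is where the real content would have to live, and you offer no mechanism for it. Lemma 3.7 controls only $\partial_+V-\A_p^V$; a non-dual horizontal annulus $A_l\in\A_h^V-\A_p^V$ genuinely can separate $\partial_+V$, and nothing in the cyclic arrangement of the $\hat Q_j$ around $T_1$ or in the primitive ordering of $\A_h^V$ by itself forces the two ends of the run $B$ into the same complementary component. The fact that they are forced together is exactly what the separating hypothesis supplies, and your outline never invokes that hypothesis at the step where it is needed. (You mention ``the separating-torus hypothesis on $T$,'' but the hypothesis the lemma actually rests on is that each component of $\G\cap M_1$ separates $M_1$; these are not the same thing.) I would recommend abandoning the connectivity route and instead building the contradiction directly: a spanning arc of $h^{-1}(B)$ closed up by an arc through $V$ gives a loop meeting $F_1$ algebraically once.
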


\begin{proof}

If the curves of $h^{-1}(\partial B)$ lie on distinct components $F_1,F_2$ of $\partial V-\A^V$, then we can construct an embedded curve in $M_1$ which is the union of a spanning arc $\alpha$ of $h^{-1}(B)$ and an arc $\beta$ properly embedded in $V$ with $\partial \alpha=\partial\beta$.  This curve would then intersect the surface $F_1$ in a single point, which contradicts the assumption that each component of $\G\cap M_1$ (and hence each component of $\overline{\partial V-\A^V}$), is separating in $M_1$.

\end{proof}

%\begin{definition}

%For each component $V$ of $\V\cap M_1$, let $\G^V$ denote the union of those components $G$ of $\G\cap M_1$ such that $N(G,M_1)\cap V\neq \emptyset$, and let $\G_+^V$, $\G_-^V$ be those components of $\G^V$ which lie in $\G_+$ and $G_-$, respectively.

%\end{definition}

\begin{definition}

Let $V$ be a component of $\V\cap M_1$, and let $F$ be a component of $\overline{\partial V-\A^V}$ that meets $T_1$.  Let $\B_F$ be the union of those components $B$ of $\B^V$ such that $h^{-1}(\partial B)\subset F$.  If $X$ is a connected, stabilized spoke graph, possibly with detached longitudes, whose non-detached longitudinal edges are the core curves of $\B_F$, then $X$ is said to be a {\em doppelg\"{a}nger spoke graph} for $F$.

\end{definition}

\begin{definition}

Let $V$ be a component of $\V\cap M_1$, and let $\X\cup\Y$ be a spoke graph constructed as follows:

\begin{enumerate}

\item Let $\B_+^V$ be the union of those annuli in $\B^V$ which appear in $\B_F$ for some component $F$ of $\overline{\partial_+V-\A^V}$, let $X$ be a connected stabilized spoke graph whose longitudinal edges are the core curves of $\B_+^V$, and let $X'$ be its dual.  Suppose $d(X,X')=\epsilon<\pi/2$, where $d$ is the flat metric on $W$ as in Lemma 2.17.

\item Suppose $\mathbf{A}=\{A_{i_1},\ldots A_{i_k}\}$ is the minimal connective set of components of $\A_h^V-\A_p^V$ defined in Definition 3.8.  For some pair of components of $\overline{\partial_+V-\A^V}$, label them $F_j$ and $F_{j+1}$, $\partial A_{i_j}$ has one component in $\partial F_j$ and the other in $\partial F_{j+1}$.  Since $\mathbf{A}$ was chosen to be minimal, $F_j$ will be distinct from $F_l$ whenever $j\neq l$, (otherwise we could remove an element of $\mathbf{A}$ and still have a connective set).  On the other hand, since $\mathbf{A}$ is connective, $\overline{\partial_+V-\A^V}=F_1\cup\cdots \cup F_{k+1}$.  Thus there exists an $\epsilon/8$-small decomposition $\X$ of $X$ such that $\X=X_{F_1}\cup\cdots \cup X_{F_{k+1}}$, where $X_{F_j}$ is a doppelg\"{a}nger spoke graph of $F_j$, and moreover $\X$ can be chosen so that $C_1\cup\cdots \cup C_k$ forms an adjacency chain for $\X$, where here $C_j$ denotes the component of $\partial W-\mathring{N}(X,W)$ which contains $h(A_{i_j})$ (see Definition 2.12). 

\item Let $\B_-^V=\B^V-\B_+^V$, which is the union of those annuli in $\B^V$ which appear in $\B_F$ for some component $F$ of $\partial_-V$ that meets $T_1$.  Every component of $\partial W-X$ contains at most one component of $\B_-^V$, for if two components of $\B_-^V$ both lied in the same component of $\partial W-X$, this would imply the existence of a component of $\A^V$ whose boundary components both lie in $\partial_-V$, contrary to part (3) of Definition 3.2.  Thus we may assume that the core curves of $\B_-^V$ form a subset of the longitudinal edges of the dual $X'$ of $X$.

\item  Define the {\em prohibited} longitudinal edges of $X'$ to be those which lie in the same component of $\partial W-X$ as a component of $h(\A_p^V)\cup h(A_{i_1})\cup\cdots \cup h(A_{i_k})$ (there is at most one component of this set lying inside each component of $\partial W-X$).  A subgraph $Y'$ of $X'$ is said to be {\em admissable} if it possesses every core curve of $\B_-^V$ as a longitudinal edge, but no prohibited longitudinal edges.

\item It is possible that $\B_-^V=\emptyset$, in which case we set $\Y=\emptyset$.  Otherwise, let $F_1',\ldots ,F_l'$ be the components of $\partial_-V$ which meet $T_1$.  Let $Y'$ be an admissible subgraph of $X'$, and let $Y$ be obtained from $Y'$ via an $\epsilon/8$-small detachment of those longitudes of $Y'$ which are not core curves of $\B_-^V$.   Finally, let $\Y=Y_{F_1'}\cup\cdots \cup Y_{F_l'}$ be an $\epsilon/8$-small decomposition of $Y$, where $Y_{F_j'}$ is a doppelg\"{a}nger spoke graph for $F_j'$, $1\leq j\leq l$.

\end{enumerate}

Then $\X\cup \Y=X_{F_1}\cup\cdots \cup X_{F_k}\cup Y_{F_1'}\cup\cdots\cup Y_{F_l'}$ is said to be a {\em doppelg\"{a}nger spoke graph} of $V$, and it is said to be {\em perfect} if $Fr(X_{F_j})\cong F_j$ and $Fr(Y_{F_r'})\cong F_r'$ for all $1\leq j\leq k$, $1\leq r\leq l$.

\end{definition}

\begin{obs}

The construction of Definition 3.14 was tailored to the hypotheses of Proposition 2.18.  It implies that if $\X\cup \Y$ is a doppelg\"{a}nger spoke graph associated with $V$, then $U=E(\X\cup \Y,W)$ is a generalized compression body satisfying $\partial_-U=Fr(\Y,W)$.  $\partial_+U$ is then the union of $Fr(\X,W)$ with those components of $\overline{\partial W-N(\X\cup \Y, W)}$ whose boundary components both lie in $Fr(\X,W)$.  Moreover, the minimality of $\mathbf{A}$ stipulated in part (2) of Definition 3.14 allow us to deduce the existence of a primitive disk set $\E^U$ in $U$ which will serve as a substitute for the disk set $\D^V$ of $V$, as the following lemma shows.

\end{obs}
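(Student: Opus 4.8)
The plan is to recognize Observation 3.15 as a direct application of Proposition 2.18, once the objects produced by Definition 3.14 are matched up with its hypotheses, followed by a short computation of $\partial_+U$ from the anatomy of a generalized compression body.

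First I would set up the dictionary. The dual stabilized spoke graphs $X$, $X'$ of part (1) are the dual graphs of Proposition 2.18, and $d(X,X')=\epsilon<\pi/2$; by part (2), $\X$ is a good $\epsilon/8$-small decomposition of $X$ and $C_1\cup\cdots\cup C_k$ is an adjacency chain for it, where $C_j$ is the component of $\partial W-\mathring{N}(X,W)$ containing $h(A_{i_j})$; and by part (5), $\Y$ is obtained from the admissible subgraph $Y'$ of $X'$ by an $\epsilon/8$-small longitudinal detachment followed by an $\epsilon/8$-small decomposition. The one hypothesis of Proposition 2.18 not literally built into Definition 3.14 is that the spoke subgraph of $X'$ is disjoint from the adjacency chain: but admissibility of $Y'$ forbids a prohibited longitudinal edge, in particular any longitudinal edge lying in the same component of $\partial W-X$ as some $h(A_{i_j})$, and since the only part of $Y'$ meeting $\partial W$ is its set of longitudinal edges, $Y'$---and hence $Y$, after a sufficiently small detachment---is disjoint from each $C_j$. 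Proposition 2.18 then applies and gives that $U=E(\X\cup\Y,W)$ is a generalized compression body with $\partial_vU=\A'$ and $\partial_-U=Fr(\Y,W)$, which is the first assertion of the observation.

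Next I would compute $\partial_+U$. Since $\X$ and $\Y$ are disjoint, $\partial U$ is the union of $Fr(\X,W)$, $Fr(\Y,W)$ (these two disjoint), and $\overline{\partial W-N(\X\cup\Y,W)}$, and this last set is a union of annuli in the torus $\partial W$, each lying between two consecutive longitudinal edges of $\X\cup\Y$ and hence with both of its boundary curves on $Fr(\X,W)\cup Fr(\Y,W)$. Deleting $\partial_-U=Fr(\Y,W)$ and $\partial_vU=\A'$---which, in the description just given, is exactly the collection of annuli having one boundary curve on each of $Fr(\X,W)$ and $Fr(\Y,W)$---from $\partial U$ leaves $Fr(\X,W)$ together with those annuli of $\overline{\partial W-N(\X\cup\Y,W)}$ whose two boundary curves both lie on $Fr(\X,W)$, and those whose two boundary curves both lie on $Fr(\Y,W)$. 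To finish I would rule out the second type: after the detachment of part (5), the only longitudinal edges of $Y$, and hence of $\Y$, still lying on $\partial W$ are the core curves of $\B_-^V$; by part (3) of Definition 3.14 each component of $\partial W-X$---equivalently of $\partial W-\X$, since the longitudinal edges of $\X$ and of $X$ coincide---contains at most one component of $\B_-^V$; hence between any two longitudinal edges of $\Y$ on $\partial W$ there lies a longitudinal edge of $\X$, so that no annulus of $\overline{\partial W-N(\X\cup\Y,W)}$ has both of its boundary curves on $Fr(\Y,W)$. This gives the stated description of $\partial_+U$.

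As for the main obstacle: there really is none of substance, which is why the statement is an observation---the argument consists of checking the hypotheses of Proposition 2.18 and unwinding Definition 3.14. The only delicate point is the bookkeeping of smallness constants, as Proposition 2.18 is stated with $d(X,X')>\epsilon$ whereas Definition 3.14 sets $d(X,X')=\epsilon$; this is harmless because every detachment and decomposition involved is $\epsilon/8$-small, so each displaced piece stays within distance $\epsilon/8<d(X,X')$ of its original position and no new intersections are created---precisely the input used in the proof of Proposition 2.18. Finally, the last sentence of the observation, the existence of a primitive disk set $\E^U$ in $U$ substituting for $\D^V$, is not established here but is exactly the content of the lemma that follows, so nothing more is needed at this point.
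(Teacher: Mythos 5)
Your proof is correct and takes the approach the paper intends: the observation is asserted without explicit proof on the grounds that Definition~3.14 was tailored to Proposition~2.18, and you simply carry out the implied verification, matching the objects of parts (1), (2), and (5) of Definition~3.14 to the hypotheses of Proposition~2.18, with the one nontrivial check---that admissibility of $Y'$ forces disjointness from the adjacency chain $C_1\cup\cdots\cup C_k$---correctly identified and justified. Your extra computation showing no component of $\overline{\partial W-N(\X\cup\Y,W)}$ can have both boundary curves on $Fr(\Y,W)$, via part~(3) of Definition~3.14, is a useful explicit supplement the paper leaves implicit, and your remark on the slack between $d(X,X')>\epsilon$ in Proposition~2.18 and $d(X,X')=\epsilon$ in Definition~3.14 correctly diagnoses a harmless mismatch.
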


\begin{figure}
    \centering
    \includegraphics[width=0.8\textwidth]{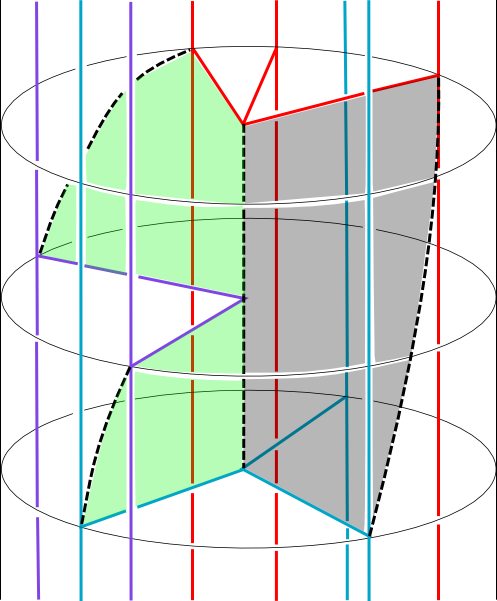}
    \caption{A disk with flaps}
\end{figure}

\begin{lem}

Let $V$ be a component of $\V\cap M_1$, let $\X\cup\Y$ be a doppelg\"{a}nger spoke graph of $V$, and let $U=E(\X\cup\Y,W)$.  Suppose $\A_h^V=A_1\cup\cdots\cup A_m$ is ordered as in Definition 3.8, and that $\A_p^V=A_{p_1}\cup\cdots\cup A_{p_q}$.  Then there is an ordered, disjoint collection of disks $\E^U=E_{p_1}\cup\cdots\cup E_{p_q}$ properly embedded in $U$ with the following properties:

\begin{enumerate}

\item $\E^U\cap \partial_vU=\emptyset$,
\item $E_{p_j}\cap h(A_{p_j})$ is a single spanning arc for all $1\leq j\leq q$,
\item $E_{p_j}\cap h(A_l)=\emptyset$ for all $1\leq j\leq q$, $p_j<l\leq m$.

\end{enumerate}

\end{lem}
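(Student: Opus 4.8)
Our plan is to transplant the portion $\D^V=D_{p_1}\cup\cdots\cup D_{p_q}$ of the primitive disk set from $V$ into $U$, along the doppelg\"{a}nger dictionary between $\partial_+V$ and $\partial_+U$ that is built into Definitions~3.13 and~3.14; the resulting disks $E_{p_j}$ will be the ``disks with flaps'' of Figure~4. Note first that conditions (1)--(3) are assertions purely about $\partial E_{p_j}$, so once we produce a disk $E_{p_j}$ properly embedded in $U$ whose boundary is a faithful copy of $\partial D_{p_j}$ lying in $\partial_+U$, they hold automatically: $\partial E_{p_j}$ is disjoint from $\partial_vU=h(\A_s^V)$ because $\partial D_{p_j}\subset\partial_+V$ is disjoint from $\A_s^V$; it misses $h(A_l)$ for $l>p_j$ because $\partial D_{p_j}$ misses $A_l$; and it meets $h(A_{p_j})$ in a single spanning arc because $\partial D_{p_j}$ does. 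Thus the whole content splits into (a) making sense of the transplanted curve $c_j'\subset\partial_+U$, and (b) showing $c_j'$ bounds a disk in $U$.

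For (a), I would first put each $D_{p_j}$ in good position in $V$: using the incompressibility of the horizontal annuli $\A_h^V$ together with the usual innermost-disk and outermost-arc moves, carried out in a small neighborhood of $\A_h^V$ so as never to create intersections with the higher-indexed annuli, I may assume $\partial D_{p_j}$ meets $\A_h^V$ only in spanning arcs, exactly one of them in $A_{p_j}$, none in $A_l$ for $l>p_j$, and is disjoint from $\A_s^V$. Cutting $D_{p_j}$ along $\A_h^V$ then writes it as a union of subdisks distributed among the components $F$ of $\overline{\partial_+V-\A^V}$, with the arcs of $\partial D_{p_j}$ in each $F$ recorded. By the construction of $\X\cup\Y$, the surface $\partial_+U$ is obtained from $\partial_+V$ by deleting $h(\A_s^V)$ (which becomes $\partial_vU$) and replacing each such $F$ by the spoke surface $Fr(X_F)$, the boundary circles of $Fr(X_F)$ being matched with those of $F$ through the corresponding annuli of $\partial W$. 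I would therefore define $c_j'\subset\partial_+U$ by keeping the spanning arcs of $\partial D_{p_j}$ in the annuli $h(A_l)$, $l\le p_j$, and replacing each arc of $\partial D_{p_j}$ in a piece $F$ by an embedded arc in $Fr(X_F)$ with the matching endpoints, the replacements chosen simultaneously so that the arc systems stay embedded and disjoint and the pieces join into disjoint simple closed curves $c_1',\ldots,c_q'$.

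For (b), I would build $E_{p_j}$ out of subdisks, one per piece $F$ met by $\partial D_{p_j}$. Each $X_F$ is a connected stabilized spoke graph, so its spoke chamber $E(X_F,\cdot)$ is a handlebody cut into balls by the standard disk set $\D_{X_F}$ (Observation~2.5); since $Fr(X_F)$ is standard, the transplanted arcs of $c_j'$ on it may be taken unknotted there, so they cobound inside $E(X_F,\cdot)$ a disjoint union of subdisks together with spanning arcs on the adjacent annuli $h(A_l)$. I would then glue the subdisk in each piece to those in the adjacent pieces across the spanning arcs they share on the $h(A_l)$, $l\le p_j$. Because $D_{p_j}$ itself decomposes into subdisks along exactly the same combinatorial pattern --- this is where the minimality of the connective set $\mathbf{A}$ and the $\phi$- and $\theta$-adjacency data of Definition~3.14 are used, namely to guarantee that the spoke chambers of $\X\cup\Y$ are joined along the $h(A_l)$ in the same tree pattern in which the chambers of $V$ are joined along $\A_h^V$ --- an Euler-characteristic count shows the assembled surface $E_{p_j}$ is again a disk, its flaps being the subdisks straddling the lower-indexed annuli $h(A_l)$, $l<p_j$. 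Disjointness of $E_{p_1},\ldots,E_{p_q}$ follows from that of $D_{p_1},\ldots,D_{p_q}$ and the simultaneous choices above, and the ordering is inherited from the primitive ordering of $\D$.

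The \textbf{main obstacle} is step (b): proving that the transplanted subdisks really do glue to a disk rather than to a higher-genus surface, and that this survives the ``non-perfect'' case, where $Fr(X_F)$ need not be homeomorphic to $F$. In that case one cannot simply carry $D_{p_j}$ across a homeomorphism $V\to U$ (there is none in general); instead one uses only that $Fr(X_F)$ has the same boundary pattern as $F$ together with enough room to route the finitely many transplanted arcs, and one leans on Lemma~2.8 (unknottedness of the trees $X_F\cap B$) to keep each subdisk embedded and to control how it meets the adjacent annuli. Once this combinatorial matching is established, the remainder is the bookkeeping sketched above, and it also yields the primitive disk set for $U$ promised in Observation~3.15.
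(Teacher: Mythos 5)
Your proposal has a genuine gap, and it is the one you flag yourself as the ``main obstacle.'' The transplantation strategy requires the transplanted curve $c_j'\subset\partial_+U$ to bound a disk in $U$, but there is no reason for a curve in $Fr(X_F)$ chosen only to ``have the matching endpoints'' to bound a disk in the spoke chamber: many such curves represent nontrivial elements of $\pi_1$ of the handlebody $E(X_F,W)$. An Euler-characteristic count cannot close this gap, since it only controls $\chi$ of the assembled surface, not its connectivity or number of boundary components. Lemma~2.8 controls the unknottedness of the trees $X'\cap B$, not which proper arcs in $Fr(X_F)$ cobound disks with arcs on the adjacent annuli, so it does not supply what you need. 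You have correctly identified that this is where your argument must do real work, but the proposal does not actually do it.

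The paper avoids this entirely by \emph{not} transplanting $D_{p_j}$. The disks $E_{p_j}$ are built directly from the spoke-graph geometry: when $\partial C_{p_j}$ lies on a single $Fr(X_r)$, $E_{p_j}$ is a meridian disk $(\{\phi_0\}\times D^2)\cap E(\X,W)$ of the relevant spoke chamber; when $\partial C_{p_j}$ straddles two components $Fr(X_r)$, $Fr(X_s)$, $E_{p_j}$ is assembled from the quadrilateral spanning disks (Definition~2.16) along the adjacency chain --- these are manifestly disks, and their disjointness from $\Y$ uses precisely the ``prohibited longitude'' condition of Definition~3.14(4). As a consequence, $\partial E_{p_j}$ is \emph{not} a faithful copy of $\partial D_{p_j}$: it meets the annuli $h(A_l)$ in a pattern dictated by the spoke graph, not by $D_{p_j}$. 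This means your claim that conditions (1)--(3) ``hold automatically'' is based on a false premise for the construction that actually works. In particular, condition (3) is not automatic; the paper proves it by invoking minimality condition (2) of Definition~3.8 --- if $E_{p_j}$ had a flap crossing some $h(A_l)\in h(\mathbf{A})$ with $l>p_j$, one could swap $A_l$ for $A_{p_j}$ in $\mathbf{A}$ and contradict minimality. Your argument never uses this minimality property, which is a strong signal that something essential is missing.
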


\begin{proof}

For each component $A_l$ of $\A_h^V$, let $C_l$ denote the closure of the component of $\partial W-\mathring{N}(\X\cup \Y)$ which contains $h(A_l)$.  Let $\mathbf{A}=\{A_{i_1},\ldots , A_{i_k}\}$ be the minimal connective set for $\A_h^V$, and order the components of $\X=X_1\cup\cdots \cup X_{k+1}$ so that the components of $\partial C_{i_j}$ lie in $Fr(X_j)$ and $Fr(X_{j+1})$ for all $1\leq j\leq k$.  In the next two paragraphs we describe the components $E_{p_j}$ of $\E^V$.

Suppose first that $\partial C_{p_j}$ lies on a single component $Fr(X_r)$ of $Fr(\X)$, where $X_r$ has central vertex $v=(\phi_0,0,0)$.  In the notation of part (4) of Definition 3.13, $Y'$ must be disjoint from $C_{p_j}$, since the longitude of $X'$ that lies in $C_{p_j}$ is prohibited ($C_{p_j}$ contains a component of $A_p^V$ by definition). Hence $\Y$ is disjoint from the component of $(\{\phi_0\}\times D^2)\cap E(\X,W)$ that meets $C_{p_j}$.  We will let $E_{p_j}$ denote this component, which is a disk properly embedded $E(\X\cup\Y,W)$.  Call this kind of disk a {\em simple disk}.

If the components of $\partial C_{p_j}$ lie on distinct components $Fr(X_r)$ and $Fr(X_s)$ of $Fr(\X)$, $r<s$, then the disk we need to construct is a bit more complex.  First let $G$ be the union of the spanning disks (recall Definition 2.16) for the annuli $C_{p_r}, C_{p_{r+1}},\ldots C_{p_{s-1}}$, and let $D$ be the spanning disk of $C_{p_j}$.  Then let $E_{p_j}$ be the disk $(D\cup G)\cap E(\X,W)$, which again is disjoint from $\Y$ by part (4) of Definition 3.14, and is thus properly embedded in $E(\X\cup\Y,W)$.  Call this kind of disk a {\em disk with flaps}, see Figure 4.

A simple disk is disjoint from all the other disks defined above, and any disks with flaps which overlap one another can also be isotoped slightly to be disjoint from one another.  The resulting collection of disks is our collection $\E^U$.  Obviously $\E^U$ satisfies conclusions (1) and (2).  Conclusion (3) is also clear in the case that $E_{p_j}$ is simple, since in fact the only annulus of the form $h(A_l)$ which it meets is $h(A_{p_j})$.  

In the case that $E_{p_j}$ has flaps, condition (2) on minimality for $\mathbf{A}$ given in Definition 3.8 ensures that the index of $E_{p_j}$ is higher than that of every element element $A_l\in\mathbf{A}$ such that $h(A_l)\cap E_{p_j}\neq \emptyset$.  Otherwise we could replace $A_l$ with $A_{p_j}$ in $\mathbf{A}$ to obtain another connective set which violates the minimality of $\mathbf{A}$.  Since $E_{p_j}$ meets no other annuli of the form $h(A_l)$ except for $h(A_{p_j})$, the final condition is satisfied.

\end{proof}

\begin{prop}

For every component $V$ of $\V\cap M_1$, there is a perfect doppelg\"{a}nger spoke graph embedded in $W$.

\end{prop}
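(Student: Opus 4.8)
The plan is to construct the pieces $X_{F_j}$ and $Y_{F'_r}$ one at a time and then fit them together as prescribed in Definition 3.14, while keeping track of the single quantity that can obstruct ``perfection,'' namely the genus (equivalently, the Euler characteristic) of each $Fr(X_{F_j})$ relative to the target surface $F_j$. First I would recall from Definition 3.12 that a doppelg\"{a}nger spoke graph for a component $F$ of $\overline{\partial V-\A^V}$ is required only to have its non-detached longitudinal edges equal to the core curves of $\B_F$; there is still freedom in the number of outer (radial) vertices, in the choice of stabilizing disk set $\D$, and in how many longitudes are detached. The point of the construction is that each of these degrees of freedom changes $Fr(X)$ in a controlled way: by Observation 2.5 and the surrounding discussion, adding an outer vertex together with its radial edge raises $\chi(Fr(X))$ by the appropriate amount, attaching a stabilizing disk lowers it (a stabilization adds a handle to $Fr(X)$), and a longitudinal detachment removes a boundary-parallel annulus. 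So ``perfect'' is achievable provided we can hit the right Euler characteristic and the right number of boundary curves, and the boundary curves are already forced on us by $\B_F$ via Lemma 3.11 (both ends of each $B\in\B^V$ land on the same component of $\overline{\partial V-\A^V}$, so $\B^V$ partitions correctly among the $F$'s).

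The main steps, in order. (i) Fix the ordering of $\A_h^V$ as in Definition 3.8, extract the minimal connective set $\mathbf{A}=\{A_{i_1},\dots,A_{i_k}\}$, and thereby label the components $F_1,\dots,F_{k+1}$ of $\overline{\partial_+V-\A^V}$ so that $\partial A_{i_j}$ straddles $F_j$ and $F_{j+1}$; minimality (Definition 3.8(1)) guarantees the $F_j$ are pairwise distinct and connectivity of $\mathbf{A}$ guarantees these exhaust $\overline{\partial_+V-\A^V}$, exactly as in part (2) of Definition 3.14. (ii) For each $F_j$ build a connected stabilized spoke graph $X_{F_j}$ whose non-detached longitudes are the cores of $\B_{F_j}$, choosing the number of outer vertices and the stabilizing disk set so that $\chi(Fr(X_{F_j}))=\chi(F_j)$ and so that $Fr(X_{F_j})$ has the same number of boundary components as $F_j$; since an orientable surface is determined up to homeomorphism by its genus and its number of boundary components, this yields $Fr(X_{F_j})\cong F_j$. (iii) Assemble $X=\bigcup_j X_{F_j}$ as an $\epsilon/8$-small good decomposition: rotate the central vertices to within $\epsilon/8$ of a common point in the $\phi$-direction (Definition 3.13, Remark 3.14), and arrange the $\theta$-coordinates of the outer vertices so that, for each $j$, the annulus $C_j\subset\partial W-\mathring N(X)$ containing $h(A_{i_j})$ has one boundary on $Fr(X_j)$ and the other on $Fr(X_{j+1})$, i.e. $C_1\cup\cdots\cup C_k$ is an adjacency chain; this is possible because the $F_j$ are distinct and the $A_{i_j}$'s pattern of incidences is a path. (iv) Take $X'$ a dual of $X$ with $d(X,X')=\epsilon<\pi/2$ (Definition 2.6, Remark 2.7); identify the prohibited longitudes of $X'$ as in part (4), pick an admissible $Y'\subset X'$ containing exactly the cores of $\B_-^V$ among its non-prohibited longitudes, detach the unwanted longitudes $\epsilon/8$-small to get $Y$, and choose, just as in (ii), the number of outer vertices and stabilizing disks on each doppelg\"{a}nger piece $Y_{F'_r}$ so that $Fr(Y_{F'_r})\cong F'_r$; finally take $\Y=Y_{F'_1}\cup\cdots\cup Y_{F'_l}$ an $\epsilon/8$-small decomposition of $Y$. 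By Observation 3.15 (which is exactly Proposition 2.18 applied to this data) $E(\X\cup\Y,W)$ is then a generalized compression body of the required form, and by construction each frontier piece is homeomorphic to its target, so $\X\cup\Y$ is a perfect doppelg\"{a}nger spoke graph of $V$.

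The step I expect to be the real obstacle is (iii), the simultaneous realization of the combinatorics: we must embed all the $X_{F_j}$ and the dual $Y$-pieces disjointly in $W$ so that (a) the adjacency chain $C_1\cup\cdots\cup C_k$ exists for the $\phi$-$\theta$ arrangement dictated by $\mathbf{A}$, (b) each $\B_-^V$ core really does appear as a longitude of $X'$ in a non-prohibited position — which is what forces us, in (iii), to place the outer vertices of $X$ so that the would-be longitudes of $X'$ avoid the annuli carrying $h(\A_p^V)$ and $h(\mathbf{A})$ — and (c) all the $\epsilon/8$-small detachments and stabilizations of $X$ and of $Y$ can be performed concurrently without new intersections (this is the ``chosen small enough'' remark at the start of the proof of Proposition 2.18). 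Once these placements are made, everything else is bookkeeping: the homeomorphism type of each $Fr(X_{F_j})$ is adjusted independently via the Euler-characteristic/boundary-count argument of step (ii), Lemma 3.16 supplies the primitive disk set $\E^U$ needed to make $U$ a genuine stand-in for $V$, and no further compatibility needs to be checked. I would therefore spend the bulk of the written proof on a careful description of the embedding in step (iii) and only briefly on steps (ii) and (iv).
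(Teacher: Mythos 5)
Your proposal correctly assembles the combinatorial scaffolding of Definition 3.14 (steps (i)--(iii)), and your step (ii) correctly captures why each $Fr(X_{F_j})$ can be made homeomorphic to $F_j$: the boundary-curve count is forced by $\B_{F_j}$, and the genus can be dialed up freely by adding stabilizing arcs to $X$. However, the core of the paper's proof—and the point at which your argument has a genuine gap—is step (iv). You write that one should ``choose, just as in (ii), the number of outer vertices and stabilizing disks on each doppelg\"anger piece $Y_{F'_r}$ so that $Fr(Y_{F'_r})\cong F'_r$,'' treating this as an unconstrained choice. It is not. The graph $Y'$ is an admissible \emph{subgraph} of the dual $X'$, and by Definition~2.6 the stabilizing arcs of $X'$ are in bijection with those of $X$; moreover the detachable longitudes of $Y'$ are limited by the prohibitions of Definition~3.14(4). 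So the total genus that can be distributed among the $Fr(Y_{F'_r})$ is capped at $s+a$, where $s$ is the number of stabilizing arcs already committed to $X$ (a number you fixed in step (ii) to realize the genera of the $F_j$) and $a=|\A_h^V|-|\A_p^V|-|\mathbf{A}|$ is the number of non-prohibited longitudes available to detach. You cannot increase $s$ at will without destroying the homeomorphisms $Fr(X_{F_j})\cong F_j$ you arranged in step (ii).

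The paper's proof therefore must (and does) prove the inequality $\sum_r g(F'_r)\le s+a$. This is where all the real work lies: it uses the fact that $V'=E(\D^V,V)$ is a generalized compression body with connected $\partial_+V'$ (an application of the argument in Lemma~3.7) to derive $\chi(\partial_+V)-\chi(\partial_-V)\le -2(|\A_p^V|+|\partial_-V|-1)$, then translates $\chi(\partial_+V)$ into data about $\X$ ($\chi(\partial_+V)=\chi(Fr(\X,W))=-2|\A_h^V|-|\A_s^V|-2s+2|\X|$) and combines with $|\mathbf{A}|=|\X|-1$ to land exactly on $\sum g(F'_r)\le s+a$. None of this appears in your proposal. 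You also misplace the difficulty: you flag step (iii), the simultaneous embedding and adjacency-chain arrangement, as the ``real obstacle,'' but that is handled by choosing all detachments and decompositions $\epsilon/8$-small (and is already absorbed into Proposition~2.18). The obstacle is the Euler-characteristic budget for $\Y$, and without the inequality above the proof does not close.
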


\begin{proof}

For any doppelg\"{a}nger spoke graph $X_F$ of a component $F$ of $\overline{\partial V-\A^V}$, the surface $Fr(X_F,W)$ will have the same number of boundary components as $F$.  Moreover, the genus of $Fr(X_F,W)$ is the same as the total number of stabilizing arcs and detached longitudes that occur in $X_F$.  In particular, if $X_F$ is unstabilized and has no detached longitudes, then $Fr(X_F,W)$ will be planar.  Thus a doppelg\"{a}nger spoke graph $X_F$ can always be found which satisfies $Fr(X_F,W)\cong F$ after attaching a sufficient number of stabilizing arcs and/or detached longitudes.  

In part (1) of Definition 3.14, we have the flexibility to stabilize $X$ as often as we need, with stabilizing arcs based on radial edges of our choosing.  This allows us to choose the number of stabilizing arcs that will eventually occur in the components of the spoke graph $\X$ defined in part (2) of Definition 3.14.  It follows from this and the previous paragraph that we may choose $\X$ so that each of its components $X_{F_j}$ satisfies $Fr(X_{F_j},W)\cong F_j$.  

As noted in Remark 2.7, the Morse condition of Definition 2.6 grants us enough flexibility to choose the radial edges on which the stabilizing arcs of $X'$ (the dual of $X$) will be based, and this in turn allows us to control the component of $\Y$ on which they will eventually occur in part (5) of Definition 3.14. Likewise, we can choose the components of $\Y$ on which the detached longitudes of $Y$ shall occur after the decomposition described in part (5) of Definition 3.14.  

So, similar to the case with $\X$, we may distribute detached longitudes and stabilizing arcs among the components of $\Y$ however we please.  But there is an important difference: The total number of stabilizing arcs and detached longitudes that can occur in $\Y$ is bounded above by $s+a$, where $s$ denotes the number of stabilizing arcs that occur in $X$, and $a$ denotes the maximal number of detached longitudes that can occur on an admissible subgraph of $X'$ (as defined in part (4) of Definition 3.14).  Therefore, to complete the proof we must show that a total of $s+a$ stabilizing arcs and detached longitudes is always sufficient to create a spoke graph $\Y$ which satisfies the equation $Fr(Y_{F_j'},W)\cong F_j'$ for each of its components $Y_{F_j'}$.

As in part (5) of Definition 3.14, let $\F'=F_1'\cup\cdots \cup F_l'$ be the union of those components of $\partial_-V$ which meet $T_1$.  By the first paragraph of this proof, the total number of stabilizing arcs and detached longitudes necessary to ensure that $Fr(Y_{F_j'}, W)\cong F_j'$ for all $1\leq j\leq l$ is equal to $\sum g(F_j')$, where $g(F_j')$ is the genus of $F_j'$.  Since $g(F)=1-\frac{\chi (F)+|\partial F|}{2}$ for any connected, compact surface $F$, we obtain 

\[
\displaystyle\sum g(F_j')=|\F'|-\frac{\chi (\F')+|\A_s^V|}{2}.\tag{1}
\]

The fact that this quantity is less than $s+a$ is ultimately derived from the inequality 

\[
\chi(\partial_+V)-\chi(\partial_-V)\leq -2(|\A_p^V|+|\partial_-V|-1).\tag{2}
\]

The truth of (2) can be seen as follows: $V'=E(\D^V,V)$ is a generalized compression body with the same negative boundary as $V$.  Furthermore, $\partial_+V'$ is connected since $\partial_+V-\partial \D^V$ is connected, as can be seen using essentially the same proof as that of Lemma 3.7.  Now $\partial_-V'$ is obtained from $\partial_+V'$ via surgeries along disks, and there must be at least $|\partial_-V'|-1$ such surgeries since $\partial_+V'$ is connected.  Thus $\chi(\partial_+V')-\chi(\partial_-V')\leq -2(|\partial_-V'|-1)$.  Inequality (2) now follows from the fact that $\partial_-V'=\partial_-V$, and the fact that $\chi(\partial_+V')=\chi(\partial_+V)+2|\D^V|=\chi(\partial_+V)+2|\A_p^V|$.

Since no component of $\partial_-V$ is a disk or sphere, and $\F'\subset \partial_-V$, $\chi (\partial_-V)\leq \chi(\F')$.  Thus from (2) we easily obtain the analogue $\chi(\partial_+V)-\chi(\F')\leq -2(|\A_p^V|+|\F'|-1)$.  In conjunction with equation (1), we obtain

\[
\displaystyle\sum g(F_j')\leq 1-|\A_p^V|-\frac{\chi (\partial_+V)+|\A_s^V|}{2}\tag{3}
\]

Our choice of $\X$ has ensured that $\chi (\partial_+V)=\chi (Fr(\X,W))$.  We then compute $\chi (Fr(\X,W))=-|\partial Fr(\X,W)|-2s+2|Fr(\X,W)|=-2|\A_h^V|-|\A_s^V|-2s+2|\X|$, and so deduce

\[
\displaystyle\sum g(F_j')\leq 1-|\A_p^V|+|\A_h^V|+s+|\X|\tag{4}
\]

The detached longitudes of $Y$, as described in part (5) of Definition 3.14, all come from core curves of the annuli of $\partial W-X$ which contain a component of $h(\A_h^V)$.  On the other hand, the number of prohibited annuli (part (4) of Definition 3.14) is equal to $|\A_p^V|+|\mathbf{A}|$ (where $\mathbf{A}$ denotes the set of annuli defined in part (2) of Definition 3.14).  Hence there will be at most $a=|\A_h^V|-|A_p^V|-|\mathbf{A}|$ detached longitudes which may occur in $\Y$.  However, $|\mathbf{A}|=|\X|-1$ by the minimality of $|\mathbf{A}|$.  Plugging this into inequality (4) yields 

\[
\displaystyle\sum g(F_j')\leq s+a.\tag{5}
\]

The lemma now follows.

\end{proof}

\begin{definition}
Let $V$ be a component of $\V\cap M_1$, let $\G^V$ denote the union of those components $G$ of $\G\cap M_1$ such that $N(G,M_1)\cap V\neq \emptyset$.  Let $\X\cup \Y$ be a perfect doppelg\"{a}nger spoke graph for $V$, and let $\Q^V$ be the result of isotoping $Fr(\X\cup\Y,W)$ so that $\partial Fr(\X\cup\Y,W)=h(\partial \G^V)$, via an isotopy supported in $N(\B^V,W)$ (here $\B^V=\overline{\partial V-h(\A^V)}$ is the union of annuli described in Definition 3.11).  Then $\Q^V$ is called the {\em doppelg\"{a}nger surface} of $V$, and the closure of the component of $W-\Q^V$ which does {\em not} contain $\X\cup\Y$ is the {\em doppelg\"{a}nger chamber} of $V$.
\end{definition}

\begin{rmk}

There is little difference between the doppelg\"{a}nger chamber $U$ of $V$ and $E(\X\cup\Y,W)$, outside of the fact that the boundary components $\partial_+U\cup\partial_-U=\Q^V$ line up well with respect to our gluing map.  In particular, Observation 3.15 and Lemma 3.16 apply in the case that $U$ is a doppelg\"{a}nger chamber.

\end{rmk}

\begin{figure}
    \centering
    \includegraphics[width=0.95\textwidth]{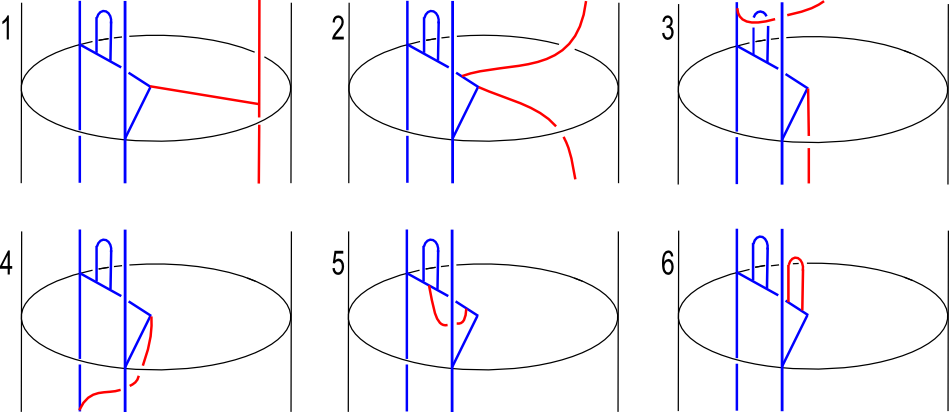}
    \caption{Edge slides that turn a detached longitude into a stabilized arc}
\end{figure}

\begin{lem}

Let $X$ be a connected, unstabilized spoke graph and let $X'$ be its dual.  Suppose $Y$ is obtained from $X$ by attaching a total of $n$ stabilizing arcs and detached longitudes, and that $Y'$ is obtained from $X'$ by attaching a total of $n$ stabilizing arcs and detached longitudes, in any fashion.  Then $Fr(Y,W)$ is properly isotopic to $Fr(Y',W)$ in $W$.

\end{lem}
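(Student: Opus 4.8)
The plan is to normalize $Y$ and $Y'$ by edge slides until they become a dual pair of stabilized spoke graphs, and then invoke Lemma 2.9. The first move is to dispose of detached longitudes. Using the edge slides of Figure 5, each detached longitude occurring in $Y$ can be converted into a stabilizing arc based on some radial edge of $X$; each such conversion is a handle slide of the spoke graph realized by an ambient isotopy of $W$, and since an ambient isotopy of $W$ preserves $\partial W$ setwise it restricts to a proper isotopy of the frontier surface. Carrying this out for all detached longitudes of $Y$ and of $Y'$, we may assume that $Y$ is obtained from $X$ by attaching $n$ stabilizing arcs and $Y'$ is obtained from $X'$ by attaching $n$ stabilizing arcs. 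In the same way, a stabilizing disk can be slid from one radial edge of $X$ to another, or repositioned along a radial edge: this is a $1$-handle slide of the handlebody $N(Y,W)$, again realized by an ambient isotopy of $W$, so it changes $Fr(Y,W)$ only by a proper isotopy. Thus the proper isotopy class of $Fr(Y,W)$ depends only on $X$ and $n$, and that of $Fr(Y',W)$ only on $X'$ and $n$.

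Next I would pick convenient models. Let $Z$ be obtained from $X$, and $Z'$ from $X'$, by attaching $n$ stabilizing arcs each, with the stabilizing disks positioned so that $Z$ and $Z'$ form a dual pair in the sense of Definition 2.6. This is possible: $X$ and $X'$ are already a dual pair of unstabilized spoke graphs, so once the new stabilizing disks are in place it only remains to keep the stabilizing disks of $Z$ off the plane through the central vertex of $Z'$ and vice versa (harmless, since a stabilizing disk may be kept in a small $\phi$-neighborhood of its radial edge), to make the $n$ stabilizing arcs of $Z'$ clasp those of $Z$ one at a time, and to impose the Morse condition on all the stabilizing arcs; Remark 2.7 provides exactly the freedom needed for the last two requirements. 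By the previous paragraph, $Fr(Y,W)$ is properly isotopic to $Fr(Z,W)$ and $Fr(Y',W)$ is properly isotopic to $Fr(Z',W)$.

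Finally, applying Lemma 2.9 to the dual pair $Z,Z'$ yields a homeomorphism $E(Z\cup Z',W)\cong Fr(Z)\times I$ carrying $Fr(Z')$ onto $Fr(Z)\times\{0\}$ and $Fr(Z)$ onto $Fr(Z)\times\{1\}$; under it the vertical boundary $(\partial Fr(Z))\times I$ of the product corresponds to $E(Z\cup Z',W)\cap\partial W\subset\partial W$, so pushing $Fr(Z)\times\{1\}$ across the product to $Fr(Z)\times\{0\}$ is a proper isotopy of $W$ taking $Fr(Z,W)$ to $Fr(Z',W)$. Chaining the three proper isotopies, $Fr(Y,W)$ is properly isotopic to $Fr(Z,W)$, which is properly isotopic to $Fr(Z',W)$, which is properly isotopic to $Fr(Y',W)$, proving the lemma.

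I expect the main obstacle to be the very first step, the Figure 5 move. The delicate point is that the detached loop is longitudinal in $W$ and hence not null-homotopic, so the edge slides do not contract it; instead they push the radial edge attached to it across the remaining edges and longitudes of $X$ into a position from which it bounds an obvious disk, and one must check — using the Morse-type flexibility emphasized in Remark 2.7 — that this can be arranged while staying clear of the other stabilizing disks and detached longitudes that are still present.
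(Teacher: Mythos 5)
Your proof is correct and takes essentially the same route as the paper: normalize by the Figure~5 edge slides to eliminate detached longitudes, then sweep across the product region $E(\cdot\cup\cdot,W)\cong Fr(\cdot)\times I$ furnished by Lemma~2.9 for a dual pair, then slide back. The paper phrases the chain as $Fr(Y)\to Fr(\tilde Y)\to Fr(\tilde Y')\to Fr(Y')$ (normalize $Y$, sweep to the dual $\tilde Y'$, slide that to $Y'$) while you normalize both ends to a chosen dual pair $Z,Z'$, but this is the same argument modulo bookkeeping.
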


\begin{proof}

If $Y$ has detached longitudes, then there is a sequence of edge slides which change $Y$ into a connected stabilized spoke graph $\tilde{Y}$ without detached longitudes (see Figure 5), and these correspond to an isotopy of $Fr(Y,W)$ to $Fr(\tilde{Y},W)$, one which we can choose to be supported outside of a small open collar of $\partial W$. 

We may then isotope $Fr(\tilde{Y},W)$ onto $Fr(\tilde{Y}',W)$ via a ``sweepout'' isotopy, where $\tilde{Y}'$ is the dual of $\tilde{Y}$.  We will call this the {\em sweepout stage} of the isotopy between the surfaces.  Note that it is the only stage of the isotopy during which the boundary of our surface is not fixed.

Finally, since $\tilde{Y}'$ is obtained from $X'$ by attaching $n$ stabilizing arcs, using (a reversed version of) the same kind of isotopy described in the first paragraph, we may slide the stabilizing arcs of $\tilde{Y}'$ along $X'$ so that they coincide with the stabilizing arcs and detached longitudes of $Y'$.  This corresponds to an isotopy of $Fr(\tilde{Y}',W)$ onto $Fr(Y',W)$ which fixes $\partial Fr(\tilde{Y}',W)$.  Composing these istopies yields the result.

\end{proof}

\begin{definition}
Let $V$ and $V'$ be a pair of components of $\V\cap M_1$, and let $\G^V$, $\G^{V'}$ be defined as in Definition 3.18.  Then $V$ and $V'$ are said to be {\em adjacent} if $\G^V\cap\G^{V'}\neq \emptyset$.  
\end{definition}

\begin{lem}

Suppose $V$ and $V'$ are an adjacent pair of components of $\V\cap M_1$.  Then $\G^V\cap\G^{V'}$ consists of a single component, call it $G$.  Moreover, if $Q^V$ and $Q^{V'}$ are the component of $\Q^V$ and $\Q^{V'}$, respectively, which satisfy $h(\partial G)=\partial Q^V=\partial Q^{V'}$, then $Q^V$ is isotopic to $Q^{V'}$ in $W$ via an isotopy which fixes $\partial Q^V$.

\end{lem}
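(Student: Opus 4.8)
The plan is to establish the two assertions in turn, the first by a short separation argument and the second by appealing to Lemma 3.20.

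\emph{First assertion.} Let $G$ be a component of $\G^V\cap\G^{V'}$. Since $G$ is connected and two-sided in $M_1$, each of its two sides is bordered by exactly one component of $\V\cap M_1$, and since $G$ separates $M_1$ (condition (2) of Definition 3.3) these two components are distinct, one lying in each component $N_+,N_-$ of $M_1-G$. As $G\in\G^V$ and $G\in\G^{V'}$, we may relabel so that $V\subset N_+$ and $V'\subset N_-$. Suppose now that $\G^V\cap\G^{V'}$ also contained a second component $G'$, with $M_1-G'=N_+'\sqcup N_-'$; relabelling again, $V\subset N_+'$ and $V'\subset N_-'$. Choose an arc $a$ in the interior of $V$ running from near $G$ to near $G'$, and an arc $a'$ in the interior of $V'$ running from near $G'$ to near $G$, and join them into a loop $\gamma\subset M_1$ using short arcs that cross $G'$ once and $G$ once, with the crossing near $G'$ chosen disjoint from $G$. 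Then $\gamma$ meets $G$ transversely in a single point, contradicting the fact that $G$ separates $M_1$. Hence $\G^V\cap\G^{V'}$ is the single component $G$.

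\emph{Second assertion.} Let $F^V_G$ and $F^{V'}_G$ denote the faces of $V$ and $V'$ lying on $G$. Each is homeomorphic to $G$, and since $G$ lies in a thick surface or in a thin surface, which in either case is the $\partial_+$-side (respectively the $\partial_-$-side) of both compression bodies it bounds, $F^V_G$ and $F^{V'}_G$ are simultaneously $\partial_+$-faces or simultaneously $\partial_-$-faces. By Definition 3.18, $Q^V$ is, after the boundary-normalising isotopy, the frontier $Fr(X^V_G,W)$ of the doppelg\"{a}nger spoke graph that Definition 3.14 assigns to $F^V_G$, whose non-detached longitudinal edges are the core curves of $\B^V_G$; likewise $Q^{V'}=Fr(X^{V'}_G,W)$ with non-detached longitudes the core curves of $\B^{V'}_G$. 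Perfectness gives $Fr(X^V_G,W)\cong G\cong Fr(X^{V'}_G,W)$, so each of $X^V_G,X^{V'}_G$ carries a total of $g(G)$ stabilizing arcs and detached longitudes. The construction of Definition 3.14 is arranged precisely so that $\B^V_G$ and $\B^{V'}_G$ are complementary families of annuli on $\partial W$ relative to $h(\partial G)$: the curves $h(\partial G)$ cut $\partial W$ into annuli, those on the $V'$-side of $h(\partial G)$ forming $\B^V_G$ and those on the $V$-side forming $\B^{V'}_G$, which is exactly the relationship between the longitudinal edges of a connected spoke graph and those of its dual (Definition 2.6). Consequently $X^{V'}_G$ may be taken to be built on the dual of the unstabilised core of $X^V_G$, with the same number $g(G)$ of stabilizing arcs and detached longitudes attached, and Lemma 3.20 produces a proper isotopy of $Fr(X^V_G,W)$ onto $Fr(X^{V'}_G,W)$ in $W$. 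The boundary moves only during the sweep-out stage of that isotopy, and it returns to $h(\partial G)$; cancelling this motion by an ambient isotopy of $W$ supported in a collar of $\partial W$ yields an isotopy of $Q^V$ onto $Q^{V'}$ fixing $\partial Q^V=h(\partial G)$, as required.

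The step I expect to require the most care is the assertion in the second paragraph that $\B^V_G$ and $\B^{V'}_G$ are genuinely complementary annulus families on $\partial W$ with respect to $h(\partial G)$ — equivalently, that the perfect doppelg\"{a}nger spoke graph built for $G$ from the side of $V'$ may be taken to be the dual of the one built from the side of $V$. Verifying this means tracing through Definitions 3.11--3.14 together with the way the $T$-annuli $\A^{V''}$ of the components $V''$ of $\V\cap M_1$ tile the torus $T_1$ identified with $\partial W$ by $h$. Once it is in place, the first assertion and the invocation of Lemma 3.20 are routine.
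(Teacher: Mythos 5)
Your proof follows the same structure as the paper's: the first assertion is established by the separation argument (building a loop that would meet $G$ transversally once), and the second is obtained by applying Lemma 3.20 to $X_F$ and $X_{F'}$, then fixing up the boundary motion of the sweepout stage. The first part is essentially identical to the paper's. For the second part, the paper simply writes "Setting $X_F=Y$ and $X_{F'}=Y'$ in Lemma 3.20\dots" without verifying that the unstabilized cores of $X_F$ and $X_{F'}$ can be taken to be a dual pair; you correctly identify this as the point requiring care and supply the justification. Your claim that $\B_F$ and $\B_{F'}$ are complementary is right, and the cleanest way to see it is the one you gesture at: each annulus of $\B_F$, being disjoint from $h(\A^V)$ with boundary in $h(\partial F)$, must (after discarding the thin collar $h(\pi(T\cap N(G)))$) be one of the components of $T_1\setminus(T\cap G)$ lying in the side of $M_1\setminus G$ that contains $V'$, and symmetrically for $\B_{F'}$; since $V$ lies entirely on one side of the separating surface $G$ (Definition 3.2(2)), these two families are exactly the alternating components and their core curves interleave as required for Definition 2.6. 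Two minor points of phrasing: you should say "the $V'$-side of $G$" rather than "the $V'$-side of $h(\partial G)$" (the curves $h(\partial G)$ do not themselves separate $\partial W$ into sides attributable to $V$ or $V'$; it is the separation of $M_1$ by $G$ that does the work), and the final "cancel the boundary motion by an ambient isotopy" step deserves a sentence noting that the sweepout stage moves each boundary curve of $Q^V$ within its annular neighborhood in $\partial W$ and returns it to $h(\partial G)$, so the cancellation is supported in a collar and does not disturb the rest of the isotopy — which is what the paper means by its (unfortunately dangling) figure reference.
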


\begin{proof}
If $\G^V\cap G^{V'}$ had at least two components $G_1$ and $G_2$, then we could embed a simple closed curve $\omega\subset M_1$ which meets each of $V$, $V'$, $N(G_1,M_1)$ and $N(G_2,M_1)$ in a single arc.  Since $\omega$ meets $G_1$ in a single point, this contradicts our assumption that each component of $\G\cap M_1$ is separating in $M_1$.  Thus there is exactly one component, $G$, of $\G^V\cap G^{V'}$.  

Let $F=N(G,W)\cap V$ and $F'=N(G,W)\cap V'$.  Then $F$ is a component of $\overline{\partial V-\A^V}$, $F'$ is a component of $\overline{\partial V'-\A^{V'}}$, and we let $X_F$, $X_{F'}$ denote the corresponding perfect doppelg\"{a}nger spoke graphs which give rise to $Q^V$ and $Q^{V'}$.  Setting $X_F=Y$ and $X_{F'}=Y'$ in Lemma 3.20, we see that there is an isotopy of $Fr(X_F,W)$ onto $Fr(X_{F'},W)$.  Outside of a small open collar $C$ of $\partial W$, the isotopy we require from $Q^V$ to $Q^{V'}$ is identical to the isotopy described in the proof of Lemma 3.20.  The only difference is that, during the sweepout stage, $Q^V\cap C$ will be taken onto $Q^{V'}\cap C$ via an isotopy which leaves $\partial Q^V$ fixed, as shown in Fig. ??
\end{proof}

\begin{thm}

If the generalized Heegaard surface $\G\subset M$ amalgamates to a minimal genus Heegaard surface of $M$, then $g(M')\leq g(M)$ (here $M'$ is the manifold obtained by gluing $W$ to $M_2$ as described in Convention 3.9).

\end{thm}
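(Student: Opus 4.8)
The plan is to produce a generalized Heegaard surface $\G'$ of $M'$, assembled from the doppelg\"{a}nger surfaces of the components of $\V\cap M_1$ together with the part of $\G$ lying in $M_2$, and then to bound the genus of its amalgamation by $g(M)$; since an amalgamation of a generalized Heegaard surface is an honest Heegaard surface, this yields $g(M')\le g(M)$. The first step is to assemble the doppelg\"{a}nger surface itself. For each component $V$ of $\V\cap M_1$ which meets $T_1$, Proposition 3.17 produces a perfect doppelg\"{a}nger spoke graph, and hence a doppelg\"{a}nger surface $\Q^V\subset W$ whose doppelg\"{a}nger chamber $U^V$ is a generalized compression body with $\partial_-U^V=Fr(\Y,W)$ (Observation 3.15, Remark 3.19). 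Whenever $V$ and $V'$ are adjacent, Lemma 3.22 shows that the components of $\Q^V$ and $\Q^{V'}$ corresponding to the shared thin surface $\G^V\cap\G^{V'}$ coincide on their common boundary and are isotopic rel boundary; using these isotopies I would nest the chambers $U^V$ one inside another, in an order refining the partial order coming from the thin surfaces of $\G$ in $M_1$, so that the pieces $\Q^V$ fit together into a single properly embedded surface $\Q\subset W$ with $\partial\Q=h(\partial(\G\cap M_1))$ and $\Q\cap\partial W=h(\G\cap T_1)$; the complement $W-\Q$ is then the union of the chambers $U^V$ with a collar of $\partial W$.

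Next I would check that $\G'=\Q\cup_{h\circ\pi}(\G\cap M_2)$ is a generalized Heegaard surface of $M'=W\cup M_2$. Since $\pi$ carries $\G\cap T_2$ onto $\G\cap T_1$, the two pieces glue up to a properly embedded surface. Its complementary regions are of two kinds. Those contained in $M_2$ are exactly the compression bodies into which $\G$ already cuts $M_2$, and so are unchanged. Those meeting $W$ arise by gluing each chamber $U^V$ to the corresponding generalized compression bodies of $\V\cap M_2$ along the annuli $\A\subset\partial W=T$, exactly as $\Hf$ is reconstructed from $\V$. Here Lemma 3.16 is the crucial input: the substitute primitive disk set $\E^U$ plays for $U^V$ the role that $\D^V$ plays for $V$, so that after the gluings Observation 1.2 and Propositions 1.6, 1.7 and 1.10 apply verbatim and show each such region is a compression body, with the $Fr(\X,W)$-pieces and the horizontal $T$-annuli making up its positive boundary and the $Fr(\Y,W)$-pieces its negative boundary. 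The thin surfaces so produced are incompressible, being assembled from pieces of the incompressible surfaces $\G_-$ and $T$, so $\G'$ is a legitimate generalized Heegaard surface and may be amalgamated.

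It remains to estimate genus. Amalgamating $\G'$ gives a Heegaard surface $G'$ of $M'$, and by the amalgamation formula (Remark 1.19) $\chi(G')=\chi(\G'_+)-\chi(\G'_-)$, just as $\G$ amalgamates to $G$ with $\chi(G)=\chi(\G_+)-\chi(\G_-)$. Perfectness gives $Fr(X_F,W)\cong F$ and $Fr(Y_{F'},W)\cong F'$ for every thick piece $F$ and thin piece $F'$ of a component of $\V\cap M_1$ meeting $T_1$, so the only contributions to $\chi(\G_+)-\chi(\G_-)$ not reproduced in $\chi(\G'_+)-\chi(\G'_-)$ come from the components of $\V\cap M_1$ and the pieces of $\G$ and of $\G_-$ lying in $M_1$ that are disjoint from $T_1$. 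Using $\chi(\partial_+P)\le\chi(\partial_-P)$ for a compression body $P$ — equivalently inequality (2) from the proof of Proposition 3.17 — I would show that deleting precisely these pieces cannot decrease $\chi(\G_+)-\chi(\G_-)$, so that
\[
\chi(G')=\chi(\G'_+)-\chi(\G'_-)\ \ge\ \chi(\G_+)-\chi(\G_-)=\chi(G).
\]
Since $\G$ amalgamates to a \emph{minimal} genus Heegaard surface, $g(G)=g(M)$, whence $g(G')\le g(M)$, and since $G'$ is a Heegaard surface of $M'$ we conclude $g(M')\le g(G')\le g(M)$.

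I expect the main obstacle to be the second step: verifying that, after the gluings along $T$, every complementary region of $\G'$ really is a compression body and that these regions fit together into a bona fide generalized Heegaard splitting that can be amalgamated. This is precisely what the cut-and-paste lemmas of Section 1 and the substitute disk sets of Lemma 3.16 were built for, and it is also where the well-configured hypothesis and the minimality of the connective set $\mathbf{A}$ of Definition 3.8 are used. The Euler characteristic bookkeeping of the last step, keeping track of the pieces of $M_1$ discarded in passing to $M'$, is a secondary but still somewhat delicate point.
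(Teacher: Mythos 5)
Your proposal follows the paper's argument quite closely: for each component of $\V\cap M_1$ meeting $T_1$ you build a perfect doppelg\"{a}nger chamber via Proposition 3.17, use Lemma 3.22 to embed all the chambers disjointly in $W$ (the paper implements the ``nesting'' you describe via what it calls flipping isotopies, extending the rel-boundary isotopies of Lemma 3.22 to ambient isotopies of $W$ that fix $\partial W$), then invoke Observation 3.15, Lemma 3.16, and the Section~1 gluing lemmas (Observation 1.2, Propositions 1.6, 1.7) to show $E(\Q,W)\cup_{h\circ\pi}(\V\cap M_2)$ is a union of generalized compression bodies so that $\G'=\Q\cup_{h\circ\pi}(\G\cap M_2)$ is a generalized Heegaard surface of $M'$, and finally use perfectness plus the amalgamation formula to compare genera. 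The one case you omit is $T\cap\G=\emptyset$, under which there are no doppelg\"{a}nger spoke graphs to assemble; the paper disposes of it separately by observing that $T$ must then be parallel to a thin surface of $\G$, giving the stronger conclusion $g(M_1)+g(M_2)=g(M)$. Two smaller inaccuracies worth fixing: $E(\Q,W)$ is exactly the disjoint union of the deformed chambers, with no extra collar of $\partial W$ left over; and incompressibility of the new thin surfaces in $M'$ is not actually needed to amalgamate $\G'$ (amalgamation applies to any generalized Heegaard splitting), so that sentence can be dropped.
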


\begin{proof}

Our strategy is to construct a surface $\Q\subset W$ so that $\Q\cup_{h\circ\pi}(\G\cap M_2)$ (see Convention 3.9) forms a generalized Heegaard surface of $M'$, one which amalgamates to a Heegaard surface of at most the same genus as the Heegaard surface that $\G$ amalgamates to in $M$.  This will yield the desired result.

Our assumption that $T$ and $\G$ are well-configured does not eliminate the possibility that $T\cap\G=\emptyset$.  But in this case $T$ will be parallel to a component of $\G_-$, the thin part of $\G$, which implies the stronger conclusion $g(M_1)+g(M_2)=g(M)$.  So we assume $T\cap \G\neq \emptyset$.  

Let $V$ be a component of $\V\cap M_1$ which meets $T_1$, and let $\Q^V$ be its doppelg\"{a}nger surface.  By Lemma 3.22, for each component $V'$ which is adjacent to $V$, there is exactly one component $Q^V$ of $\Q^V$ which is isotopic to a component $Q^{V'}$ of $\Q^{V'}$.  Since this isotopy fixes $\partial Q^V$, it may be extended to an ambient isotopy of $W$ which fixes $\partial W$, and this ambient isotopy will push the remaining components of $\Q^V$ and the doppelg\"{a}nger chamber $U$ of $V$, into the complement of the doppelg\"{a}nger chamber $U'$ of $V'$ in $W$, thereby allowing $U'$ and the remaining components of $\Q^{V'}$ to be embedded in $W$ without meeting the deformed copies of $\Q^V$ and $U$.  Call this ambient isotopy a {\em flipping isotopy}.  Since any flipping isotopy leaves $\partial W$ fixed, every disk embedded in the original version of the doppelg\"{a}nger $U$ of $V$ is deformed to a disk which meets $h(\A^V)$ in the same way  as the original.  Thus the deformed copy of $U$ still satisfies Observation 3.15 and Lemma 3.16.

Thus, by performing and reversing flipping isotopies across the components of $\Q^V$, we may simultaneously embed deformed, but topologically equivalent, versions of the doppelg\"{a}nger surfaces and chambers of every component of $\V\cap M_1$ that is adjacent to $V$.  This process of embedding can then be repeated for each of the components of $\V\cap M_1$ that are adjacent to $V$, and then to the components of $\V\cap M_1$ which are adjacent to the components of $\V\cap M_1$ that are adjacent to $V$, and so on.  This process will eventually terminate with the desired surface $\Q$, and $E(\Q,W)$ will be the disjoint union of (harmlessly deformed versions of) the doppelg\"{a}nger chambers of those components of $\V\cap M_1$ which meet $T_1$.

Observation 3.15 then implies that $E(\Q,W)\cup (\V\cap M_2)$ is a generalized compression body.  Moreover, if $\E$ is the union of all the (deformed versions of) the disk sets $\E^U$ defined in Lemma 3.16, then Lemma 3.16 implies that $\E\cup (\D\cap M_2)$ admits an ordering which makes it a primitive disk set for the ordered union of annuli $h(\A_h^1)\cup \A_h^2$ (see Definition 1.4 and Convention 3.5) with respect to the map $h\circ \pi$ (see Convention 3.9).  Thus, if we let $\V_1'=E(\Q,W)\cup (\V\cap M_2)$, and $f=h\circ\pi|_{\A_h^2}$ (here $\A_h^2=\A_h\cap M_2$ as in Observation 3.14), then by Proposition 1.6, $\V_2'=\V_1'/f$ is also a generalized compression body.  Moreover, $f'=h\circ\pi |_{\A_s^2}$ (here $\A_s^2=\A_s\cap M_2$) satisfies the requirements of Observation 1.2, hence $\V'=\V_2'/f'$ is also a generalized compression body.  But if $\G'=\Q\cup_{h\circ\pi}(\G\cap M_2)$, then $\V'=E(\G',M')$, so in fact $\G'$ is a generalized Heegaard splitting of $M'$.  Moreover, since the doppelg\"{a}nger surface $\Q$ was constructed from {\em perfect} spoke graphs as per Definition 3.18, the components of $\G'$ will all have exactly the same genus as the corresponding components of $\G$.  It follows that $\G'$ will amalgamate to a Heegaard surface of genus no higher than the surface which results from amalgamating $\G$.

\end{proof}

\begin{rmk}

For the sake of clarity is worth remarking that in the proof above we cannot necessarily conclude that $\G'$ amalgamates to a surface of the {\em same} genus as the one which $\G$ amalgamates to, because $\G'$ will have no components which correspond to any components of $\G$ which lie entirely inside of $M_1$.  In the event that such components of $\G$ do exist, it is easy to verify that $\G'$ will amalgamate to a surface of strictly lower genus.  It is also perhaps worth pointing out that, since the core $c$ of $W$ can be embedded in $\Q\subset \G'$, we can stabilize $\G'$ once (if necessary) to obtain a generalized Heegaard splitting of $E(c,M')\cong M_2$ of genus at most $g(M)+1$.  This allows us to deduce the following corollary.

\end{rmk}

\begin{cor}

If $\G$ amalgamates to a minimal genus Heegaard surface of $M$, then $g(M_2)\leq g(M)+1$.

\end{cor}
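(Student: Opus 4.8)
The plan is to derive the corollary from Theorem 3.23 by arranging that the core $c$ of $W$ lies on the generalized Heegaard surface of $M'$ constructed there, and then drilling $c$ out. First I would dispose of the degenerate case: if $T\cap\G=\emptyset$, then, as noted in the proof of Theorem 3.23, $T$ is parallel to a component of $\G_-$ and $g(M_1)+g(M_2)=g(M)$, so $g(M_2)\leq g(M)\leq g(M)+1$ already. So assume $T\cap\G\neq\emptyset$, and run the construction in the proof of Theorem 3.23 to produce a generalized Heegaard surface $\G'=\Q\cup_{h\circ\pi}(\G\cap M_2)$ of $M'$ which amalgamates to a Heegaard surface of $M'$ of genus at most $g(M)$.

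Next I would put $c$ on a thick surface of $\G'$. Choose a component $V$ of $\V\cap M_1$ meeting $T_1$; one exists since $T\cap\G\neq\emptyset$. By Observation 3.10, $\partial_+V$ meets $T_1$, so the annulus set $\B_+^V$ of part (1) of Definition 3.14 is nonempty; hence the doppelg\"{a}nger spoke graph $\X\cup\Y$ of $V$ contains a component $X$ of $\X$ possessing a longitudinal edge $l=S^1\times\{x\}$. The surface $Fr(X,W)$ contains a simple closed curve parallel to $l$, and this curve is isotopic in $W$ to $c$ (both are longitudes of the solid torus $W$). By Observation 3.15, $Fr(X,W)\subseteq Fr(\X,W)\subseteq\partial_+\!\bigl(E(\X\cup\Y,W)\bigr)$. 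The isotopies forming $\Q^V$ from $Fr(\X\cup\Y,W)$ in Definition 3.18, together with the flipping isotopies in the proof of Theorem 3.23, are ambient isotopies of $W$ carrying positive boundary to positive boundary; so after them a curve isotopic to $c$ lies on a thick surface $G_0$ of $\G'$. As this curve is ambient isotopic to $c$ in $M'$, I would replace $c$ by it and assume $c\subset G_0$.

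Now I would drill. Let $M_0'\subseteq M'$ be the piece of the generalized splitting which $G_0$ cuts in two, so that $G_0$ is a Heegaard surface of $M_0'$ and $c\subset\mathrm{int}\,M_0'$. Here I would invoke the standard fact that a knot isotopic onto a genus-$g$ Heegaard surface of a compact $3$-manifold $N$ has $g\bigl(E(K,N)\bigr)\leq g+1$: push $K$ off the surface to one side, stabilize the surface once if necessary along a subarc of $K$ so that $K$ becomes a core of a handle, then drill. Applying this with $N=M_0'$ and $K=c$, and leaving every other piece of $\G'$ untouched, yields a generalized Heegaard surface of $E(c,M')$ whose thin surfaces are exactly those of $\G'$ and whose thick surfaces are those of $\G'$ with $G_0$ replaced by a Heegaard surface of $E(c,M_0')$ of genus at most $g(G_0)+1$; hence it amalgamates to a Heegaard surface of genus at most $g(M)+1$. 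Since deleting $c$ from $W$ leaves $T^2\times I$, glued to $M_2$ along $T_2$ as a collar, $E(c,M')\cong M_2$, and therefore $g(M_2)\leq g(M)+1$.

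I expect the main obstacle to be the second step: confirming that $c$ can be isotoped onto a \emph{thick} surface of $\G'$, which amounts to tracking a single longitudinal edge of a spoke graph built from $\B_+^V$ through the various isotopies in the doppelg\"{a}nger construction and verifying that it survives, up to isotopy, on the positive boundary of the corresponding doppelg\"{a}nger chamber. It is the thick-surface placement that keeps the drilling confined to one piece of the generalized splitting, leaving the incompressible thin surfaces --- and hence the genus bookkeeping of the amalgamation --- undisturbed. The ``standard drilling fact'' should be stated with care but is routine.
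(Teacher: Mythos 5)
Your proposal is correct and follows essentially the same approach as the paper, which disposes of the corollary in a single sentence of Remark 3.24: the core $c$ of $W$ lies on $\Q\subset\G'$, and drilling it out at the cost of at most one stabilization gives a generalized Heegaard splitting of $E(c,M')\cong M_2$ of genus at most $g(M)+1$. You have sensibly filled in the details the remark leaves implicit --- isolating $c$ on a \emph{thick} surface component (via a longitude of a component $X$ of $\X$, which exists since $\B_+^V\neq\emptyset$ by Observation 3.10), noting that the ambient isotopies of the construction preserve that placement, and handling the degenerate case $T\cap\G=\emptyset$ separately --- but the route is the one the paper intends.
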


\section{The Main Result}

\begin{thm}

If $K_1$ and $K_2$ are knots in $S^3$, then $t(K_1\# K_2)\geq \max\{t(K_1),t(K_2)\}$.

\end{thm}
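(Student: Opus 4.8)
\emph{Proof proposal.} The plan is to apply Theorem 3.23 to the exterior of $K_1\#K_2$, cut along the companion torus of $K_1$. Since the connected sum is symmetric, it suffices to prove $t(K_1\#K_2)\geq t(K_2)$; and if either $K_i$ is trivial then $K_1\#K_2$ is isotopic to the other summand and there is nothing to prove, so assume both $K_1$ and $K_2$ are non-trivial. Recalling $g(\cdot)=t(\cdot)+1$, the goal becomes $g(E(K_1\#K_2,S^3))\geq g(E(K_2,S^3))$.

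Next I would fix the satellite picture. Write $J=K_1\#K_2$, realized as a satellite with companion $K_1$ and winding-number-one pattern $K_2$, so $J\subset V_1=N(K_1,S^3)$ and there is an unknotted re-embedding $h_0\colon V_1\hookrightarrow S^3$ with $h_0(J)=K_2$. Put $M=E(J,S^3)$ and let $T=Fr(V_1,S^3)$ be the companion torus; since $K_1$ and $K_2$ are non-trivial, $T$ is incompressible and not boundary-parallel, hence essential, and it cuts $M$ into $M_1=E(K_1,S^3)$ and $M_2=E(J,V_1)$ (the pattern space). The point of the construction is that $E(K_2,S^3)$ is recovered from $M_2$ by gluing the unknotted (hence solid-torus) exterior $W'=E(h_0(V_1),S^3)$ along $h_0(T)$ --- equivalently, by Dehn filling $\partial V_1$ along the Seifert longitude of $K_1$ --- which is exactly the operation formalized in Convention 3.9, so $W$ will play the role of $W'$.

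Then I would produce the generalized surface and position it. Let $\Gamma$ be a minimal-genus Heegaard surface of $M$; untelescope it using Proposition 1.19 to a fully untelescoped generalized Heegaard surface $\G$ that amalgamates back to $\Gamma$, so that $\G$ amalgamates to a minimal-genus Heegaard surface of $M$ (the hypothesis of Theorem 3.23) and its thin surfaces are incompressible. By Proposition 1.21, isotope $\G$ so that it meets $T$ only in curves essential in $\G$ and $T$, and then argue that $\G$ and $T$ can be put into the well-configured position of Definition 3.2 relative to $M_1=E(K_1,S^3)$: condition (1) is the output of Proposition 1.21; condition (3) is achieved by the standard maneuver of discarding trivial intersection curves (using irreducibility of $M$) and then boundary-compressing the resulting intersection surfaces into spanning and horizontal annuli (using incompressibility and boundary-irreducibility of the pieces of $E(\G,M)$); and condition (2), that every component of $\G\cap M_1$ separates $M_1$, I would deduce from $H_1(E(K_1,S^3);\mathbb{Z})\cong\mathbb{Z}$, since the boundary of any component of $\G\cap M_1$ is a union of parallel copies of the common intersection slope whose classes must cancel in $H_1(E(K_1,S^3))$, forcing that component to be $\mathbb{Z}/2$-null-homologous and hence separating. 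Finally, I must check that the common intersection slope on $T$ is a meridian of $K_1$ (equivalently, that its dual slope may be taken to be the Seifert longitude of $K_1$), so that the gluing of $W$ prescribed by Convention 3.9 reproduces $E(K_2,S^3)$ rather than a different Dehn filling of $M_2$.

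With the hypotheses in place, Theorem 3.23 applies to $M$, $\G$, $T$ and to $W$ glued to $M_2$ so that $M'=W\cup_{h\circ\pi}M_2\cong E(K_2,S^3)$, giving $g(E(K_2,S^3))=g(M')\leq g(M)=g(E(K_1\#K_2,S^3))$; subtracting $1$ yields $t(K_2)\leq t(K_1\#K_2)$, and interchanging the roles of $K_1$ and $K_2$ gives $t(K_1)\leq t(K_1\#K_2)$, hence $t(K_1\#K_2)\geq\max\{t(K_1),t(K_2)\}$. The step I expect to be the main obstacle is establishing the well-configured hypothesis, and within it the slope analysis: one must show that the essential intersection of $\G$ with the companion torus can be arranged to have meridional slope, ruling out (or isotoping away) other slopes, since only in that case is the surgered manifold $M'$ actually $E(K_2,S^3)$. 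The separation condition (2), the annulus condition (3), and the translation between Heegaard genus and tunnel number are comparatively routine.
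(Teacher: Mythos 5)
Your overall strategy matches the paper's: cut $E(K_1\#K_2)$ along the companion (swallow--follow) torus $T$, put a fully untelescoped minimal surface $\G$ into well-configured position, and invoke Theorem 3.23 with $W$ glued to the pattern space $M_2$ so that $M'\cong E(K_2,S^3)$. But you have left unresolved exactly the step you flag as the ``main obstacle,'' and it is a genuine gap, not a routine check: applying Proposition 1.21 directly to the torus $T$ only guarantees that $\G\cap T$ consists of essential curves of \emph{some} common slope, and there is no a priori reason for that slope to be the meridian of $K_1$. Both of your downstream arguments quietly depend on meridionality: your homological justification of condition (2) of Definition 3.2 (``classes must cancel in $H_1(E(K_1))$'' forces separation only because the meridian generates $H_1$, so the signed boundary count vanishes and injectivity of $\partial\colon H_2(E(K_1),T)\to H_1(T)$ kills the relative class; if the slope were, say, the Seifert longitude, $\G\cap M_1$ could be non-separating), and of course the identification $M'\cong E(K_2,S^3)$ requires the filling slope dual to $\G\cap T$ to be the one yielding the trivial Dehn filling. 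So without pinning down the slope, the proof does not close.

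The paper's resolution is to \emph{not} position $\G$ against $T$ directly. Instead, observe that $T$ is isotopic to $A\cup B$, where $A$ is the decomposing annulus of the connect sum (properly embedded, with $\partial A\subset\partial M$ meridional) and $B$ is the sub-annulus of $\partial M$ on the $E(K_1)$ side. Proposition 1.21 is applied to $A$: then $\G\cap A$ consists of curves essential in the annulus $A$, hence parallel to $\partial A$, hence meridians of $K_1$. Pushing $T$ into a small neighborhood of $A\cup B$ makes $\G\cap T=\G\cap A$, so the slope is forced to be meridional, condition (2) follows immediately (a non-separating component of $\G\cap E(K_1)$ with meridional boundary could be capped off with meridian disks to produce a non-separating closed surface in $S^3$, which is impossible), and on the $M_2\cong E(K_2\cup\mu)$ side the slope is the standard longitude of $N(\mu)$, so the trivial filling recovering $E(K_2,S^3)$ meets each curve of $T\cap\G$ once, as Convention 3.9 requires. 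This one maneuver---working against the decomposing annulus rather than the torus---is the idea your proposal is missing.
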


\begin{proof}

The proposition is trivial if one of $K_1$ or $K_2$ is the unknot, so suppose that both are non-trivial knots.  Assume also that $\max\{t(K_1),t(K_2)\}=t(K_2)$.  Let $T$ be the ``swallow-follow'' torus in $E(K_1\# K_2)$ which swallows the $K_2$ summand and follows the $K_1$ summand (see Figure 6).  We apply Theorem 3.23 by setting $M=E(K_1\# K_2)$, noting that one component of $E(T,M)$ is homeomorphic to $E(K_1)$, which will correspond to $M_1$.  What needs to be shown is that the untelescoped minimal splitting $\G$ can be isotoped so that it meets $T$ only in essential simple closed curves, and such that each component of $\G\cap E(K_1)$ is separating.

By its definition as a swallow-follow torus, $T$ is isotopic to $A\cup B$, where $A$ is the decomposing annulus of the connected sum in $M=E(K_1\# K_2)$, and $B$ the sub-annulus of $\partial M-A$ which lies in the component of $M-A$ corresponding to $E(K_1)$.  By Proposition 1.21, $\G$ can be isotoped to intersect $A$ only in essential simple closed curves, and since each boundary component of $\G\cap E(K_1)$ is then a standard meridional curve of $\partial E(K_1)$, every component of $\G\cap E(K_1)$ is separating in $E(K_1)$ (otherwise we could obtain a non-separating surface in $S^3$).  The hypotheses of Theorem 3.23 (which assume Conventions 3.1, 3.3, and 3.9) can then be satisfied by isotoping $T$ sufficiently close to $A\cup B$.

Now $M_2$ is the component of $E(T,M)$ which is {\em not} homeomorphic to $E(K_1)$, but is instead homeomorphic to $E(L)$, where $L$ is the link in $S^3$ which has $K_2$ as one component, and a meridian $\mu$ of $K_2$ as its other component, and $T=\partial N(\mu)$ under this correspondence (see Figure 6).  Furthermore, the slope in which $\G$ has been made to intersect $T=\partial N(\mu )$ is the standard longitudinal slope determined by the meridian disk $\Delta\subset S^3$ with $\partial \Delta =\mu$ and $|\Delta\cap K_2|=1$.  Thus the slope of the trivial Dehn filling of $\partial N(\mu )=T$ which yields $E(K_2)$ meets each component of $T\cap \G$ exactly once, and Theorem 3.23 applies to $M'=E(K_2)$, yielding $$t(K_1\# K_2)=g(E(K_1\# K_2))-1\geq g(E(K_2))-1=t(K_2).$$

\end{proof}

\begin{figure}
    \centering
    \includegraphics[width=0.95\textwidth]{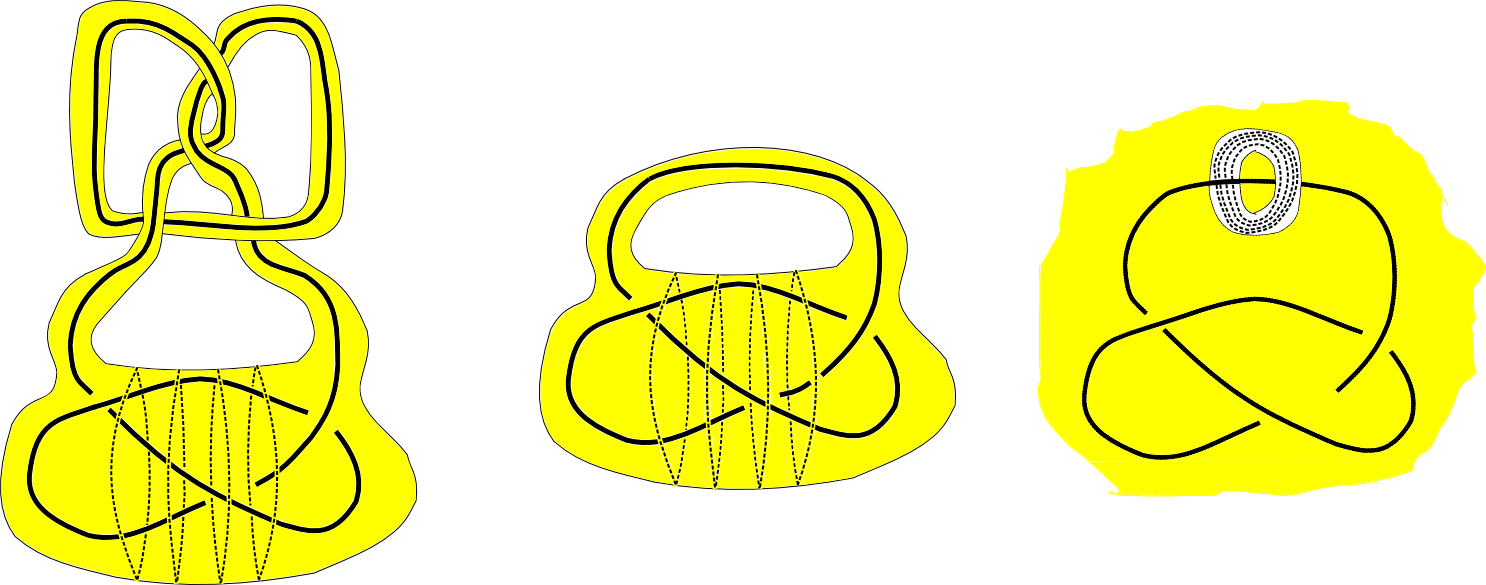}
    \caption{In the first diagram, $M_2$ (in yellow) is seen as situated in $E(K_1\#K_2,S^3)$, and $\G\cap T$ is indicated with dashed lines lying on the ``swallow-follow'' torus $T$.  In the second diagram, $M_2$ is reimbedded in $E(K_2,S^3)$, and in the final diagram we see $M_2$ and $\G\cap T$ as they look after inverting the image of $T$ under this reimbedding.}
\end{figure}

This proof also works if the knots $K_1$ and $K_2$ are embedded in homology spheres (or any pair of compact $3$-manifolds in which every closed embedded surface is separating).  In general, however, it is important to keep in mind the delicacy Theorem 3.23 (and Corollary 3.25), whose assumptions are encoded in Conventions 3.1, 3.3 and 3.9.  In particular, the assumption of Convention 3.3 that $T$ and $\G$ are well-configured cannot always be satisfied, as can be shown using straightforward examples in $S^1\times F$, where $F$ is a closed genus $g>1$ surface.  Thus Corollary 3.25 cannot be applied to prove the following plausible conjecture in the case $g=1$ in any obvious way.

\begin{conj}

Suppose $M$ is a compact $3$-manifold and $T$ is a separating, incompressible, orientable, genus $g$ surface properly embedded in $M$.  If $M_1$ and $M_2$ are the components of $E(T,M)$, then $g(M)\geq \max\{g(M_1),g(M_2)\}-g$.

\end{conj}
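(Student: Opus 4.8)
The plan is to mimic the proof of Theorem 4.1, upgraded so that $T$ may have genus $g>1$. By the symmetry between $M_1$ and $M_2$ it suffices to establish $g(M_2)\leq g(M)+g$ (the same argument with the roles of the two sides interchanged then handles $M_1$). First I would take a minimal genus Heegaard splitting of $M$ and, via Proposition 1.20 and the amalgamation process of Remark 1.19, replace it by a fully untelescoped generalized Heegaard surface $\G$ that amalgamates back to a minimal genus splitting; if the minimal splitting is already strongly irreducible there is nothing to do. Since $T$ is incompressible, Proposition 1.21 lets me isotope $\G$ so that $\G\cap T$ consists of curves essential in both $\G$ and $T$, which is part (1) of being well-configured with respect to $M_1$.

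The substantive new ingredient is a version of Theorem 3.23 in which the solid torus $W$ of Convention 3.9 is replaced by a handlebody of genus $g$. The generalized compression body calculus of Section 1 (Definition 1.1 through Proposition 1.10) is already stated for arbitrary compact surfaces, so it transfers verbatim; what must be rebuilt is Section 2, replacing the spoke graphs in the solid torus by graphs in a genus-$g$ handlebody that play the same role — a central vertex, radial edges, and $g$ independent loop edges, together with a corresponding standard disk set and the attendant notions of stabilization, duality and $\epsilon$-small decomposition — and Section 3, where the doppelg\"{a}nger surface $\Q$ and its chamber are assembled inside this handlebody so that $\Q\cup_{h\circ\pi}(\G\cap M_2)$ is a generalized Heegaard surface of $M':=W\cup_{h\circ\pi}M_2$ amalgamating to a surface of genus at most $g(M)$, i.e.\ $g(M')\leq g(M)$. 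Granting this, $M_2=E(\Gamma,M')$ for a spine $\Gamma$ of $W$ with $b_1(\Gamma)=g$, and by the argument of Remark 3.24 (applied once per independent loop of $\Gamma$) the spine may be isotoped into $\Q\subset\G'$ and then drilled out at a cost of at most $g$ stabilizations, giving $g(M_2)\leq g(M')+g\leq g(M)+g$. Unlike the knot case of Theorem 4.1, there is in general no compatible filling of $T$ with which to avoid the extra $+g$, which is exactly why the conjecture carries the $-g$ term.

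The main obstacle, and the reason the statement is a conjecture, is arranging the remaining parts of ``well-configured'' (Convention 3.3). Condition (2) — that every component of $\G\cap M_1$ separate $M_1$ — was free in the proof of Theorem 4.1 because $M_1=E(K_1)$ lies in $S^3$, which contains no non-separating closed surface; for a general $M_1$, already when $g=1$, it can fail, as the excerpt notes using $S^1\times F$, so one would have to either impose a hypothesis forcing it (for instance that every closed surface in $M$ separates) or extend the doppelg\"{a}nger construction to non-separating pieces of $\G\cap M_1$, which reaches down into Lemmas 3.12 and 3.22 and the connectivity bookkeeping behind Definition 3.8 and Lemma 3.16. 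Condition (3) — that $T\cap V$ be a union of spanning and horizontal annuli — is automatic for a torus, since an incompressible annulus in a compression body is spanning or boundary-parallel, but not for $g>1$, where $T\cap V$ may contain incompressible pieces of higher genus; handling these would require replacing the annulus-based cut-and-paste of Section 1 and the spoke bookkeeping of Sections 2--3 with a genuinely two-sided, higher-genus analogue. I expect that dealing with these two points — especially the non-separating case — is where essentially all of the difficulty lies, the remaining lift of Sections 2--4 from the solid torus to a genus-$g$ handlebody being laborious but essentially mechanical.
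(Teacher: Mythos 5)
This statement is Conjecture 4.2, which the paper presents as an open problem with no proof: the discussion immediately preceding it says explicitly that the well-configuredness hypothesis of Convention 3.3 ``cannot always be satisfied, as can be shown using straightforward examples in $S^1\times F$,'' and that ``Corollary 3.25 cannot be applied to prove the following plausible conjecture in the case $g=1$ in any obvious way.'' Your proposal correctly recognizes this. What you have written is not a proof but a roadmap, and the obstacles you isolate are precisely the ones the paper points to: condition (2) of Convention 3.3 (each component of $\G\cap M_1$ separates $M_1$) can fail already when $g=1$; and condition (3) ($T\cap V$ a union of spanning or horizontal annuli) breaks irreparably for $g>1$, since the pieces into which $\G$ cuts $T$ need not be annuli at all, so the annulus calculus of Propositions 1.6--1.10 and the spoke machinery of Sections 2--3 have no ready analogue. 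Your account of the endgame is also the right generalization: drilling a genus-$g$ spine of $W$ out of $\G'$ at a cost of $g$ stabilizations is the natural extension of Remark 3.24 and Corollary 3.25, and your observation that the $-g$ in the conjecture reflects the absence of a compatible Dehn filling (which in Theorem 4.1 eliminated the $+1$) is accurate.

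That said, the proposal contains no new idea that dents the central difficulty, and you say so yourself; calling the lift of Sections 2--3 ``laborious but essentially mechanical'' is too optimistic, since the non-annular pieces of $T\cap V$ for $g>1$ are not merely more bookkeeping but a structural failure of the decomposition the paper relies on. One smaller caveat: your claim that condition (3) is ``automatic for a torus'' should be tempered. Even for $g=1$ an incompressible annulus of $T\cap V$ could a priori have both boundary circles on $\partial_-V$; in the proof of Theorem 4.1 the paper arranges condition (3) using the specific geometry of the swallow-follow torus near $A\cup B$, not a general position fact about annuli in compression bodies. In short, you have given an accurate explanation of why this statement is a conjecture rather than a theorem, which is consistent with the paper, but neither you nor the paper proves it.
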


Similarly, the need for $T$ and $\G$ to be well-configured is what keeps us from applying Theorem 3.23 and Corollary 3.25 to prove the analogue of Theorem 4.1 for satellite knots.

Theorem 4.1 has some relation to the ``rank-genus conjecture'' for knot complements in $S^3$.  If we define $r(K)$ to be the minimal number of generators for $\pi_1(S^3-K)$, then the rank-genus conjecture states:

\begin{conj}

For all knots $K\subset S^3$, $r(K)=g(E(K,S^3))=t(K)+1$.

\end{conj}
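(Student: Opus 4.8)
This is the rank--genus conjecture for knot complements in $S^3$, which is open; what follows is therefore not a proof but a sketch of the shape a proof would have to take, together with the main obstruction. The equality $g(E(K,S^3))=t(K)+1$ holds by the definition of tunnel number recalled in the introduction, and the inequality $r(K)\leq g(E(K,S^3))$ is routine: in a genus-$g$ Heegaard splitting $(V,W,G)$ of $E(K,S^3)$ one side, say $V$, is a handlebody of genus $g$, and $E(K,S^3)$ is obtained from $V$ by attaching $2$- and $3$-handles, so $\pi_1(E(K,S^3))$ is a quotient of $\pi_1(V)$ and a spine of $V$ provides $g$ generators. Hence the entire content is the reverse inequality $r(K)\geq g(E(K,S^3))$: every $n$-element generating set of the knot group must be \emph{geometric}, i.e. must arise from a genus-$n$ Heegaard surface of $E(K,S^3)$.

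The natural plan is an induction over the JSJ decomposition of $E(K,S^3)$. One would first handle the atoroidal pieces: a Seifert-fibered knot complement in $S^3$ is a torus-knot complement, where $\pi_1=\langle a,b\mid a^p=b^q\rangle$ has rank $2=g$ outright, and for a small knot one would try to upgrade an $n$-generator presentation directly into a genus-$n$ Heegaard splitting, using the absence of closed essential surfaces (the circle of thin-position ideas behind Proposition 1.21). One would then need a gluing step: cutting $E(K,S^3)$ along its JSJ tori into pieces $N_1,\dots,N_k$, bound $r(\pi_1 E(K,S^3))$ below in terms of the $r(\pi_1 N_i)$ and the peripheral data, and reassemble the Heegaard surfaces of the pieces by amalgamation (Remark 1.19). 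For a satellite knot a companion torus plays exactly the role of the torus $T$ in Convention 3.1, and Corollary 3.25 is precisely the kind of genus control one would feed into such an induction. As evidence that the conjecture and the technology are compatible, note that the retraction $\pi_1 E(K_1\# K_2,S^3)=\pi_1 E(K_1,S^3)\ast_{\mathbb{Z}}\pi_1 E(K_2,S^3)\to\pi_1 E(K_1,S^3)$ which is the identity on the first factor and sends the second factor to the common meridian $\mathbb{Z}=\langle\mu\rangle$ by abelianization shows $r(K_1\# K_2)\geq\max\{r(K_1),r(K_2)\}$; so Theorem 4.1 is exactly the Heegaard-genus shadow of the conjecture.

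The decisive obstacle is the hyperbolic case, and it is genuine: Tao Li has constructed closed hyperbolic $3$-manifolds whose rank is strictly less than their Heegaard genus, so nothing soft forbids the same phenomenon for a hyperbolic knot complement in $S^3$. A proof must therefore exploit features special to knots in $S^3$ --- that $\partial E(K,S^3)$ is a single torus carrying a canonical meridian, that the meridian normally generates $\pi_1$, and that every closed embedded surface in $S^3$ separates (the very hypothesis used in the proof of Theorem 4.1). A plausible line of attack is to arrange a minimal generating set of $\pi_1(E(K,S^3))$ to contain a conjugate of the meridian, ``peel off'' the corresponding handle, and induct on a three-manifold of strictly smaller rank; the step that would have to break new ground is converting this algebraic splitting-off into an honest compression of a Heegaard surface, and there is at present no general mechanism for doing so. For that reason the statement is recorded above only as a conjecture.
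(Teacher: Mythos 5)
You have correctly identified that this statement is the rank--genus conjecture for knot complements, which the paper records only as a conjecture and does not prove; your refusal to claim a proof is the right call. Your surrounding remarks --- that $r(K)\leq g(E(K,S^3))=t(K)+1$ is routine, that the hard direction is open, and that Li's closed hyperbolic counterexamples show the obstruction is genuine --- match the paper's own discussion exactly, so there is nothing to compare beyond noting that no gap exists because no proof is claimed on either side.
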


Since a genus $g$ Heegaard splitting of a knot complement induces a $g$-generator presentation of $\pi_1(S^3-K)$, it is clear that $r(K)\leq t(K)+1$, but it remains unknown whether it is possible for this inequality to be strict.  The rank-genus conjecture for closed $3$-manifolds is known to be false \cite{boileau-zieschang}, even when restricted to the class of hyperbolic $3$-manifolds \cite{li}.  This suggests that the rank-genus conjecture also fails for knot complements, and a pair of knots in $S^3$ whose tunnel number degenerated enough to violate Theorem 4.1 would have given a counterexample, since the following analogue of Theorem 4.1 for rank is trivial (thanks to Richard Weidmann for pointing out the simple line of proof below).

\begin{prop}

$r(K_1\# K_2)\geq \max\{r(K_1),r(K_2)\}$ for any knots $K_1,K_2\subset S^3$.

\end{prop}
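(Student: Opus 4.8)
The plan is to reduce the statement to the elementary fact that a surjective group homomorphism cannot increase the minimal number of generators, by producing surjections $\pi_1(E(K_1\#K_2))\twoheadrightarrow\pi_1(E(K_1))$ and $\pi_1(E(K_1\#K_2))\twoheadrightarrow\pi_1(E(K_2))$. First I would record the Seifert--van Kampen decomposition of the connected-sum complement: the decomposing sphere of $K_1\#K_2$ meets $E(K_1\#K_2)$ in a meridional annulus $A$, and it is standard that cutting along $A$ exhibits $E(K_1\#K_2)=E(K_1)\cup_A E(K_2)$. Hence $\pi_1(E(K_1\#K_2))\cong\pi_1(E(K_1))*_{\mathbb{Z}}\pi_1(E(K_2))$, the free product amalgamated over the infinite cyclic subgroup $\langle\mu\rangle$ generated by a meridian, where $\mu$ is identified in $\pi_1(E(K_i))$ with a meridian $\mu_i$ of $K_i$.

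Next I would build a retraction $\phi\colon\pi_1(E(K_1))*_{\mathbb{Z}}\pi_1(E(K_2))\to\pi_1(E(K_1))$. On the factor $\pi_1(E(K_1))$, let $\phi$ be the identity. On the factor $\pi_1(E(K_2))$, let $\phi$ be the composite $\pi_1(E(K_2))\twoheadrightarrow H_1(E(K_2))\cong\mathbb{Z}\xrightarrow{1\mapsto\mu_1}\pi_1(E(K_1))$. Because the abelianization of a knot group is infinite cyclic generated by the meridian class, this composite sends $\mu_2$ to $\mu_1$; therefore the two prescriptions agree on the amalgamating subgroup $\langle\mu\rangle$ and, by the universal property of the amalgamated free product, glue to a well-defined homomorphism $\phi$, which is onto since it restricts to the identity on $\pi_1(E(K_1))$.

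Finally I would conclude: the image under $\phi$ of any generating set of $\pi_1(E(K_1\#K_2))$ generates $\pi_1(E(K_1))$, so $r(K_1)\le r(K_1\#K_2)$; running the same argument with the roles of $K_1$ and $K_2$ interchanged gives $r(K_2)\le r(K_1\#K_2)$, and the asserted bound follows. There is essentially no obstacle in this argument: the only point that needs a word of justification is that a meridian generates $H_1$ of a knot complement, which is standard, and the mild degenerate case in which one of the knots is trivial is either handled separately or simply absorbed into the same argument. The interest of the statement lies entirely in the contrast with Theorem~4.1, where the analogous inequality for Heegaard genus required the elaborate doppelg\"{a}nger machinery of Sections~1--3.
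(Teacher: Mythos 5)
Your proposal is correct and takes essentially the same route as the paper: the paper's proof is the single line that $\pi_1(E(K_1\#K_2))$ is the amalgamated free product $\pi_1(E(K_1))\ast_{\mathbb{Z}}\pi_1(E(K_2))$ and that this group retracts onto each factor, and your argument simply spells out the standard details (the Seifert--van Kampen decomposition along the swallow annulus, and the explicit retraction via abelianization sending $\mu_2\mapsto\mu_1$) that justify that one-line claim.
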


\begin{proof}

$\pi_1(E(K_1\# K_2))$ is an amalgamated free product $\pi_1(E(K_1))\ast_\mathbb{Z}\pi_1(E(K_2))$ which retracts onto each of its factors.

\end{proof}

The fact that Theorem 4.1 is true indicates that the class of knot pairs which experience high tunnel number degeneration is not a good place to look for counterexamples to the rank-genus conjecture after all.  It might even be seen as a small vote in favor of the possibility that the rank-genus conjecture is valid for knot complements in $S^3$.

\section*{Acknowledgments}

I would like to thank Maggy Tomova and Charlie Frohman for their support and guidance throughout my career.  I would like to thank Jesse Johnson for the same, as well as for many helpful conversations and observations about this paper in particular.

\end{document}